\newcommand{\R}{\mathbb{R}}
\newcommand{\Z}{\mathbb{Z}}
\newcommand{\C}{\mathbb{C}}
\title{The squish map and the $\text{SL}_2$ double dimer model}
\author{Leigh Foster\thanks{\href{mailto:leighf@uoregon.edu}{leighf@uoregon.edu}. Leigh Foster was supported by NSF grant DMS-2039316.} \and Benjamin Young\thanks{\href{mailto:bjy@uoregon.edu}{bjy@uoregon.edu}}}
\definecolor{yellow}{RGB}{253, 231, 37}
\definecolor{lime}{RGB}{94, 201, 98}
\definecolor{green}{RGB}{33, 145, 140}
\definecolor{blue}{RGB}{59, 82, 139}
\definecolor{purple}{RGB}{68, 10, 84}
\newtheorem{theorem}{Theorem}
\newtheorem{conjecture}[theorem]{Conjecture}
\newtheorem{corollary}[theorem]{Corollary}
\newtheorem{remark}[theorem]{Remark}
\theoremstyle{definition}
\newtheorem{example}[theorem]{Example}
\newtheorem{definition}[theorem]{Definition}
\DeclareMathOperator{\SL}{SL}
\begin{document}

\maketitle

\begin{abstract}
A plane partition, whose 3D Young diagram is made of unit cubes, can be approximated by a ``coarser" plane partition, made of cubes of side length 2.  Indeed, there are two such approximations obtained by ``rounding up" or ``rounding down" to the nearest cube.  We relate this coarsening (or downsampling) operation to the squish map introduced by the second author in earlier work.  We exhibit a related measure-preserving map between the dimer model on the honeycomb graph, and the $\SL_2$ double dimer model on a coarser honeycomb graph; we compute the most interesting special case of this map, related to plane partition $q$-enumeration with 2-periodic weights.  As an application, we specialize the weights to be certain roots of unity, obtain novel generating functions (some known, some new, and some conjectural) that $(-1)$-enumerate certain classes of pairs of plane partitions according to how their dimer configurations interact. 
\end{abstract}

\section{Introduction}

Anyone who has played with cubical building blocks in their youth has, at some point, constructed a $2 \times 2 \times 2$ cube out of eight $1 \times 1 \times 1$ cubes.  Later in life, some of us (including the authors) went on to study \emph{plane partitions}, which are nothing more than stable piles of cubes in the corner of a large room.  These two experiences tell us that there ought to be \emph{some} relation between plane partitions made out of the little cubes, and plane partitions made out of the bigger ones.  Or, going the other way: what if we take a picture of a plane partition and ``downsample" it, by approximating its $1 \times 1 \times 1$ cubes by roughly one eighth as many $2 \times 2 \times 2$ cubes, as best we can (see Figure~\ref{fig:downsample scad})?  How much information do we lose by doing this?

\begin{figure}[ht]
$$
\includegraphics[height=1.8in]{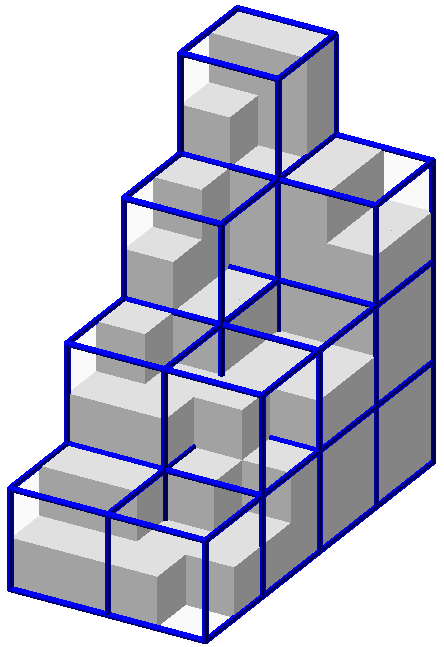}
\qquad
\begin{tikzpicture}[scale = 0.5]
			\draw[very thick] (0, 0) -- (0, 8);
			\draw (1, 0) -- (1, 8);
			\draw[very thick] (2, 0) -- (2, 8);
			\draw (3, 0) -- (3, 8);
			\draw[very thick] (4, 0) -- (4, 8);
			\draw[very thick] (0, 0) -- (4, 0);
			\draw (0, 1) -- (4, 1);
			\draw[very thick] (0, 2) -- (4, 2);
			\draw (0, 3) -- (4, 3);
			\draw[very thick] (0, 4) -- (4, 4);
			\draw (0, 5) -- (4, 5);
			\draw[very thick] (0, 6) -- (4, 6);
			\draw (0, 7) -- (4, 7);
			\draw[very thick] (0, 8) -- (4, 8);
			
			\node at (0.5, 0.5) {1};
			\node at (0.5, 1.5) {2};
			\node at (0.5, 2.5) {3};
			\node at (0.5, 3.5) {4};
			\node at (0.5, 4.5) {5};
			\node at (0.5, 5.5) {6};
			\node at (0.5, 6.5) {7};
			\node at (0.5, 7.5) {8};
			
			\node at (1.5, 0.5) {1};
			\node at (1.5, 1.5) {2};
			\node at (1.5, 2.5) {3};
			\node at (1.5, 3.5) {3};
			\node at (1.5, 4.5) {4};
			\node at (1.5, 5.5) {4};
			\node at (1.5, 6.5) {6};
			\node at (1.5, 7.5) {8};
			
			\node at (2.5, 0.5) {1};
			\node at (2.5, 1.5) {1};
			\node at (2.5, 2.5) {2};
			\node at (2.5, 3.5) {3};
			\node at (2.5, 4.5) {3};
			\node at (2.5, 5.5) {3};
			\node at (2.5, 6.5) {6};
			\node at (2.5, 7.5) {6};
			
			\node at (3.5, 0.5) {0};
			\node at (3.5, 1.5) {1};
			\node at (3.5, 2.5) {1};
			\node at (3.5, 3.5) {2};
			\node at (3.5, 4.5) {3};
			\node at (3.5, 5.5) {3};
			\node at (3.5, 6.5) {5};
			\node at (3.5, 7.5) {5};
			
			\end{tikzpicture}\ \ 
\begin{tikzpicture}[scale = 0.5]
		\node at (-2, 4) {$\longmapsto$};
		\draw[very thick,blue] (0, 0) -- (0, 8);
		\draw[very thick,blue] (2, 0) -- (2, 8);
		\draw[very thick,blue] (4, 0) -- (4, 8);
		\draw[very thick,blue] (0, 0) -- (4, 0);
		\draw[very thick,blue] (0, 2) -- (4, 2);
		\draw[very thick,blue] (0, 4) -- (4, 4);
		\draw[very thick,blue] (0, 6) -- (4, 6);
		\draw[very thick,blue] (0, 8) -- (4, 8);
		
		\node at (1, 1) {\Large 1};
		\node at (1, 3) {\Large 2};
		\node at (1, 5) {\Large 3};
		\node at (1, 7) {\Large 4};
		
		\node at (3, 1) {\Large 1};
		\node at (3, 3) {\Large 2};
		\node at (3, 5) {\Large 2};
		\node at (3, 7) {\Large 3};
		\end{tikzpicture}$$
\caption{A plane partition, viewed as a stack of boxes - and then ``downsampled'' by a factor of two, rounding up to the nearest full box.  The plane partitions are $\pi$ and $\pi_{\text{max}}$, respectively, from Example~\ref{example:downsample} in Section~\ref{subsection:squish map}. }
\label{fig:downsample scad}
\end{figure}

We propose to answer this question through an analysis of the \emph{squish map} -- a map originally studied in \cite{young:squish}, as a means of proving a combinatorial theorem about plane partition enumeration.  Here, we prove that the squish map is a measure-preserving transformation between instances of the single and double dimer models on honeycomb graphs. Thus, the loops of the double dimer model indicate where information is lost in the ``downsampling" process.

\subsection{Definitions and literature survey}
\label{sec:definitions}

A \emph{plane partition} is an infinite matrix of nonnegative integers, all of which are zero sufficiently far from the origin, which are weakly decreasing both in rows or in columns (we do not typically draw the zeros).  Equivalently, interpreting the numbers in a plane partition as $z$ coordinates, one can represent a plane partition as a stack of unit cubes in the corner of a room - this is precisely the relationship between ordinary integer partitions and their Young diagrams, but one dimension higher. 

If the plane partition's cubes fit inside an $x \times y \times z$ box, then it is said to be a \emph{boxed $x \times y \times z$ plane partition}.  MacMahon~\cite{macmahon} proved that the generating function for boxed $x \times y \times z$ plane partitions is
\[
\prod_{i=1}^{x}\prod_{j=1}^{y}\frac{1-q^{i+j+z-1}}{1-q^{i+j-1}}
\]
which, in the limit $x, y, z \rightarrow \infty$ gives the famous generating function for plane partitions,
\[
\prod_{i \geq 1} \left(\frac{1}{1-q^i}\right)^{i}.
\]
Boxed plane partitions are in bijection with perfect matchings on an $x \times y \times z$
hexagon graph, and hence with the dimer model on an certain graph. This result (as far as we know) is folklore, and we don't know a good reference; see Figure~\ref{duploex} for an illustration.  We call the graph in question  
 $H_{x,y,z}$, the $x \times y \times z$ honeycomb graph (see Figure~\ref{single}). Let $D_{x, y, z}$ denote the set of perfect matchings (otherwise known as \emph{dimer configurations}) on $H_{x, y, z}$.  Furthermore, let $DD_{x, y, z}$ be the set of \emph{double dimer configurations} on $H_{x, y, z}$.  That is, an element  $m \in D_{x, y, z}$ is an induced 1-regular subgraph of $H_{x, y, z}$ -- every vertex of $H_{x, y, z}$ is in exactly one edge of $D_{x, y, z}$, whereas an element $m \in DD_{x, y, z}$ is an induced 2-regular subgraph (with the slightly unusual convention that doubled edges are allowed).  See Figure \ref{double}.

We require Kenyon's $\SL_2(\mathbb{C})$-weighted double dimer model~\cite{kenyon:conformal}: in addition to an edge weight, one assigns a $2 \times 2$ matrix $M_e \in \SL_2(\mathbb{C})$ to each edge $e$; the partition function involves a product of traces of such matrices taken around a loop. This collection of matrices is called a \textit{connection} on the graph, by analogy with differential geometry, since its contribution to the partition function is the monodromy around closed paths in the graph.  When all $M_e$ are the identity matrix, all loops contribute $\text{Tr} (I) = 2$, so the model reduces to two independent copies of the ordinary dimer model, but this is typically a much more general and subtle model.  

There are standard, determinantal tools for computing single and double dimer partition functions. For the single dimer model, one uses \cite{kasteleyn:statistics} and \cite{temperley-fisher}; for lattice paths, the references are \cite{lindstrom} and \cite{gessel-viennot}.  Kenyon \cite{kenyon:conformal} introduces a matrix, analogous to Kasteleyn's matrix, whose determinant computes the partition function of this model.  We do not need either determinant for this paper.

The map we introduce here, \textit{the squish map}, is a map from $D_{2x, 2y, 2z}$ to $DD_{x, y, z}$. One can (in principle) discover everything there is to know about it by drawing $H_{2x, 2y, 2z}$ with distorted edge lengths (see Figure \ref{squish}). It was introduced in \cite{young:squish}. Our result here is that the squish map is measure-preserving, for particular choices of the parameters of the single and double dimer models.  Indeed, we are even able to prove a subtle result about the enumeration of down-sampled plane partitions, hinted at in \cite{young:squish}: If we let $x, y, z \rightarrow \infty$, then there is a closed-form generating function which is preserved by the squish map. It is a certain 4-variable generating function for ``$\mathbb{Z}_2 \times \mathbb{Z}_2$ - colored plane partitions" studied in \cite{young:thesis}; in it, a cube at position $(i,j,k)$ gets a color according to the parities of $(i-j)$ and $(i-k)$; the four variables keep track of how many cubes of each color there are.  The squish map allows us to see this generating function as marking certain statistics on a downsampled plane partition. This generating function was studied in \cite{young:thesis} under a certain specialization; we both study it in full generality here, including the original specialization and a few intriguing others.

    \section{The single and double dimer models}

    \begin{definition}
            The \textit{weight} of a graph is an assignment $\nu: E \to \R_{\geq 0}$ of real numbers onto each edge of the graph, where $E$ is the set of edges in a graph $G$. 
    \end{definition} 

    \begin{definition}
            Consider a perfect matching $m$ on $G$. The \textit{weight of m} is defined to be $w(m) = \prod_{e \in m} w(e)$.
            
    \end{definition}
    
    \begin{example}\label{monochromatic}
    	The following weighting reproduces the $Q^{(\text{number of boxes})}$ statistic on plane partitions, up to an overall power of $Q$. This, as far as the authors know, is a ``folklore'' idea for which we do not know a good reference.
    	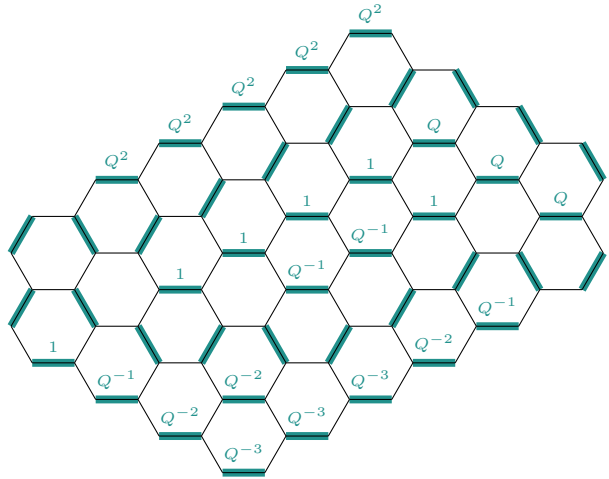
\begin{figure}[ht]
    		\tiny
    		$$\begin{tikzpicture}[scale = 0.8, x=1pt,y=1pt]
    			\draw[line width = 3 pt, green] (100.0, 411.2435565298214) -- node[left]  {}(90.0, 393.92304845413264);
    			\draw[line width = 3 pt, green] (100.0, 376.6025403784438) -- node[left]  {}(90.0, 359.28203230275506);
    			\draw[line width = 3 pt, green] (180.0, 307.3205080756888) -- node[above] {$Q^{-2}$}(160.0, 307.3205080756888);
    			\draw[line width = 3 pt, green] (190.0, 290.0)            -- node[above]  {$Q^{-3}$}(210.0, 290.0);
    			\draw[line width = 3 pt, green] (130.0, 324.6410161513776) -- node[above] {$Q^{-1}$}(150.0, 324.6410161513776);
    			\draw[line width = 3 pt, green] (240.0, 307.3205080756888) -- node[above] {$Q^{-3}$}(220.0, 307.3205080756888);
    			\draw[line width = 3 pt, green] (190.0, 324.6410161513776) -- node[above] {$Q^{-2}$}(210.0, 324.6410161513776);
    			\draw[line width = 3 pt, green] (250.0, 324.6410161513776) -- node[above] {$Q^{-3}$}(270.0, 324.6410161513776);
    			\draw[line width = 3 pt, green] (280.0, 341.9615242270663) -- node[above] {$Q^{-2}$}(300.0, 341.9615242270663);
    			\draw[line width = 3 pt, green] (330.0, 359.28203230275506) -- node[above] {$Q^{-1}$}(310.0, 359.28203230275506);
    			\draw[line width = 3 pt, green] (270.0, 497.8460969082653) -- node[above] {$Q^2$}(250.0, 497.8460969082653);
    			\draw[line width = 3 pt, green] (370.0, 428.5640646055101) -- node[left]  {}(360.0, 445.88457268119896);
    			\draw[line width = 3 pt, green] (220.0, 480.52558883257643) -- node[above] {$Q^2$}(240.0, 480.52558883257643);
    			\draw[line width = 3 pt, green] (330.0, 428.5640646055101) -- node[above] {$Q$}(310.0, 428.5640646055101);
    			\draw[line width = 3 pt, green] (330.0, 463.2050807568877) -- node[left]  {}(340.0, 445.88457268119896);
    			\draw[line width = 3 pt, green] (240.0, 411.2435565298214) -- node[above] {$1$}(220.0, 411.2435565298214);
    			\draw[line width = 3 pt, green] (300.0, 411.2435565298214) -- node[above] {$1$}(280.0, 411.2435565298214);
    			\draw[line width = 3 pt, green] (300.0, 376.6025403784438) -- node[left]  {}(310.0, 393.92304845413264);
    			\draw[line width = 3 pt, green] (210.0, 428.5640646055101) -- node[left]  {}(220.0, 445.88457268119896);
    			\draw[line width = 3 pt, green] (210.0, 463.2050807568877) -- node[above] {$Q^2$}(190.0, 463.2050807568877);
    			\draw[line width = 3 pt, green] (130.0, 359.28203230275506) -- node[left] {}(120.0, 376.6025403784438);
    			\draw[line width = 3 pt, green] (190.0, 393.92304845413264) -- node[above] {1}(210.0, 393.92304845413264);
    			\draw[line width = 3 pt, green] (180.0, 445.88457268119896) -- node[above] {$Q^2$}(160.0, 445.88457268119896);
    			\draw[line width = 3 pt, green] (180.0, 411.2435565298214) -- node[left]  {}(190.0, 428.5640646055101);
    			\draw[line width = 3 pt, green] (270.0, 359.28203230275506) -- node[left] {}(280.0, 376.6025403784438);
    			\draw[line width = 3 pt, green] (250.0, 393.92304845413264) -- node[above] {$Q^{-1}$}(270.0, 393.92304845413264);
    			\draw[line width = 3 pt, green] (120.0, 411.2435565298214) -- node[left]  {}(130.0, 393.92304845413264);
    			\draw[line width = 3 pt, green] (160.0, 411.2435565298214) -- node[left]  {}(150.0, 393.92304845413264);
    			\draw[line width = 3 pt, green] (150.0, 428.5640646055101) -- node[above] {$Q^2$}(130.0, 428.5640646055101);
    			\draw[line width = 3 pt, green] (250.0, 463.2050807568877) -- node[left]  {}(240.0, 445.88457268119896);
    			\draw[line width = 3 pt, green] (270.0, 428.5640646055101) -- node[above] {1}(250.0, 428.5640646055101);
    			\draw[line width = 3 pt, green] (300.0, 445.88457268119896) -- node[above] {$Q$}(280.0, 445.88457268119896);
    			\draw[line width = 3 pt, green] (280.0, 480.52558883257643) -- node[left] {}(270.0, 463.2050807568877);
    			\draw[line width = 3 pt, green] (310.0, 463.2050807568877) -- node[left]  {}(300.0, 480.52558883257643);
    			\draw[line width = 3 pt, green] (220.0, 341.9615242270663) -- node[left]  {}(210.0, 359.28203230275506);
    			\draw[line width = 3 pt, green] (240.0, 376.6025403784438) -- node[above] {$Q^{-1}$}(220.0, 376.6025403784438);
    			\draw[line width = 3 pt, green] (240.0, 341.9615242270663) -- node[left]  {}(250.0, 359.28203230275506);
    			\draw[line width = 3 pt, green] (160.0, 341.9615242270663) -- node[left]  {}(150.0, 359.28203230275506);
    			\draw[line width = 3 pt, green] (180.0, 376.6025403784438) -- node[above] {$1$}(160.0, 376.6025403784438);
    			\draw[line width = 3 pt, green] (190.0, 359.28203230275506) -- node[left] {}(180.0, 341.9615242270663);
    			\draw[line width = 3 pt, green] (330.0, 393.92304845413264) -- node[left] {}(340.0, 376.6025403784438);
    			\draw[line width = 3 pt, green] (360.0, 411.2435565298214) -- node[above] {$Q$}(340.0, 411.2435565298214);
    			\draw[line width = 3 pt, green] (360.0, 376.6025403784438) -- node[left]  {}(370.0, 393.92304845413264);
    			\draw[line width = 3 pt, green] (100.0, 341.9615242270663) -- node[above] {$1$}(120.0, 341.9615242270663);
    			\draw (100.0, 341.9615242270663) -- (120.0, 341.9615242270663);
    			\draw (100.0, 341.9615242270663) -- (90.0, 359.28203230275506);
    			\draw (100.0, 376.6025403784438) -- (90.0, 359.28203230275506);
    			\draw (100.0, 376.6025403784438) -- (120.0, 376.6025403784438);
    			\draw (100.0, 376.6025403784438) -- (90.0, 393.92304845413264);
    			\draw (90.0, 393.92304845413264) -- (100.0, 411.2435565298214);
    			\draw (120.0, 411.2435565298214) -- (100.0, 411.2435565298214);
    			\draw (130.0, 324.6410161513776) -- (120.0, 341.9615242270663);
    			\draw (120.0, 341.9615242270663) -- (130.0, 359.28203230275506);
    			\draw (130.0, 359.28203230275506) -- (120.0, 376.6025403784438);
    			\draw (130.0, 393.92304845413264) -- (120.0, 376.6025403784438);
    			\draw (120.0, 411.2435565298214) -- (130.0, 393.92304845413264);
    			\draw (120.0, 411.2435565298214) -- (130.0, 428.5640646055101);
    			\draw (180.0, 307.3205080756888) -- (160.0, 307.3205080756888);
    			\draw (150.0, 324.6410161513776) -- (160.0, 307.3205080756888);
    			\draw (130.0, 324.6410161513776) -- (150.0, 324.6410161513776);
    			\draw (160.0, 341.9615242270663) -- (150.0, 324.6410161513776);
    			\draw (160.0, 341.9615242270663) -- (180.0, 341.9615242270663);
    			\draw (160.0, 341.9615242270663) -- (150.0, 359.28203230275506);
    			\draw (130.0, 359.28203230275506) -- (150.0, 359.28203230275506);
    			\draw (160.0, 376.6025403784438) -- (150.0, 359.28203230275506);
    			\draw (180.0, 376.6025403784438) -- (160.0, 376.6025403784438);
    			\draw (160.0, 376.6025403784438) -- (150.0, 393.92304845413264);
    			\draw (130.0, 393.92304845413264) -- (150.0, 393.92304845413264);
    			\draw (160.0, 411.2435565298214) -- (150.0, 393.92304845413264);
    			\draw (180.0, 411.2435565298214) -- (160.0, 411.2435565298214);
    			\draw (150.0, 428.5640646055101) -- (160.0, 411.2435565298214);
    			\draw (150.0, 428.5640646055101) -- (130.0, 428.5640646055101);
    			\draw (150.0, 428.5640646055101) -- (160.0, 445.88457268119896);
    			\draw (180.0, 445.88457268119896) -- (160.0, 445.88457268119896);
    			\draw (180.0, 307.3205080756888) -- (190.0, 290.0);
    			\draw (180.0, 307.3205080756888) -- (190.0, 324.6410161513776);
    			\draw (190.0, 324.6410161513776) -- (180.0, 341.9615242270663);
    			\draw (190.0, 359.28203230275506) -- (180.0, 341.9615242270663);
    			\draw (180.0, 376.6025403784438) -- (190.0, 359.28203230275506);
    			\draw (180.0, 376.6025403784438) -- (190.0, 393.92304845413264);
    			\draw (180.0, 411.2435565298214) -- (190.0, 393.92304845413264);
    			\draw (180.0, 411.2435565298214) -- (190.0, 428.5640646055101);
    			\draw (180.0, 445.88457268119896) -- (190.0, 428.5640646055101);
    			\draw (180.0, 445.88457268119896) -- (190.0, 463.2050807568877);
    			\draw (190.0, 290.0) -- (210.0, 290.0);
    			\draw (210.0, 290.0) -- (220.0, 307.3205080756888);
    			\draw (240.0, 307.3205080756888) -- (220.0, 307.3205080756888);
    			\draw (210.0, 324.6410161513776) -- (220.0, 307.3205080756888);
    			\draw (190.0, 324.6410161513776) -- (210.0, 324.6410161513776);
    			\draw (220.0, 341.9615242270663) -- (210.0, 324.6410161513776);
    			\draw (220.0, 341.9615242270663) -- (240.0, 341.9615242270663);
    			\draw (220.0, 341.9615242270663) -- (210.0, 359.28203230275506);
    			\draw (190.0, 359.28203230275506) -- (210.0, 359.28203230275506);
    			\draw (220.0, 376.6025403784438) -- (210.0, 359.28203230275506);
    			\draw (240.0, 376.6025403784438) -- (220.0, 376.6025403784438);
    			\draw (220.0, 376.6025403784438) -- (210.0, 393.92304845413264);
    			\draw (190.0, 393.92304845413264) -- (210.0, 393.92304845413264);
    			\draw (210.0, 393.92304845413264) -- (220.0, 411.2435565298214);
    			\draw (240.0, 411.2435565298214) -- (220.0, 411.2435565298214);
    			\draw (210.0, 428.5640646055101) -- (220.0, 411.2435565298214);
    			\draw (210.0, 428.5640646055101) -- (190.0, 428.5640646055101);
    			\draw (210.0, 428.5640646055101) -- (220.0, 445.88457268119896);
    			\draw (220.0, 445.88457268119896) -- (240.0, 445.88457268119896);
    			\draw (210.0, 463.2050807568877) -- (220.0, 445.88457268119896);
    			\draw (210.0, 463.2050807568877) -- (190.0, 463.2050807568877);
    			\draw (220.0, 480.52558883257643) -- (210.0, 463.2050807568877);
    			\draw (220.0, 480.52558883257643) -- (240.0, 480.52558883257643);
    			\draw (240.0, 307.3205080756888) -- (250.0, 324.6410161513776);
    			\draw (250.0, 324.6410161513776) -- (240.0, 341.9615242270663);
    			\draw (240.0, 341.9615242270663) -- (250.0, 359.28203230275506);
    			\draw (240.0, 376.6025403784438) -- (250.0, 359.28203230275506);
    			\draw (240.0, 376.6025403784438) -- (250.0, 393.92304845413264);
    			\draw (240.0, 411.2435565298214) -- (250.0, 393.92304845413264);
    			\draw (240.0, 411.2435565298214) -- (250.0, 428.5640646055101);
    			\draw (250.0, 428.5640646055101) -- (240.0, 445.88457268119896);
    			\draw (250.0, 463.2050807568877) -- (240.0, 445.88457268119896);
    			\draw (250.0, 463.2050807568877) -- (240.0, 480.52558883257643);
    			\draw (240.0, 480.52558883257643) -- (250.0, 497.8460969082653);
    			\draw (250.0, 324.6410161513776) -- (270.0, 324.6410161513776);
    			\draw (280.0, 341.9615242270663) -- (270.0, 324.6410161513776);
    			\draw (280.0, 341.9615242270663) -- (300.0, 341.9615242270663);
    			\draw (270.0, 359.28203230275506) -- (280.0, 341.9615242270663);
    			\draw (270.0, 359.28203230275506) -- (250.0, 359.28203230275506);
    			\draw (270.0, 359.28203230275506) -- (280.0, 376.6025403784438);
    			\draw (300.0, 376.6025403784438) -- (280.0, 376.6025403784438);
    			\draw (280.0, 376.6025403784438) -- (270.0, 393.92304845413264);
    			\draw (250.0, 393.92304845413264) -- (270.0, 393.92304845413264);
    			\draw (270.0, 393.92304845413264) -- (280.0, 411.2435565298214);
    			\draw (300.0, 411.2435565298214) -- (280.0, 411.2435565298214);
    			\draw (270.0, 428.5640646055101) -- (280.0, 411.2435565298214);
    			\draw (270.0, 428.5640646055101) -- (250.0, 428.5640646055101);
    			\draw (270.0, 428.5640646055101) -- (280.0, 445.88457268119896);
    			\draw (300.0, 445.88457268119896) -- (280.0, 445.88457268119896);
    			\draw (270.0, 463.2050807568877) -- (280.0, 445.88457268119896);
    			\draw (250.0, 463.2050807568877) -- (270.0, 463.2050807568877);
    			\draw (280.0, 480.52558883257643) -- (270.0, 463.2050807568877);
    			\draw (280.0, 480.52558883257643) -- (300.0, 480.52558883257643);
    			\draw (280.0, 480.52558883257643) -- (270.0, 497.8460969082653);
    			\draw (270.0, 497.8460969082653) -- (250.0, 497.8460969082653);
    			\draw (310.0, 359.28203230275506) -- (300.0, 341.9615242270663);
    			\draw (300.0, 376.6025403784438) -- (310.0, 359.28203230275506);
    			\draw (300.0, 376.6025403784438) -- (310.0, 393.92304845413264);
    			\draw (300.0, 411.2435565298214) -- (310.0, 393.92304845413264);
    			\draw (300.0, 411.2435565298214) -- (310.0, 428.5640646055101);
    			\draw (300.0, 445.88457268119896) -- (310.0, 428.5640646055101);
    			\draw (310.0, 463.2050807568877) -- (300.0, 445.88457268119896);
    			\draw (310.0, 463.2050807568877) -- (300.0, 480.52558883257643);
    			\draw (330.0, 359.28203230275506) -- (310.0, 359.28203230275506);
    			\draw (330.0, 359.28203230275506) -- (340.0, 376.6025403784438);
    			\draw (360.0, 376.6025403784438) -- (340.0, 376.6025403784438);
    			\draw (330.0, 393.92304845413264) -- (340.0, 376.6025403784438);
    			\draw (330.0, 393.92304845413264) -- (310.0, 393.92304845413264);
    			\draw (330.0, 393.92304845413264) -- (340.0, 411.2435565298214);
    			\draw (360.0, 411.2435565298214) -- (340.0, 411.2435565298214);
    			\draw (330.0, 428.5640646055101) -- (340.0, 411.2435565298214);
    			\draw (330.0, 428.5640646055101) -- (310.0, 428.5640646055101);
    			\draw (330.0, 428.5640646055101) -- (340.0, 445.88457268119896);
    			\draw (360.0, 445.88457268119896) -- (340.0, 445.88457268119896);
    			\draw (330.0, 463.2050807568877) -- (340.0, 445.88457268119896);
    			\draw (310.0, 463.2050807568877) -- (330.0, 463.2050807568877);
    			\draw (360.0, 376.6025403784438) -- (370.0, 393.92304845413264);
    			\draw (360.0, 411.2435565298214) -- (370.0, 393.92304845413264);
    			\draw (370.0, 428.5640646055101) -- (360.0, 411.2435565298214);
    			\draw (370.0, 428.5640646055101) -- (360.0, 445.88457268119896);
    		\end{tikzpicture}$$
    		
    		\caption{A perfect matching (dimer configuration) on $H_{6, 2, 4}$, with a monochromatic weighting on the horizontal edges. Note that although the edges labeled 1 are lined up along a particular diagonal, this does not have to be this case.}
    		\label{single}
    	\end{figure}
Assign a weight of 1 to one horizontal edge in each column of hexagons. Above each horizontal edge with weight 1, assign weights $Q, Q^2, Q^3, \ldots$, and below weights $Q^{-1}, Q^{-2}, \ldots$.  Non-horizontal edges get weight 1.  Then the overall weight of the perfect matching in Figure~\ref{single} is $Q^{-6}$.
\end{example}

    \begin{figure}[ht]
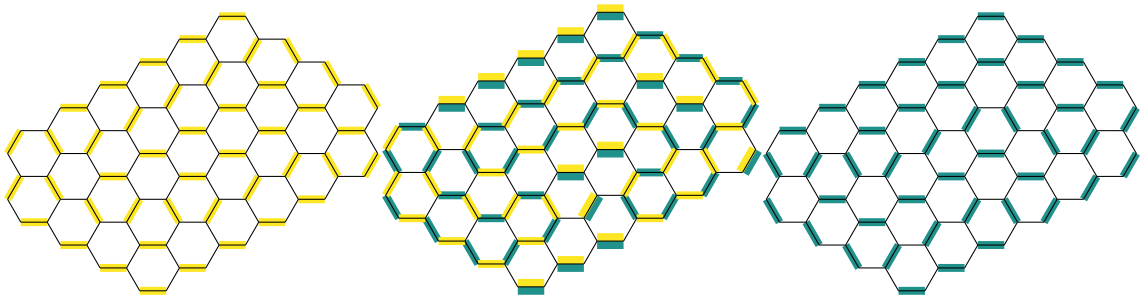

            $$
            % [inline block 0: 9 envs, 52307 chars -> data_tex | \begin{tikzpicture}[scale = 1.5, x=1pt,y=1pt]             	\draw[line width = 3pt, yellow] (100.0, 411.2435565298214) --...]
$$
            
            \caption{A double dimer configuration -- an overlaying of the right and the left single dimer configurations, that consists of closed loops and doubled edges}
            \label{double}
    \end{figure}

    The following definition of the $\SL_2(\mathbb{C})$ double dimer model is due to Kenyon~\cite{kenyon:conformal}:
    \begin{definition}
            Let $G = (V,E)$ be a bipartite graph with a \emph{scalar weight} $w:E \rightarrow \C$ as well as an \emph{$\SL_2$ connection}: a map $\Gamma:E \rightarrow SL_2(\C)$. Then the contribution of a double dimer configuration $DD$ is defined to be 
            \[
            \left(\prod_{e \in m} w(e) \right)
            \times\!\!\!\!\!\! \prod_{\text{closed loops $L$}}
            \text{Tr}\left(\prod_{e \in L} \Gamma(e)\right).\]
    \end{definition}

    \section{The squish map}

    \subsection{Coordinates on the honeycomb grid}

    The honeycomb graph is the tiling of the plane by hexagons; the center of each hexagon is a point in the dual triangular lattice.  A convenient way to give coordinates to the triangular lattice is to draw it on the plane normal to $(1,1,1) \in \mathbb{R}^3$.   This plane has an orthonormal basis
    $$
    \vec{x} = \frac{1}{\sqrt{2}}(-1,1,0) \qquad
    \vec{y} = \frac{1}{\sqrt{6}}\left(
    -1,-1,2
    \right)$$which we won't really use, but we \emph{will} orient our pictures according to it, and we will use words like up, left, bottom, horizontal, etc... in the conventional way with respect to this basis. 

    The set $\mathbb{Z}^3 \subseteq \mathbb{R}^3$ projects onto a copy of the triangular lattice in this plane. Draw the honeycomb graph $G$ in such a way that the lattice points are the centers of the hexagons, then label each lattice point (and thus each hexagon) with any of the lattice points that project to its center.  For instance, the hexagon at the origin has the labels $(0,0,0), \pm(1,1,1), \pm(2,2,2),$ etc.  Call this particular embedding of the honeycomb graph $H$.   In the $\vec{x}, \vec{y}$ coordinates above, the edge common to the hexagon at $(0,0,0)$ and the hexagon at $(0,0,1)$ is horizontal.

    There is also a second embedding of the honeycomb graph onto the plane which is of interest to us, which we shall glibly call $2H$, and it is obtained by projecting the double-sized lattice $(2\Z)\times(2\Z)\times(2\Z)$ to a (bigger) triangular lattice in the plane, and taking the planar dual.  The hexagons of $2H$ have even coordinates $(2i, 2j, 2k)$.

    \subsection{Degenerating H to 2H}
    The squish map can be defined entirely combinatorially, but for visualization purposes it is extremely helpful to first define a continuous degeneration $H(t):[0,1] \rightarrow \mathbb{R}^2$ such that $H(0) = H$ and $H(1) = 2H$.  This degeneration is shown in Figure~\ref{squish} and first appeared in \cite{young:squish}.  

    To define $H(t)$, write $H=(V,E)$, and let $H^{ev}=(V^{ev}, E^{ev})$ be the subgraph of $H$ consisting of all hexagons whose centers have even coordinates; we say that $H^{ev}$ are the ``even hexagons".  Let $P = H \setminus H^{ev}$.  Then the graph $(V,P)$ is a disconnected union of ``propellers" (or $K_{1,3}$'s or ``claws", depending on what dialect of graph theory you speak).  Each propeller has a central vertex $v$ and three leaves $x, y, z$.   For $t \in [0,1]$, define $x(t)=tv + (1-t)x$, so that $x(t)$ is a point on the edge joining $v$ to $x$.  Define $y(t)$ and $z(t)$ similarly, and make the same definitions at each other propeller.  

    The graph embedding $H(t)$ is obtained by drawing the vertex $a$ at position $a(t)$.  For $0 \leq t < 1$, $H(t)$ is an embedding of $H$; indeed, $H(t)$ is an explicit homotopy equivalence.  At $t=1$, however, we have $v=x=y=z$, and thus $H(1)$ degenerates to an embedding of $2H$.  

    \subsection{The squish map}\label{subsection:squish map}

    $H(t)$ defines a 2-to-1 map $Sq$ from $E^{ev}$ to the edges of $2H$: given an edge $e$ of an even hexagon in $H$, find the corresponding edge in $H(0)$, and let $Sq(e)$ be the corresponding edge in $H(1)$. 
    
    \begin{definition}
    The squish map $Sq: D(H) \rightarrow DD(2H)$ sends a dimer configuration $m$ on $H$ to a double dimer configuration $Sq(m)$ on $2H$ as follows: let $m^{ev} = m \cap E^{ev}$; then $Sq(m) = Sq(m^{ev})$.
    \end{definition}

    Indeed, when drawing $m$ on the graph $H(t)$ for $t < 1$, visualize what the squish map is doing: propeller edges of $m$ get shorter and shorter, while the doubled edges get longer and closer together.  This process is shown in Figure~\ref{squish}.

            \begin{figure}[ht]		
                     \begin{tikzpicture}[scale = 0.75]
                    	\def\hexagon{+(0:0.575cm) -- +(60:0.575cm) -- +(120:0.575cm) -- +(180:0.575cm) -- +(240:0.575cm) -- +(300:0.575cm) -- cycle}

                    	\clip[draw] (1.73,1) circle (3.5cm);
                    	
                    	\foreach \x in {-5,...,5}
                    	\foreach \y\ in {-5,...,5}
                    	\draw (1.73*\x,\y) \hexagon;

                    	\foreach \x in {-5,...,5}
                    	\foreach \y\ in {-5,...,5}
                    	\draw (1.73*\x + 1.73*0.5,\y + 0.5) \hexagon;

                    	\foreach \x in {-5,...,5}
                    	\foreach \y\ in {-5,...,5}{%
                    		\draw [very thick, blue] (2 * 1.73*\x,2 * \y) \hexagon;
                    		\node at (2 * 1.73*\x,2 * \y) {$*$};
                    	}

                    	\foreach \x in {-5,...,5}
                    	\foreach \y\ in {-5,...,5}{%
                    		\draw[very thick, blue] (2 * 1.73*\x + 1.73,2 * \y + 1) \hexagon;
                    		\node at (2 * 1.73*\x + 1.73,2 * \y + 1) {$*$};
                    	}

                    \end{tikzpicture}
                    \begin{tikzpicture}[scale = 0.75]
                    	\def\hexagon{+(0:0.75cm) -- +(60:0.75cm) -- +(120:0.75cm) -- +(180:0.75cm) -- +(240:0.75cm) -- +(300:0.75cm) -- cycle}
                    	\def\hexagonweights{+(0:1cm) -- node [anchor = east]{$C$} +(60:1cm) -- node [anchor = north]{$A^{-1}$} +(120:1cm) -- node [anchor = west]{$B^{-1}$} +(180:1cm) -- node [anchor = west]{$C^{-1}$} +(240:1cm) -- node [anchor = south]{$A$} +(300:1cm) -- node [anchor = east]{$B$}cycle;}
                    	\def \squish {+(0: 0.75cm) -- 
                    		++(60: 0.75cm) -- +(60: .42cm) -- +(240: 0cm) -- 
                    		++(180: 0.75cm) -- +(120: .42cm) -- +(240: 0cm) -- 
                    		++(240: 0.75cm) -- +(180: .42cm) -- + (240: 0cm) -- 
                    		++(300: 0.75cm) -- +(240: .42cm) -- +(240: 0cm) -- 
                    		++(360: 0.75cm) -- +(300: .42cm) -- +(240: 0cm) -- 
                    		++(420: 0.75cm) -- +(360: .42cm) -- +(240: 0cm)}
                    	
                    	\clip[draw] (1.73,1) circle (3.5cm);
                    	\draw (1.73, 1) \squish;
                    	\draw[] (3.46, 2) \squish;
                    	\draw (1.73, 3) \squish;
                    	\draw (1.73, -1) \squish;
                    	\draw (0,0) \squish;
                    	\draw (1.73, 3) \squish;
                    	\draw (0, 2) \squish;
                    	\draw (-1.73, 1) \squish;
                    	\draw(3.46, 0) \squish;
                    	\draw(0, -2) \squish;
                    	\draw(0,4) \squish;
                    	\draw(-1.73, 3) \squish;
                    	\draw(-1.73, -1) \squish;
                    	\draw(1.73, 5) \squish;
                    	\draw(1.73, -3) \squish;
                    	\draw (3.46, 4) \squish;
                    	\draw (3.46, -2) \squish;
                    	\draw (3.46 + 1.73, 1) \squish;
                    	\draw (3.46 + 1.73, 3) \squish;
                    	\draw (3.46 + 1.73, -1) \squish;
                    	
                    	\foreach \x in {-5,...,5}
                    	\foreach \y\ in {-5,...,5}{%
                    		\draw [very thick, blue] (2 * 1.73*\x,2 * \y) \hexagon;
                    		\node at (2 * 1.73*\x,2 * \y) {$*$};
                    	}

                    	\foreach \x in {-5,...,5}
                    	\foreach \y\ in {-5,...,5}{%
                    		\draw[very thick, blue] (2 * 1.73*\x + 1.73,2 * \y + 1) \hexagon;
                    		\node at (2 * 1.73*\x + 1.73,2 * \y + 1) {$*$};
                    	}
                    \end{tikzpicture}
                    \begin{tikzpicture}[scale = 0.75]
                    	\def\hexagon{+(0:1cm) -- +(60:1cm) -- +(120:1cm) -- +(180:1cm) -- +(240:1cm) -- +(300:1cm) -- cycle}
                    	\def\hexagonweights{+(0:1cm) -- node [anchor = east]{$C$} +(60:1cm) -- node [anchor = north]{$A^{-1}$} +(120:1cm) -- node [anchor = west]{$B^{-1}$} +(180:1cm) -- node [anchor = west]{$C^{-1}$} +(240:1cm) -- node [anchor = south]{$A$} +(300:1cm) -- node [anchor = east]{$B$}cycle;}
                    	\def \squish {+(0: 1cm) -- 
                    		++(60: 1cm) -- +(60: .155cm) -- +(240: 0cm) -- 
                    		++(180:1cm) -- +(120: .155cm) -- +(240: 0cm) -- 
                    		++(240: 1cm) -- +(180: .155cm) -- + (240: 0cm) -- 
                    		++(300: 1cm) -- +(240: .155cm) -- +(240: 0cm) -- 
                    		++(360: 1cm) -- +(300: .155cm) -- +(240: 0cm) -- 
                    		++(420: 1cm) -- +(360: .155cm) -- +(240: 0cm)}
                    	
                    	\clip[draw] (1.73,1) circle (3.5cm);
                    	\draw[very thick, color = blue] (1.73, 1) \hexagon;
                    	\draw[color = blue] (1.73, 1) \squish;
                    	\draw[] (3.46, 2) \squish;
                    	\draw (1.73, 3) \squish;
                    	\draw (1.73, -1) \squish;
                    	\draw (0,0) \squish;
                    	\draw (1.73, 3) \squish;
                    	\draw (0, 2) \squish;
                    	\draw (-1.73, 1) \squish;
                    	\draw(3.46, 0) \squish;
                    	\draw(0, -2) \squish;
                    	\draw(0,4) \squish;
                    	\draw(-1.73, 3) \squish;
                    	\draw(-1.73, -1) \squish;
                    	\draw(1.73, 5) \squish;
                    	\draw(1.73, -3) \squish;
                    	\draw (3.46, 4) \squish;
                    	\draw (3.46, -2) \squish;
                    	\draw (3.46 + 1.73, 1) \squish;
                    	\draw (3.46 + 1.73, 3) \squish;
                    	\draw (3.46 + 1.73, -1) \squish;
                    	
                    	\foreach \x in {-5,...,5}
                    	\foreach \y\ in {-5,...,5}{%
                    		\draw [very thick, blue] (2 * 1.73*\x,2 * \y) \hexagon;
                    		\node at (2 * 1.73*\x,2 * \y) {$*$};
                    	}

                    	\foreach \x in {-5,...,5}
                    	\foreach \y\ in {-5,...,5}{%
                    		\draw[very thick, blue] (2 * 1.73*\x + 1.73,2 * \y + 1) \hexagon;
                    		\node at (2 * 1.73*\x + 1.73,2 * \y + 1) {$*$};
                    	}
                    	
                    \end{tikzpicture}
                    
                    \caption {The degeneration H(t)}
                    \label{squish}
            \end{figure}
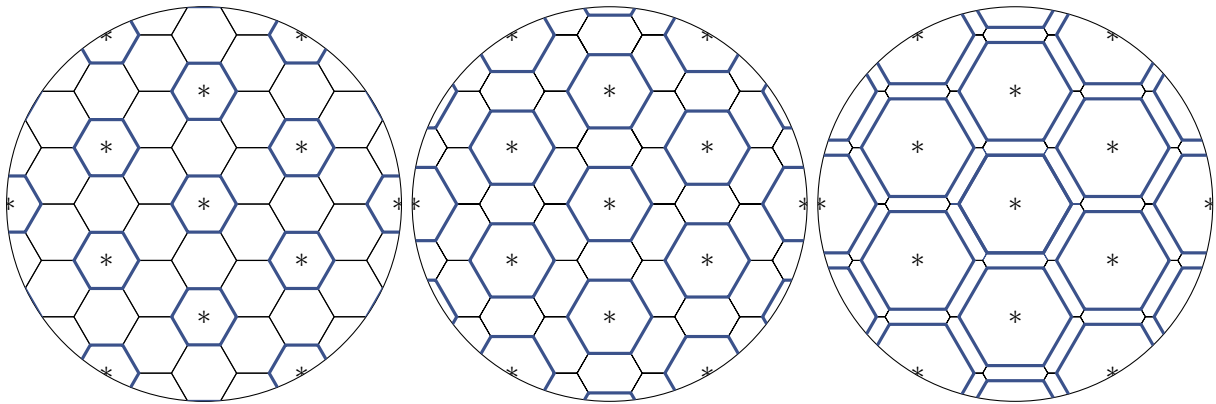

	We now relate the squish map to the $1 \times 1 \times 1$ and $2 \times 2 \times 2$ stacked cubes mentioned in the introduction. We can use the language of plane partitions to visualize a perfect matching in the single dimer model as a stack of cubes in the corner of a room, as in Figure \ref{duploex}. Then putting our matching through the squish map `downsamples' the plane partition: we would use 1/8 as many $2 \times 2 \times 2$ bricks instead. For example, to build a $2 \times 2 \times 4$ prism we can use 16 single cubes, or two larger $2 \times 2 \times 2$ cubes. 
	
	\begin{theorem}
		\label{duplo}
		Consider a single dimer configuration $\mathcal{S}$ sent through the squish map resulting in a double dimer configuration $\mathcal{D}$ made of loops and doubled edges. In each $2 \times 2$ section of the plane partition for $\mathcal{S}$, let $\pi_{min}\left(\!\!\raisebox{-0.48cm}{
                               \begin{tikzpicture}[scale = 0.55]
		\draw[very thick] (0, 0) -- (0, 2);
		\draw (1, 0) -- (1, 2);
		\draw[very thick] (2, 0) -- (2, 2);
		\draw[very thick] (0, 0) -- (2, 0);
		\draw (0, 1) -- (2, 1);
		\draw[very thick] (0, 2) -- (2, 2);
		\node at (0.5, 1.5) {m};
		\node at (0.5, 0.5) {o};
		
		\node at (1.5, 1.5) {n};
		\node at (1.5, 0.5) {p};
	\end{tikzpicture}} \right) = \left\lfloor{\frac{\min(m,n,o,p)}{2}}\right\rfloor$, and $\pi_{max}\left(\!\!
       \raisebox{-0.48cm}{
                               \begin{tikzpicture}[scale = 0.55]
		\draw[very thick] (0, 0) -- (0, 2);
		\draw (1, 0) -- (1, 2);
		\draw[very thick] (2, 0) -- (2, 2);
		\draw[very thick] (0, 0) -- (2, 0);
		\draw (0, 1) -- (2, 1);
		\draw[very thick] (0, 2) -- (2, 2);
		\node at (0.5, 1.5) {m};
		\node at (0.5, 0.5) {o};
		
		\node at (1.5, 1.5) {n};
		\node at (1.5, 0.5) {p};
	\end{tikzpicture}} \right) = \left\lceil{\frac{\max(m,n,o,p)}{2}}\right\rceil$.
		
		These are the minimal and maximal plane partitions that, when overlayed, give us the double dimer configuration $\mathcal{D}$.  
	\end{theorem}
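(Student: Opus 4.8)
The plan is to reduce the global statement to a purely local analysis, one even hexagon of $H$ at a time, and then reassemble the local data into honest plane partitions.

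First I would recall the dictionary between the single dimer model on a honeycomb and plane partitions: a perfect matching is a lozenge tiling, and its height function is exactly the plane partition $\pi$ whose entries are the column heights, as in Figure~\ref{fig:downsample scad}. Correspondingly, a double dimer configuration on $2H$ is a superposition of two coarse dimer configurations, that is, of two coarse plane partitions. Since the doubled edges of $\mathcal{D}$ are forced to lie in both matchings, while each loop of $\mathcal{D}$ admits exactly the two complementary matchings along it (the two colorings in Figure~\ref{twoloops}), the ordered pairs $(\sigma_1,\sigma_2)$ of coarse plane partitions superposing to $\mathcal{D}$ are indexed by independent binary choices, one per loop. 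Under the componentwise order on plane partitions this family has a minimum and a maximum, and the goal is to identify them with the stated floor and ceiling formulas.

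Next I would carry out the local analysis. Fix an even hexagon $h$ of $H$ and let $\bar h$ be its image hexagon in $2H$; under the degeneration of Figure~\ref{squish} the three pairs of parallel edges of $h$ squish onto the three coincident edge-pairs of $\bar h$, so the restriction $m^{ev}$ to the edges of $h$ determines which edges of $\bar h$ are doubled and which belong to a loop. I would then translate this into heights: the three lozenge types meeting at the corresponding $2\times 2$ block record the four fine column heights $m,n,o,p$, while recoloring a loop of $\mathcal{D}$ changes the two coarse height functions by $\pm 1$ throughout the region the loop encloses, i.e. inserts or removes a layer of coarse $2\times 2\times 2$ cubes there. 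Thus the two extreme colorings raise or lower the coarse height at $\bar h$ by the maximal allowed amount. The componentwise-minimal decomposition then fills each coarse column with as few coarse cubes as possible, namely the largest $k$ with $2k\le\min(m,n,o,p)$, giving $\pi_{min}=\lfloor\min(m,n,o,p)/2\rfloor$; the maximal decomposition covers every fine cube, i.e. the smallest $k$ with $2k\ge\max(m,n,o,p)$, giving $\pi_{max}=\lceil\max(m,n,o,p)/2\rceil$. (By monotonicity of $\pi$ the four values form a monotone $2\times 2$ block, so the minimum and maximum are realized at opposite corners.) Finally I would check that the assignments $\bar h\mapsto\lfloor\min/2\rfloor$ and $\bar h\mapsto\lceil\max/2\rceil$ are each weakly decreasing in coarse rows and columns---hence are genuine plane partitions---and that their superposition reproduces $\mathcal{D}$, a loop appearing at $\bar h$ exactly when $\lfloor\min/2\rfloor<\lceil\max/2\rceil$.

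The main obstacle I anticipate is the passage from the edge-level description of $Sq$ to the floor and ceiling arithmetic in a way that is simultaneously local and globally consistent. It is easy to bound the coarse heights block by block; the subtle point is to show that the componentwise minimum and maximum over all loop-colorings are attained by a \emph{single} decomposition, so that the per-block formulas patch together without interaction between distinct loops forcing a correction. Verifying this compatibility---equivalently, that the minimal (respectively maximal) coloring of every loop can be chosen independently and still yields a valid coarse plane partition whose columns match the per-block values---is the crux of the argument.
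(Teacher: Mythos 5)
Your strategy---describe the fibre of $Sq$ over $\mathcal{D}$ as a product of independent binary choices, one per loop, and then identify the componentwise extremes of that family with the floor and ceiling formulas---is genuinely different from the paper's argument, which is an induction on the number of boxes of $\pi$: delete one unit cube, apply the induction hypothesis, and check that re-adding it either toggles the matching of a single even hexagon between the two states of Figure~\ref{boxchange} (raising $\pi_{max}$ by one when the cube sits at an even position $(2i,2j,2k)$, raising $\pi_{min}$ by one when it completes a coarse $2\times2\times2$ cube) or changes nothing coarse at all. That induction establishes $Sq(\pi)=\pi_{min}\sqcup\pi_{max}$ edge by edge with global consistency built in, because only one block changes at each step.

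As written, however, your proposal stops exactly at the step you yourself flag as the crux, and that step is a genuine gap rather than a routine verification. Concretely, you have not shown (i) that $\lfloor\min(m,n,o,p)/2\rfloor$ and $\lceil\max(m,n,o,p)/2\rceil$ are weakly decreasing in coarse rows and columns, i.e.\ are plane partitions at all; (ii) that the superposition of the two corresponding coarse matchings agrees with $Sq(m^{ev})$ \emph{edge by edge}---your criterion ``a loop appears at $\bar h$ exactly when $\lfloor\min/2\rfloor<\lceil\max/2\rceil$'' concerns the regions enclosed by loops, not the individual edges of $2H$ that the squish map actually produces from the even hexagons of $m$; and (iii) that the per-loop extremal colorings, chosen simultaneously, land on precisely these two height functions rather than on some other pair differing from them on a union of loop interiors. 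None of these is false, but each requires an argument, and the block-by-block bound you describe does not by itself exclude interaction between distinct loops. The most economical repair is essentially to import the paper's induction as the proof of (i)--(iii): adding one unit cube changes at most one even hexagon's matching and changes exactly one of $\lfloor\min/2\rfloor$, $\lceil\max/2\rceil$ by one, in lockstep, so the local formulas and the global configuration never fall out of agreement.
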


	A few natural questions one might ask at this point would be: what single dimer configurations squish to a given double dimer configuration? In the language of building blocks, how can we tell which larger $2 \times 2 \times 2$ brick configurations overlay with one another to give us a particular double dimer model? How do we recover that information if we were only given the $1 \times 1 \times 1$ smaller blocks?
	
	\begin{example}
		
		\begin{figure}[ht]
			$$\begin{tikzpicture}[scale = 0.25, x=1cm,y=1cm]
				\def \hex {+(0, 0) -- +(2, 0) -- +(3, 1*1.73) -- +(2, 2*1.73) -- +(0, 2*1.73) -- +(-1, 1*1.73) -- +(0, 0)};
				\def \up {+(0, 0) -- +(3, 1.73) -- +(3, -1*1.73) -- +(0, -2*1.73) -- +(0,0)
					+(1, -1*1.73) -- +(2, 0*1.73)};
				\def \down {+(0, 0) -- +(-3, 1.73) -- +(-3, -1*1.73) -- +(0, -2*1.73) -- +(0,0)
					+(-1, -1.73) -- +(-2, 0)};
				\def \flat {+(0, 0) -- +(3, 1.73) -- +(0, 2*1.73) -- +(-3, 1*1.73) -- +(0,0)
					+(-1, 1.73) -- +(1, 1.73)};
				
				\def\up#1{
					\begin{scope}[shift={#1}]
						\draw [very thin, fill=black!35!white] (0, 0) -- (3, 1.73) -- (3, -1*1.73) -- (0, -2*1.73) -- (0,0);
						\draw [very thick] (1, -1*1.73) -- (2, 0*1.73);
					\end{scope}
				}
				\def\down#1{
					\begin{scope}[shift={#1}]
						\draw [very thin, fill=black!15!white] (0, 0) -- (-3, 1.73) -- (-3, -1*1.73) -- (0, -2*1.73) -- (0,0);
						\draw [very thick] (-1, -1.73) -- (-2, 0);
					\end{scope}
				}
				\def\flat#1{
					\begin{scope}[shift={#1}]
						\draw [very thin, fill=black!5!white] (0, 0) -- (3, 1.73) -- (0, 2*1.73) -- (-3, 1*1.73) -- (0,0);
						\draw [very thick] (-1, 1.73) -- (1, 1.73);
					\end{scope}
				}
				
				\flat{(1, 1.73)}
				\flat{(7, 3*1.73)}
				\flat{(-5, 3*1.73)}
				\flat{(1, -3*1.73)}
				\flat{(1, -7*1.73)}
				\flat{(4, -6*1.73)}
				\flat{(7, -5*1.73)}
				\flat{(10, -4*1.73)}
				\flat{(-2, -6*1.73)}
				\flat{(-5, -5*1.73)}
				\flat{(-8, -4*1.73)}
				\flat{(1, 5*1.73)}
				\flat{(-2, 4*1.73)}
				\flat{(4, 4*1.73)}
				\flat{(-2, -2*1.73)}
				\flat{(4, -2*1.73)}
				\up{(1, 1.73)}
				\up{(7, 3*1.73)}
				\up{(-5, 3*1.73)}
				\up{(1, -3*1.73)}
				\up{(-5, 1*1.73)}
				\up{(4, -2*1.73)}
				\up{(7, -1*1.73)}
				\up{(-11, -1*1.73)}
				\up{(-11, 1*1.73)}
				\up{(-11, 3*1.73)}
				\up{(-11, 5*1.73)}
				\up{(-8, 6*1.73)}
				\up{(-5, 7*1.73)}
				\up{(-2, 8*1.73)}
				\up{(-2, 4*1.73)}
				\up{(7, 1*1.73)}
				\down{(4, 4*1.73)}
				\down{(-5, 1*1.73) }
				\down{(-5, -1*1.73)}
				\down{(-2, -2*1.73)}
				\down{(7, 1*1.73)}
				\down{(1, -3*1.73)} 
				\down{(-5, 3*1.73)}
				\down{(1, 1.73)} 
				\down{(7, 3*1.73)}
				\down{(13, -1*1.73)}
				\down{(13, 1*1.73)}
				\down{(13, 3*1.73)}
				\down{(13, 5*1.73)}
				\down{(10, 6*1.73)}
				\down{(7, 7*1.73)}
				\down{(4, 8*1.73)}
			\end{tikzpicture} 
			\qquad \begin{tikzpicture}[scale = 0.25, x=1cm,y=1cm]
				\def \hex {+(0, 0) -- +(2, 0) -- +(3, 1*1.73) -- +(2, 2*1.73) -- +(0, 2*1.73) -- +(-1, 1*1.73) -- +(0, 0)};
				\def \up {+(0, 0) -- +(3, 1.73) -- +(3, -1*1.73) -- +(0, -2*1.73) -- +(0,0)
					+(1, -1*1.73) -- +(2, 0*1.73)};
				\def \down {+(0, 0) -- +(-3, 1.73) -- +(-3, -1*1.73) -- +(0, -2*1.73) -- +(0,0)
					+(-1, -1.73) -- +(-2, 0)};
				\def \flat {+(0, 0) -- +(3, 1.73) -- +(0, 2*1.73) -- +(-3, 1*1.73) -- +(0,0)
					+(-1, 1.73) -- +(1, 1.73)};
				
				\def\up#1{
					\begin{scope}[shift={#1}]
						\draw [very thin, fill=black!35!white] (0, 0) -- (3, 1.73) -- (3, -1*1.73) -- (0, -2*1.73) -- (0,0);
						\draw [very thick] (1, -1*1.73) -- (2, 0*1.73);
					\end{scope}
				}
				\def\down#1{
					\begin{scope}[shift={#1}]
						\draw [very thin, fill=black!15!white] (0, 0) -- (-3, 1.73) -- (-3, -1*1.73) -- (0, -2*1.73) -- (0,0);
						\draw [very thick] (-1, -1.73) -- (-2, 0);
					\end{scope}
				}
				\def\flat#1{
					\begin{scope}[shift={#1}]
						\draw [very thin, fill=black!5!white] (0, 0) -- (3, 1.73) -- (0, 2*1.73) -- (-3, 1*1.73) -- (0,0);
						\draw [very thick] (-1, 1.73) -- (1, 1.73);
					\end{scope}
				}
				
				\flat{(-11, 5*1.73)}
				\flat{(-8, 6*1.73)}
				\flat{(-5, 7*1.73)}
				\flat{(-2, 8*1.73)}
				\flat{(1, 7*1.73)}
				\flat{(4, 6*1.73)}
				\flat{(7, 5*1.73)}
				\flat{(-2, -4*1.73)}
				\flat{(-5, -3*1.73)}
				\flat{(1, -3*1.73)}
				\flat{(4, -2*1.73)}
				\flat{(-8, -2*1.73)}
				\flat{(-2, 0*1.73)}
				\flat{(-2, 4*1.73)}
				\flat{(-5, 3*1.73)}
				\flat{(1, 3*1.73)}
				
				\up{(1, 1.73)}
				\up{(1, 3*1.73)}
				\up{(-2, 0*1.73)}
				\up{(-5, 3*1.73)}
				\up{(-11, 1*1.73)}
				\up{(-11, 3*1.73)}
				\up{(-11, 5*1.73)}
				\up{(-8, 6*1.73)}
				\up{(-5, 7*1.73)}
				\up{(7, 1*1.73)}
				\up{(7, 3*1.73)}
				\up{(7, 5*1.73)}
				\up{(7, -1*1.73)}
				\up{(4, -2*1.73)}
				\up{(1, -3*1.73)}
				\up{(-2, -4*1.73)}
				
				\down{(-5, 1*1.73) }
				\down{(-8, -2*1.73)}
				\down{(-5, -3*1.73)}
				\down{(-2, 0*1.73)}
				\down{(7, 1*1.73)}
				\down{(-2, -4*1.73)} 
				\down{(-5, 3*1.73)}
				\down{(7, 3*1.73)}
				\down{(7, 5*1.73)}
				\down{(4, 6*1.73)}
				\down{(1, 7*1.73)}
				\down{(1, 3*1.73)}
				\down{(-11, -1*1.73)}
				\down{(-11, 1*1.73)}
				\down{(-11, 3*1.73)}
				\down{(-11, 5*1.73)}
			\end{tikzpicture}$$
			
				$$\begin{array}{|c|c|c|c|}
					\hline
					3 & 3 & 3 & 0\\\hline
					3 & 2 & 1 & 0 \\\hline
					3 & 1 & 1 & 0 \\\hline
					0 & 0 & 0 & 0 \\\hline
				\end{array} \qquad\qquad\qquad\qquad\qquad\qquad\qquad \begin{array}{|c|c|c|c|}
					\hline
					4 & 4 & 4 & 4 \\\hline
					4 & 3 & 3 & 1 \\\hline
					4 & 3 & 2 & 1 \\\hline
					4 & 1 & 1 & 1 \\\hline
				\end{array}$$
			\caption{Two single dimer configurations shown as boxed plane partitions}
			\label{duploex}
		\end{figure}

	Consider the two plane partitions given by Figure \ref{duploex}. Both of these configurations become a loop within a loop (as in Figure \ref{looploop}) when sent through the squish map. If we picture the plane partitions as boxes in a room, the minimal configuration that squishes to the loop within a loop is the diagram on the left of Figure \ref{duploex}, and the maximal one is the diagram on the right. (Note that these are just two out of a possible 23,364 configurations that squish to the same loop-within-a-loop. See Theorem \ref{firstroot}.) To determine which possible two overlayed $2 \times 2 \times 2$ single dimer configurations give the same double dimer configuration, we start by downsampling either plane partition. 
	
	For this example, we will work through the process on the minimum diagram and partition above, though the process will work on either partition (or any of the other $23,362$ in between).
	 
	We want to round down to get the first single dimer configuration, $\pi_{min}$, and round up to get the second single dimer configuration, $\pi_{max}$. To do this rounding process, start by sectioning the plane partition into $2 \times 2$ grids. $$\begin{tikzpicture}[scale = 0.55]
		\draw[very thick] (0, 0) -- (0, 4);
		\draw (1, 0) -- (1, 4);
		\draw[very thick] (2, 0) -- (2, 4);
		\draw (3, 0) -- (3, 4);
		\draw[very thick] (4, 0) -- (4, 4);
		\draw[very thick] (0, 0) -- (4, 0);
		\draw (0, 1) -- (4, 1);
		\draw[very thick] (0, 2) -- (4, 2);
		\draw (0, 3) -- (4, 3);
		\draw[very thick] (0, 4) -- (4, 4);
		\node at (0.5, 0.5) {0};
		\node at (0.5, 1.5) {3};
		\node at (0.5, 2.5) {3};
		\node at (0.5, 3.5) {3};
		
		\node at (1.5, 0.5) {0};
		\node at (1.5, 1.5) {1};
		\node at (1.5, 2.5) {2};
		\node at (1.5, 3.5) {3};
		
		\node at (2.5, 0.5) {0};
		\node at (2.5, 1.5) {1};
		\node at (2.5, 2.5) {1};
		\node at (2.5, 3.5) {3};
		
		\node at (3.5, 0.5) {0};
		\node at (3.5, 1.5) {0};
		\node at (3.5, 2.5) {0};
		\node at (3.5, 3.5) {0};
		
	\end{tikzpicture}$$
	
	Each $2\times 2$ section will become one entry in the downsampled plane partition. When we round down, we want to count how many \emph{complete} $2 \times 2 \times 2$ blocks exist in each section. (You can also think about this as removing smaller $1 \times 1 \times 1$ cubes one-at-a-time until you are only left with $2\times 2\times 2$ blocks.) Thus, we have \raisebox{-0.48cm}{\begin{tikzpicture}[scale = 0.55]
		\draw[very thick] (0, 0) -- (0, 2);
		\draw (1, 0) -- (1, 2);
		\draw[very thick] (2, 0) -- (2, 2);
		\draw[very thick] (0, 0) -- (2, 0);
		\draw (0, 1) -- (2, 1);
		\draw[very thick] (0, 2) -- (2, 2);
		\node at (0.5, 1.5) {1};
		\node at (0.5, 0.5) {0};
		
		\node at (1.5, 1.5) {0};
		\node at (1.5, 0.5) {0};
	\end{tikzpicture}} as the first single dimer configuration. 
	
	Now to find the second single dimer configuration, we want to round up to the nearest larger cube. (You can also think about this as adding smaller cubes until we fill the $2 \times 2 \times 2$ cube.) So the maximal configuration in its entirety becomes rewritten as \raisebox{-0.48cm}{\begin{tikzpicture}[scale = 0.55]
		\draw[very thick] (0, 0) -- (0, 2);
		\draw (1, 0) -- (1, 2);
		\draw[very thick] (2, 0) -- (2, 2);
		\draw[very thick] (0, 0) -- (2, 0);
		\draw (0, 1) -- (2, 1);
		\draw[very thick] (0, 2) -- (2, 2);
		\node at (0.5, 1.5) {2};
		\node at (0.5, 0.5) {2};
		
		\node at (1.5, 1.5) {2};
		\node at (1.5, 0.5) {1};
	\end{tikzpicture}}.
	
	Now we have a pair of plane partitions that, when overlayed, become the same double dimer configuration we had as the result under the squish map. In Figure \ref{looploop}, the yellow (lighter colored) configuration corresponds to \raisebox{-0.48cm}{\begin{tikzpicture}[scale = 0.55]		\draw[very thick] (0, 0) -- (0, 2);
		\draw (1, 0) -- (1, 2);
		\draw[very thick] (2, 0) -- (2, 2);
		\draw[very thick] (0, 0) -- (2, 0);
		\draw (0, 1) -- (2, 1);
		\draw[very thick] (0, 2) -- (2, 2);
		\node at (0.5, 1.5) {1};
		\node at (0.5, 0.5) {0};
		
		\node at (1.5, 1.5) {0};
		\node at (1.5, 0.5) {0};
	\end{tikzpicture}}, and the blue (darker colored) configuration corresponds to \raisebox{-0.48cm}{\begin{tikzpicture}[scale = 0.55]
		\draw[very thick] (0, 0) -- (0, 2);
		\draw (1, 0) -- (1, 2);
		\draw[very thick] (2, 0) -- (2, 2);
		\draw[very thick] (0, 0) -- (2, 0);
		\draw (0, 1) -- (2, 1);
		\draw[very thick] (0, 2) -- (2, 2);
		\node at (0.5, 1.5) {2};
		\node at (0.5, 0.5) {2};
		
		\node at (1.5, 1.5) {2};
		\node at (1.5, 0.5) {1};
		\end{tikzpicture}}.
	
		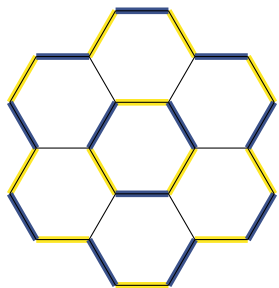
\begin{figure}[h]
			$$\begin{tikzpicture}[x=1pt,y=1pt]
				\draw[line width = 2.5pt, yellow] (310.0, 601.7691453623979) -- (300.0, 584.4486372867091);
				\draw[line width = 2.5pt, yellow] (340.0, 515.166604983954) -- (360.0, 515.166604983954);
				\draw[line width = 2.5pt, yellow] (390.0, 601.7691453623979) -- (400.0, 584.4486372867091);
				\draw[line width = 2.5pt, yellow] (340.0, 584.4486372867091) -- (360.0, 584.4486372867091);
				\draw[line width = 2.5pt, yellow] (340.0, 619.0896534380867) -- (330.0, 601.7691453623979);
				\draw[line width = 2.5pt, yellow] (370.0, 601.7691453623979) -- (360.0, 619.0896534380867);
				\draw[line width = 2.5pt, yellow] (310.0, 567.1281292110203) -- (300.0, 549.8076211353316);
				\draw[line width = 2.5pt, yellow] (310.0, 532.4871130596428) -- (330.0, 532.4871130596428);
				\draw[line width = 2.5pt, yellow] (340.0, 549.8076211353316) -- (330.0, 567.1281292110203);
				\draw[line width = 2.5pt, yellow] (360.0, 549.8076211353316) -- (370.0, 567.1281292110203);
				\draw[line width = 2.5pt, yellow] (370.0, 532.4871130596428) -- (390.0, 532.4871130596428);
				\draw[line width = 2.5pt, yellow] (400.0, 549.8076211353316) -- (390.0, 567.1281292110203);
				
				\draw[line width = 2.5pt, blue] (310.0, 601.7691453623979) -- (330.0, 601.7691453623979);
				\draw[line width = 2.5pt, blue] (370.0, 601.7691453623979) -- (390.0, 601.7691453623979);
				\draw[line width = 2.5pt, blue] (340.0, 619.0896534380867) -- (360.0, 619.0896534380867);
				\draw[line width = 2.5pt, blue] (340.0, 515.166604983954) -- (330.0, 532.4871130596428);
				\draw[line width = 2.5pt, blue] (400.0, 584.4486372867091) -- (390.0, 567.1281292110203);
				\draw[line width = 2.5pt, blue] (370.0, 532.4871130596428) -- (360.0, 515.166604983954);
				\draw[line width = 2.5pt, blue] (340.0, 549.8076211353316) -- (360.0, 549.8076211353316);
				\draw[line width = 2.5pt, blue] (340.0, 584.4486372867091) -- (330.0, 567.1281292110203);
				\draw[line width = 2.5pt, blue] (370.0, 567.1281292110203) -- (360.0, 584.4486372867091);
				\draw[line width = 2.5pt, blue] (310.0, 567.1281292110203) -- (300.0, 584.4486372867091);
				\draw[line width = 2.5pt, blue] (310.0, 532.4871130596428) -- (300.0, 549.8076211353316);
				\draw[line width = 2.5pt, blue] (400.0, 549.8076211353316) -- (390.0, 532.4871130596428);
				
				\draw (310.0, 532.4871130596428) -- (300.0, 549.8076211353316);
				\draw (310.0, 567.1281292110203) -- (300.0, 549.8076211353316);
				\draw (310.0, 567.1281292110203) -- (300.0, 584.4486372867091);
				\draw (310.0, 601.7691453623979) -- (300.0, 584.4486372867091);
				
				\draw (340.0, 515.166604983954) -- (360.0, 515.166604983954);
				\draw (340.0, 515.166604983954) -- (330.0, 532.4871130596428);
				\draw (310.0, 532.4871130596428) -- (330.0, 532.4871130596428);
				\draw (340.0, 549.8076211353316) -- (330.0, 532.4871130596428);
				\draw (340.0, 549.8076211353316) -- (360.0, 549.8076211353316);
				\draw (340.0, 549.8076211353316) -- (330.0, 567.1281292110203);
				\draw (310.0, 567.1281292110203) -- (330.0, 567.1281292110203);
				\draw (340.0, 584.4486372867091) -- (330.0, 567.1281292110203);
				\draw (340.0, 584.4486372867091) -- (360.0, 584.4486372867091);
				\draw (340.0, 584.4486372867091) -- (330.0, 601.7691453623979);
				\draw (310.0, 601.7691453623979) -- (330.0, 601.7691453623979);
				\draw (340.0, 619.0896534380867) -- (330.0, 601.7691453623979);
				\draw (340.0, 619.0896534380867) -- (360.0, 619.0896534380867);

				\draw (370.0, 532.4871130596428) -- (360.0, 515.166604983954);
				\draw (370.0, 532.4871130596428) -- (360.0, 549.8076211353316);
				\draw (370.0, 567.1281292110203) -- (360.0, 549.8076211353316);
				\draw (370.0, 567.1281292110203) -- (360.0, 584.4486372867091);
				\draw (370.0, 601.7691453623979) -- (360.0, 584.4486372867091);
				\draw (370.0, 601.7691453623979) -- (360.0, 619.0896534380867);
				
				\draw (370.0, 532.4871130596428) -- (390.0, 532.4871130596428);
				\draw (400.0, 549.8076211353316) -- (390.0, 532.4871130596428);
				\draw (400.0, 549.8076211353316) -- (390.0, 567.1281292110203);
				\draw (370.0, 567.1281292110203) -- (390.0, 567.1281292110203);
				\draw (400.0, 584.4486372867091) -- (390.0, 567.1281292110203);
				\draw (390.0, 601.7691453623979) -- (400.0, 584.4486372867091);
				\draw (370.0, 601.7691453623979) -- (390.0, 601.7691453623979);\end{tikzpicture}$$
			
			\caption{A loop within a loop in the double dimer model}
			\label{looploop}
		\end{figure}
	\end{example}

	\begin{example}\label{example:downsample}
		For the plane partition $$\begin{tikzpicture}[scale = 0.5]
			\draw[very thick] (0, 0) -- (0, 8);
			\draw (1, 0) -- (1, 8);
			\draw[very thick] (2, 0) -- (2, 8);
			\draw (3, 0) -- (3, 8);
			\draw[very thick] (4, 0) -- (4, 8);
			\draw[very thick] (0, 0) -- (4, 0);
			\draw (0, 1) -- (4, 1);
			\draw[very thick] (0, 2) -- (4, 2);
			\draw (0, 3) -- (4, 3);
			\draw[very thick] (0, 4) -- (4, 4);
			\draw (0, 5) -- (4, 5);
			\draw[very thick] (0, 6) -- (4, 6);
			\draw (0, 7) -- (4, 7);
			\draw[very thick] (0, 8) -- (4, 8);
			
			\node at (0.5, 0.5) {1};
			\node at (0.5, 1.5) {2};
			\node at (0.5, 2.5) {3};
			\node at (0.5, 3.5) {4};
			\node at (0.5, 4.5) {5};
			\node at (0.5, 5.5) {6};
			\node at (0.5, 6.5) {7};
			\node at (0.5, 7.5) {8};
			
			\node at (1.5, 0.5) {1};
			\node at (1.5, 1.5) {2};
			\node at (1.5, 2.5) {3};
			\node at (1.5, 3.5) {3};
			\node at (1.5, 4.5) {4};
			\node at (1.5, 5.5) {4};
			\node at (1.5, 6.5) {6};
			\node at (1.5, 7.5) {8};
			
			\node at (2.5, 0.5) {1};
			\node at (2.5, 1.5) {1};
			\node at (2.5, 2.5) {2};
			\node at (2.5, 3.5) {3};
			\node at (2.5, 4.5) {3};
			\node at (2.5, 5.5) {3};
			\node at (2.5, 6.5) {6};
			\node at (2.5, 7.5) {6};
			
			\node at (3.5, 0.5) {0};
			\node at (3.5, 1.5) {1};
			\node at (3.5, 2.5) {1};
			\node at (3.5, 3.5) {2};
			\node at (3.5, 4.5) {3};
			\node at (3.5, 5.5) {3};
			\node at (3.5, 6.5) {5};
			\node at (3.5, 7.5) {5};
			
			\end{tikzpicture}$$
		then $$\begin{tikzpicture}[scale = 0.5]
			\node at (-2, 4) {$\pi_{min} = $};
			\draw[very thick] (0, 0) -- (0, 8);
			\draw[very thick] (2, 0) -- (2, 8);
			\draw[very thick] (4, 0) -- (4, 8);
			\draw[very thick] (0, 0) -- (4, 0);
			\draw[very thick] (0, 2) -- (4, 2);
			\draw[very thick] (0, 4) -- (4, 4);
			\draw[very thick] (0, 6) -- (4, 6);
			\draw[very thick] (0, 8) -- (4, 8);
			
			\node at (1, 1) {\Large 0};
			\node at (1, 3) {\Large 1};
			\node at (1, 5) {\Large 2};
			\node at (1, 7) {\Large 3};
			
			\node at (3, 1) {\Large 0};
			\node at (3, 3) {\Large 0};
			\node at (3, 5) {\Large 1};
			\node at (3, 7) {\Large 2};
			\node at (7, 4) {and};
		\end{tikzpicture} \qquad \begin{tikzpicture}[scale = 0.5]
		\node at (-2, 4) {$\pi_{max} = $};
		\draw[very thick] (0, 0) -- (0, 8);
		\draw[very thick] (2, 0) -- (2, 8);
		\draw[very thick] (4, 0) -- (4, 8);
		\draw[very thick] (0, 0) -- (4, 0);
		\draw[very thick] (0, 2) -- (4, 2);
		\draw[very thick] (0, 4) -- (4, 4);
		\draw[very thick] (0, 6) -- (4, 6);
		\draw[very thick] (0, 8) -- (4, 8);
		
		\node at (1, 1) {\Large 1};
		\node at (1, 3) {\Large 2};
		\node at (1, 5) {\Large 3};
		\node at (1, 7) {\Large 4};
		
		\node at (3, 1) {\Large 1};
		\node at (3, 3) {\Large 2};
		\node at (3, 5) {\Large 2};
		\node at (3, 7) {\Large 3};
		\end{tikzpicture}$$
	\end{example}
	
	\begin{remark}
		Consider a plane partition that could already have been made out of $2 \times 2 \times 2$ boxes. We can downsample in a straightforward manner. Each $2 \times 2$ region in the plane partition contains all the same even entry, so the entry in $\pi_{min}$ would be half that number, as it would be for $\pi_{max}$. Since all four entries were the same, then we have no rounding to do, so $\pi_{min} = \pi_{max}$. Thus when we overlay the single dimer configurations corresponding to $\pi_{min}$ and $\pi_{max}$, we get all doubled edges in the double dimer configuration. 
	\end{remark}
	
	\begin{proof}[Proof of Theorem \ref{duplo}]
		
		Consider a given plane partition $\pi$ that may not break down nicely into $2 \times 2 \times 2$ cubes, where 
		$\pi$ has $k$ boxes. Then $Sq(\pi) = \pi_{min} \sqcup \pi_{max}$ as double dimer configurations. We proceed by induction on $k$.  The base case is $k=0$, which is straightforward by the remarks above: $\pi, \pi_{\min}, \pi_{\max}$ and all the corresponding single  or double  dimer configurations are minimal.
		
		Suppose that $\pi$ has $k + 1$ boxes. Delete a box from $\pi$ to create a $\pi'$ that has $k$ boxes. Then via the induction hypothesis we have that $Sq(\pi') = \pi'_{min} \sqcup \pi'_{max}$. 
		If we add the $(k+1)$th box back in, we land in one of three cases. 
		
		In the first case, we have started a new $2 \times 2 \times 2$ cube by adding that cube in the $(2i, 2j, 2k)$ position. Then, when viewed as a plane partition,
		$\pi_{max}(i,j) = \pi'_{max}(i,j) + 1$, while the minimum configuration stays the same
		($\pi_{min}(i,j) = \pi'_{min}(i,j)$). Under the squish map we get that $Sq(\pi) = Sq(\pi')$, except at the hexagon located at $(i,j,k)$, where the perfect matching around said hexagon changes from the left of Figure \ref{boxchange} to the right. 
		
		In the second case, we have added the cube into the last empty slot of a $2 \times 2 \times 2$ larger cube, so the new cube has gone into position $(2{i+1}, 2{j+1}, 2{k+1})$. Then the maximum plane partition remains the same, so $\pi_{max}(i,j) = \pi'_{max}(i,j)$, but $\pi_{min}(i,j) = \pi'_{min}(i, j)+1$. So under the squish map we have that $Sq(\pi) = Sq(\pi')$, except at $(i,j,k)$, where the matching around the hexagon changes from the right of Figure \ref{boxchange} to the left. 
		
		Finally, the last case occurs when we add a cube anywhere else; i.e. adding this smaller cube neither completes a $2 \times 2 \times 2$ box nor is the first cube in an otherwise empty $2 \times 2 \times 2$ box. Here we have that $\pi_{min} = \pi'_{min}$ 
		and $\pi_{max} = \pi'_{max}$. In this case we have no change under the squish map, so $Sq(\pi) = Sq(\pi')$ exactly.
	\end{proof}
	
	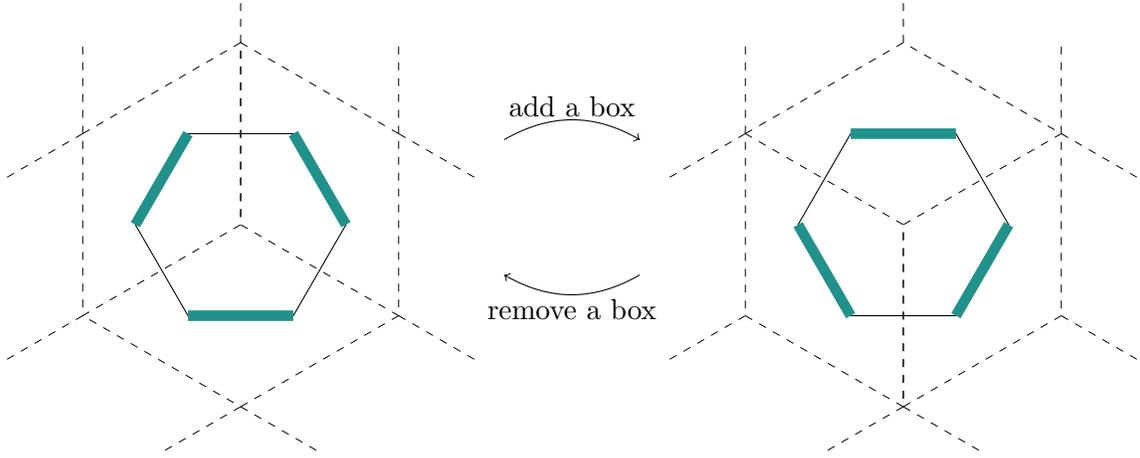
\begin{figure}[ht] 
		$$\begin{tikzpicture}[scale = 0.7]
			\draw [very thin, dashed] (5, -1*1.73) -- (2, -2*1.73) -- (-1, -1*1.73);
			\draw [very thin, dashed] (5, 1*1.73) -- (2, 2*1.73) -- (2, 0*1.73) -- (5, -1*1.73) -- (5,1*1.73);
			\draw [very thin, dashed] (-1, 1*1.73) -- (2, 2*1.73) -- (2, 0*1.73) -- (-1, -1*1.73) -- (-1,1*1.73);
			\draw[thin] (0, 0) -- (1, -1.73) -- (3, -1.73) -- (4, 0) -- (3, 1.73) -- (1, 1.73) -- (0, 0);
			\draw[line width = 4pt, color = green] (0, 0) (1, -1.73) -- (3, -1.73) (4, 0) -- (3, 1.73) (1, 1.73) -- (0, 0);
			\draw[very thin, dashed] (-1, 1.73) -- (-1, 2*1.73);
			\draw[very thin, dashed] (2, 2*1.73) -- (2, 2*1.73+1.73/2);
			\draw[very thin, dashed] (5, 1.73) -- (5, 2*1.73);
			\draw[very thin, dashed] (-1, 1.73) -- (-2.5, 1.73/2);
			\draw[very thin, dashed] (-1, -1.73) -- (-2.5, -3*1.73/2);
			\draw[very thin, dashed] (5, 1.73) -- (6.5, 1.73/2);
			\draw[very thin, dashed] (5, -1.73) -- (6.5, -3*1.73/2);
			\draw[very thin, dashed] (2, -2*1.73) -- (0.5, -5*1.73/2); 
			\draw[very thin, dashed] (2, -2*1.73) -- (3.5, -5*1.73/2); 
		\end{tikzpicture}
		\begin{tikzpicture}[scale = 0.9]
			\draw[->] (4, 1) to[bend left] (6, 1);
			\draw[->] (6, -1) to[bend left] (4, -1);
			\node[anchor = center] at (5, 1.5) {add a box};
			\node[anchor = center] at (5, -1.5) {remove a box};
			\node[color = white] at (5, -2*1.73) {.};
		\end{tikzpicture}
		\begin{tikzpicture}[scale = 0.7]
			\draw [very thin, dashed] (5, -1*1.73) -- (8, -2*1.73) -- (8, 0*1.73) -- (5, 1*1.73) -- (5, -1*1.73);
			\draw [very thin, dashed] (11, -1*1.73) -- (8, -2*1.73) -- (8, 0*1.73) -- (11, 1*1.73) -- (11,-1*1.73);
			\draw [very thin, dashed] (5, 1.73) -- (8, 2*1.73) -- (11, 1.73);
			\draw[thin] (6, 0) -- (7, -1.73) -- (9, -1.73) -- (10, 0) -- (9, 1.73) -- (7, 1.73) -- (6, 0); 				
			\draw[line width = 4pt, color = green] (6, 0) -- (7, -1.73) (9, -1.73) -- (10, 0) (9, 1.73) -- (7, 1.73) (6, 0);
			\draw[very thin, dashed] (-1+6, 1.73) -- (-1+6, 2*1.73);
			\draw[very thin, dashed] (2+6, 2*1.73) -- (2+6, 2*1.73+1.73/2);
			\draw[very thin, dashed] (5+6, 1.73) -- (5+6, 2*1.73);
			\draw[very thin, dashed] (-1+6, 1.73) -- (-2.5+6, 1.73/2);
			\draw[very thin, dashed] (-1+6, -1.73) -- (-2.5+6, -3*1.73/2);
			\draw[very thin, dashed] (5+6, 1.73) -- (6.5+6, 1.73/2);
			\draw[very thin, dashed] (5+6, -1.73) -- (6.5+6, -3*1.73/2);
			\draw[very thin, dashed] (2+6, -2*1.73) -- (0.5+6, -5*1.73/2); 
			\draw[very thin, dashed] (2+6, -2*1.73) -- (3.5+6, -5*1.73/2); 
		\end{tikzpicture}$$
		
		\caption{Adding or removing a box}
		\label{boxchange}
	\end{figure}
	
	\subsection{Transfer matrix approach}
            Now that we have the squish map defined on graphs and plane partitions, we want to be able to take a weight function in the single dimer model and push it through the squish map to give us an $\SL_2$ connection and a scalar weight function in the double dimer model. To do this, we first consider a perfect matching on the single dimer model. Once the graph has been squished, we can consider walking along a path around a given loop in the now-double dimer model. Given a starting vertex, each path consists of a series of right and left turns at each new vertex encountered until we once again reach the starting vertex. These turns are given labels $L$ and $R$ for left and right assigned to each vertex in the path.  We want to interpret $L$ and $R$ as $2 \times 2$ transfer matrices for keeping track of the loop's contribution to the dimer model. This was the strategy of \cite{young:squish}, which used different matrices. 
            
            To determine what particular matrices $L$ and $R$ should be, we consider a walk along the path snippet given by Figure \ref{leftturn}. We begin at the bottom edge and then turn left at the next vertex, so we could step onto either edge $z$ or $w$. If $x$ is an edge in the perfect matching, then $z$ cannot also be a matched edge, so the only next step could be $w$. Similarly, if $y$ is in the perfect matching, then a left turn onto the next matched edge could include either $z$ or $w$. 
            
            \begin{figure}[ht]
            $$\begin{tikzpicture}
            		\node at (-8, 0) {
            			$$\begin{tikzpicture}[scale = 3.5]
            				\def \squish {+(0: 1cm) -- 
            					++(60: 1cm) -- +(60: .155cm) -- +(240: 0cm) -- 
            					++(180:1cm) -- +(120: .155cm) -- +(240: 0cm) -- 
            					++(240: 1cm) -- +(180: .155cm) -- + (240: 0cm) -- 
            					++(300: 1cm) -- +(240: .155cm) -- +(240: 0cm) -- 
            					++(360: 1cm) -- +(300: .155cm) -- +(240: 0cm) -- 
            					++(420: 1cm) -- +(360: .155cm) -- +(240: 0cm)}
            				\clip[draw] (1.3,-.3) rectangle (2.93,.9);
            				\node at (1.8, .2) {$x$};
            				\node at (1.8, -.2) {$y$};
            				\node at (2.4, .7) {$z$};
            				\node at (2.8, .4) {$w$};
            				
            				\draw (0,0) \squish;
            				\draw (1.73, 1) \squish;
            				\draw (1.73, -1) \squish;
            				\draw (1.73*2, 0) \squish;
            			\end{tikzpicture}$$ 
            			
            		};
            		\clip[draw] (0,0) circle (3);
            		\node at (0,0) {\includegraphics[width = .5\textwidth]{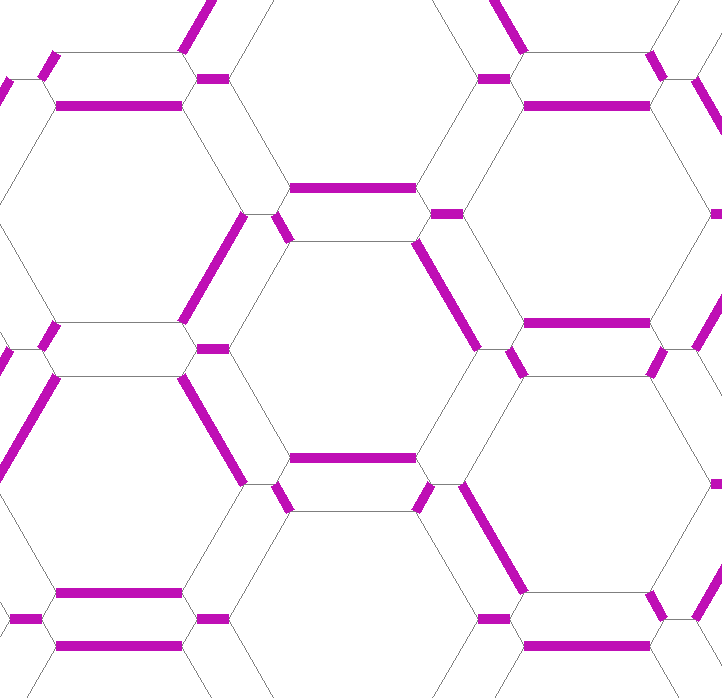}};
            	\end{tikzpicture}$$
            \caption{Left: the possible $x, y, z$, and $w$ edge weights over a left-hand turn.
            Right: A perfect matching on a squished hexagon lattice.}
            \label{leftturn}
            \end{figure}
            
            We represent this with the vectors $$ \begin{bmatrix} x \\ 0 \end{bmatrix} \qquad \begin{bmatrix} 0 \\ y\end{bmatrix} $$
            
            as our two possible starting locations. Then we need the first vector to map only to edge $w$, and the second vector to map to both edges $z$ and $w$, so we get the following maps 
             $$\begin{bmatrix}x \\ 0 \end{bmatrix} \mapsto \begin{bmatrix} 0 \\ w\end{bmatrix} \qquad \begin{bmatrix}0 \\ y \end{bmatrix} \mapsto \begin{bmatrix} z \\ w\end{bmatrix}.$$

             A similar scenario happens for $R$, so we then find the $2 \times 2$ matrix to make the above maps hold, getting that
              $$L = \begin{bmatrix}
                    0 & 1 \\ 1 & 1
            \end{bmatrix} \qquad \text{ and } \qquad R = \begin{bmatrix}
                    1 & 1 \\ 1 & 0
            \end{bmatrix}$$

            \begin{remark}
            	Note that instead of defining $L$ and $R$ as above, we could also have defined $L$ to be $R^{-1}$, and similar for $R$. So instead we would have had $$L = \begin{bmatrix}
            		0 & 1 \\ 1 & -1
            	\end{bmatrix} \qquad \text{ and } \qquad R = \begin{bmatrix}
            		-1 & 1 \\ 1 & 0
            	\end{bmatrix}.$$ Sometimes it may be more convenient to define $L$ and $R$ this way (with the negative signs), such as if we had specialized the edge weights to be $\pm 1$. We ultimately wanted to use the version with only positive entries, however, to (hopefully) make it more clear to the reader how things are working.
                    \end{remark}
                    
           \subsection{From transfer matrices to the $\text{SL}_2$ connection}
We can now use the matrices $L$ and $R$ to compute the contribution of a closed loop under the squish map from the single to the double dimer model by taking the trace of the product. However, 
we're supposed to have $2 \times 2$ matrices, with determinant 1  associated to the \emph{edges} of $H(i,j,k)$, not the vertices. We use the following process to include the left and right turn information in the edges of the graph as matrix weights. 

To begin with, we associate general $2 \times 2$ edge-weight matrices $A, B$, and $C$ to be placed on each horizontal, north-east, and north-west matched edge, respectively, as in Figure \ref{oldweights}. 
\[
A =\left[\begin{matrix}a & 0 \\ 0 & a^{-1} \end{matrix} \right], \qquad
B=\left[\begin{matrix}b^{-1} & 0 \\ 0 & b \end{matrix} \right], \qquad
C=\left[\begin{matrix}c & 0 \\ 0 & c^{-1} \end{matrix} \right] 
\]

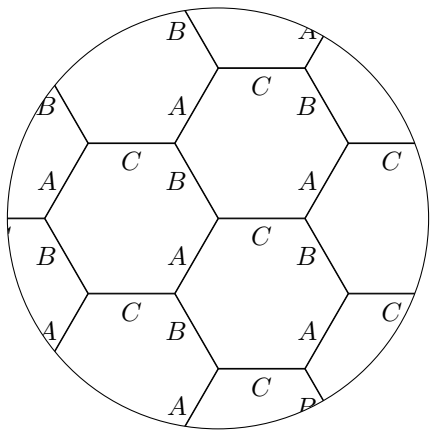
\begin{figure}[ht]	
	
	\small
	$$\begin{tikzpicture}[scale = 2]
		\def\hexagon{+(0:0.575cm) -- +(60:0.575cm) -- +(120:0.575cm) -- +(180:0.575cm) -- +(240:0.575cm) -- +(300:0.575cm) -- cycle}
		\def\hexagonweights{+(0:0.575cm) -- node [anchor = east]{$B$} +(60:0.575cm) -- node [anchor = north]{} +(120:0.575cm) -- node [anchor = west]{} +(180:0.575cm) -- node [anchor = west]{} +(240:0.575cm) -- node [anchor = north]{$C$} +(300:0.575cm) -- node [anchor = east]{$A$}cycle;}
		\def \squish {+(0: 1cm) -- 
			++(60: 1cm) -- +(60: .155cm) -- +(240: 0cm) -- 
			++(180:1cm) -- +(120: .155cm) -- +(240: 0cm) -- 
			++(240: 1cm) -- +(180: .155cm) -- + (240: 0cm) -- 
			++(300: 1cm) -- +(240: .155cm) -- +(240: 0cm) -- 
			++(360: 1cm) -- +(300: .155cm) -- +(240: 0cm) -- 
			++(420: 1cm) -- +(360: .155cm) -- +(240: 0cm)}
		
		\clip[draw] (0.575,1) circle (1.4cm);

		\foreach \x in {-5,...,5}
		\foreach \y\ in {-5,...,5}
		\draw (1.73*\x,\y) \hexagonweights;

		\foreach \x in {-5,...,5}
		\foreach \y\ in {-5,...,5}
		\draw (1.73*\x + 1.73*0.5,\y + 0.5) \hexagonweights;

	\end{tikzpicture}$$
	
	\normalsize 
	
	\caption{Matrix weights $A$, $B$, and $C$ on the single dimer model.}
	\label{oldweights}
\end{figure}

    To handle the process of moving $L$ and $R$ from the vertices to the edges, we need some way to include the information for turns in an unknown path. Then we want to come up with new matrices $\alpha$, $\beta$, and $\gamma$ that include the information from $A$, $B$, and $C$, but that also encode the information in the turns. For example, the move $A \to B$ is always a left turn, and the move $A \to C$ is always a right turn. So we somehow want the new $\beta\alpha$ to include the same information as $BLA$ and $\gamma\alpha$ to encode the information from $C R A$ (reading paths from right-to-left). 
    
    \begin{definition}
          	Let $J = \begin{bmatrix} 1 & 0 \\ 0 & -1\end{bmatrix}$, and $i = \sqrt{-1}$. 
          	
          	Then $\alpha$, $\beta$, and $\gamma$ are defined as follows:
          	
          	$$\displaystyle\alpha := iAJ = \begin{bmatrix}i a & 0 \vspace{1ex}\\
          		0 & \frac{1}{i a} \end{bmatrix}$$
          	
          	$$\beta := iRBLJ = 
          	\displaystyle\begin{bmatrix} i \, b & -i \, b - \frac{i}{b} \\
          		0 & \frac{1}{ib}\vspace{1ex} \end{bmatrix} 
          	$$
          	
          	$$\gamma := -iLCRJ  = 
          	\displaystyle\begin{bmatrix} \frac{1}{ic} & 0 \\
          		-i \, c - \frac{i}{c} & i \, c \end{bmatrix} .
          	$$
          	\label{matrices}
    \end{definition}
    
    \begin{theorem}
          	The single dimer model on the $2x \times 2y \times 2z$ hexagon lattice with $A$, $B$, and $C$ weights (as in Figure \ref{oldweights}) gives rise to the same partition function as that on the double dimer model on the $x \times y \times z$ hexagon graph with scalar weight of 1 everywhere and connection given by $\alpha$, $\beta$, and $\gamma$ (as in Figure \ref{newweights}). 
    \end{theorem}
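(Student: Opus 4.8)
The plan is to prove equality of partition functions by fibering the single-dimer sum over the squish map and then checking, loop by loop, that each fiber contributes exactly the corresponding $\SL_2$ trace. First I would read the diagonal matrices $A=\mathrm{diag}(a,a^{-1})$, $B=\mathrm{diag}(b^{-1},b)$, $C=\mathrm{diag}(c,c^{-1})$ as encoding the genuinely scalar, $2$-periodic edge weights of the single dimer model: the two diagonal entries record the two weights of the pair of parallel edges of $H_{2x,2y,2z}$ that the squish map collapses to a single edge of $H_{x,y,z}$, and the index $\{1,2\}$ is the ``state'' telling us which of that pair is matched. With this reading, for a fixed double dimer configuration $\mathcal D\in DD_{x,y,z}$ the single dimer configurations $\mathcal S$ with $Sq(\mathcal S)=\mathcal D$ factor as independent choices: every doubled edge of $\mathcal D$ forces both parallel edges of $H$ to be matched, contributing the scalar $a\cdot a^{-1}=1$ (and likewise for $B,C$), while each loop of $\mathcal D$ may be resolved into its admissible single-dimer matchings. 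Thus the single-dimer partition function $Z_{\text{single}}$ equals $\sum_{\mathcal D}\prod_{\text{loops }\ell}\big(\text{weighted sum over resolutions of }\ell\big)$, and it remains to identify the per-loop factor with $\operatorname{Tr}\big(\prod_{e\in\ell}\Gamma(e)\big)$.

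Next I would set up the transfer-matrix interpretation of a single loop, exactly as the text does around Figure~\ref{leftturn}: walking around $\ell$ one alternates matched-edge matrices ($A$, $B$, or $C$) with turn matrices ($L$ or $R$), and the construction of $L,R$ guarantees that the trace of this alternating product equals the weighted sum over all admissible state sequences around the loop — that is, over the single-dimer resolutions of $\ell$, each weighted by its product of scalar weights. This is the standard fact that $\operatorname{Tr}(M_n\cdots M_1)=\sum_{i_1,\dots,i_n}(M_n)_{i_n i_1}\cdots(M_1)_{i_1 i_2}$ enumerates closed state-paths; the zero entries of $L$ and $R$ encode exactly the forbidden transitions described after Figure~\ref{leftturn}, and the diagonal entries of $A,B,C$ supply the scalar weight of each state.

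The heart of the argument — and the step I expect to be the main obstacle — is the purely matrix-theoretic identity
\[
\operatorname{Tr}\Big(\textstyle\prod_{e\in\ell}\Gamma(e)\Big)=\operatorname{Tr}\big(\text{alternating product of }A,B,C\text{ and }L,R\text{ around }\ell\big),
\]
i.e.\ that moving the turn data off the vertices and onto the edges via $\alpha=iAJ$, $\beta=iRBLJ$, $\gamma=-iLCRJ$ preserves the trace around every closed loop. I would prove this by pushing the $J$ factors around the cyclic word: writing each $\Gamma(e)=i^{\pm1}P_eJ$ with $P_e\in\{A,\,RBL,\,LCR\}$ and using $J^2=I$, the $J$'s telescope in pairs and convert every second factor $P_e$ into its conjugate $JP_eJ$, with no leftover $J$ since every loop in the bipartite honeycomb has even length. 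One then checks that these conjugations together with the wrap-around recombine the separated $L$'s and $R$'s into exactly one genuine turn matrix per vertex — here the relations $JLJ=-R^{-1}$ and $JRJ=-L^{-1}$ (matching the Remark's alternative matrices) do the work — recovering the alternating product of the previous step. The delicate part is the scalar prefactor: each edge contributes a factor $i$ (with an extra $-1$ for each $\gamma$), and each $J$-conjugation a sign, so one must run a parity count over the numbers of $\alpha$-, $\beta$-, $\gamma$-edges and of left/right turns in an arbitrary closed loop and verify that all of these signs and powers of $i$ multiply to $1$. Establishing this cancellation uniformly for every loop — rather than merely for the minimal hexagonal faces — is where the real care is needed; once it is in hand, summing the matched per-loop identities over all loops of $\mathcal D$ and over all $\mathcal D\in DD_{x,y,z}$ yields the claimed equality of partition functions.
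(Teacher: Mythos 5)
Your proposal is correct and follows essentially the same route as the paper: the paper's proof consists precisely of your third step, carried out by computing all twelve adjacent products $\chi_{j}\chi_{i}$ of connection matrices (each of which collapses, via $J^2=I$ and identities such as $RJR=L$, to an edge--turn--edge word with bookkeeping factors at the ends) and then inducting on the length of the path, which is a reorganization of your $J$-telescoping argument. The outer structure you describe (fibering over the squish map, doubled edges contributing $a\cdot a^{-1}=1$, the trace of the alternating $A,B,C,L,R$ product enumerating the resolutions of a loop) is taken for granted in the paper rather than proved, and the global sign and power-of-$i$ cancellation that you flag as the delicate point is likewise left unverified there, so you have not missed anything the paper supplies.
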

    
    \begin{figure}[ht]
     \tiny
     $$\begin{tikzpicture}[scale = 1]
     	\def\hexagon{+(0:1cm) -- +(60:1cm) -- +(120:1cm) -- +(180:1cm) -- +(240:1cm) -- +(300:1cm) -- cycle}
     	\def\hexagonweights{+(0:1cm) -- node [anchor = east]{$\beta$} +(60:1cm) -- node [anchor = north]{$\gamma$} +(120:1cm) -- node [anchor = west]{$\alpha^{-1}$} +(180:1cm) -- node [anchor = west]{$\beta^{-1}$} +(240:1cm) -- node [anchor = south]{$\gamma^{-1}$} +(300:1cm) -- node [anchor = east]{$\alpha$}cycle;}
     	\def \squish {+(0: 1cm) -- 
     		++(60: 1cm) -- +(60: .155cm) -- +(240: 0cm) -- 
     		++(180:1cm) -- +(120: .155cm) -- +(240: 0cm) -- 
     		++(240: 1cm) -- +(180: .155cm) -- + (240: 0cm) -- 
     		++(300: 1cm) -- +(240: .155cm) -- +(240: 0cm) -- 
     		++(360: 1cm) -- +(300: .155cm) -- +(240: 0cm) -- 
     		++(420: 1cm) -- +(360: .155cm) -- +(240: 0cm)}
     	
     	\clip[draw] (1.73,1) circle (3.5cm);
     	\draw[=] (1.73, 1) \squish;
     	\draw[] (3.46, 2) \squish;
     	\draw (1.73, 3) \squish;
     	\draw (1.73, -1) \squish;
     	\draw (0,0) \squish;
     	\draw (1.73, 3) \squish;
     	\draw (0, 2) \squish;
     	\draw (-1.73, 1) \squish;
     	\draw(3.46, 0) \squish;
     	\draw(0, -2) \squish;
     	\draw(0,4) \squish;
     	\draw(-1.73, 3) \squish;
     	\draw(-1.73, -1) \squish;
     	\draw(1.73, 5) \squish;
     	\draw(1.73, -3) \squish;
     	\draw (3.46, 4) \squish;
     	\draw (3.46, -2) \squish;
     	\draw (3.46 + 1.73, 1) \squish;
     	\draw (3.46 + 1.73, 3) \squish;
     	\draw (3.46 + 1.73, -1) \squish;
     	
     	\foreach \x in {-5,...,5}
     	\foreach \y\ in {-5,...,5}{%
     		\draw [] (2 * 1.73*\x,2 * \y) \hexagonweights;
     	}

     	\foreach \x in {-5,...,5}
     	\foreach \y\ in {-5,...,5}{%
     		\draw[] (2 * 1.73*\x + 1.73,2 * \y + 1) \hexagonweights;
     	}
     	
     \end{tikzpicture}$$
     \normalsize
     
     \caption{The $\alpha$, $\beta$, and $\gamma$ connection on the squished hexagon lattice}
     \label{newweights}
    \end{figure}
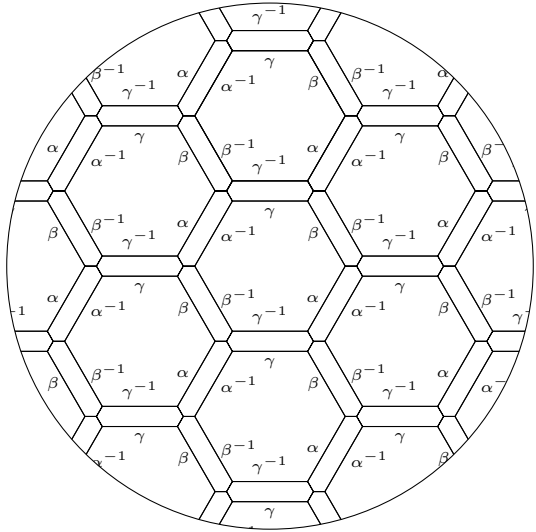

\begin{proof}
	From the examples above, Let's consider the right turn $A \to C$. In our matrices, we write this as $\gamma\alpha$, which is $(-iLCRJ)(iAJ) = LCRA$, which does indeed include the information $CRA$, as we desired. There are 12 total ways to step from one edge to another, whose equivalent products are given below. 
	\begin{align*}
		\text{Left } & \text{Turns} & 
		\text{Right } & \text{Turns} \\
		\beta\alpha &= -RBLA & 
		%print(expand(BB*AA) - expand(-R*B*L*A) == 0)
		\alpha\beta &= -JARBLJ \\
		%print(expand(AA*BB) - expand(-J*A*R*B*L*J) == 0)
		\gamma\beta &= LCLBLJ & 
		%print(expand(CC*BB) - expand(L*C*L*B*L*J) == 0)
		\beta\gamma &= -RBRCRJ \\
		%print(expand(BB*CC) - expand(-R*B*R*C*R*J) == 0)
		\alpha^{-1}\gamma &= -JA^{-1}LCRJ & 
		%print(expand(~AA*CC) - expand(-J*~A*L*C*R*J) == 0)
		\gamma\alpha^{-1} &= -LCRA^{-1} \\
		%print(expand(CC*~AA) - expand(-L*C*R*~A) == 0)
		\beta^{-1}\alpha^{-1} &= -RB^{-1}LA^{-1} & 
		%print(expand(~BB*~AA) - expand(-R*~B*L*~A) == 0)
		\alpha^{-1}\beta^{-1}&= -JA^{-1}RB^{-1}LJ \\
		%print(expand(~AA*~BB) - expand(-~A*J*R*~B*L*J) == 0)
		\gamma^{-1}\beta^{-1} &= LC^{-1}LB^{-1}LJ & 
		%print(expand(~CC*~BB) - expand(-L*~C*L*~B*L*J) == 0)
		\beta^{-1}\gamma^{-1}&= RB^{-1}RC^{-1}RJ \\
		%print(expand(~BB*~CC) - expand-(R*~B*R*~C*R*J) == 0)
		\alpha\gamma^{-1} &= -JALC^{-1}RJ & 
		%print(expand(AA*~CC) - expand(-J*A*L*~C*R*J) == 0)
		\gamma^{-1}\alpha &= -LC^{-1}RA
		%print(expand(~CC*AA) - expand(-L*~C*R*A) == 0)
	\end{align*}
	
    Assume we have a valid path of length $n$ with matrix string $\chi_n\chi_{n-1}\ldots \chi_1$. Then this string is equivalent to one of the form $$X_nM_nT_{n-1}M_{n-1}\ldots T_2M_2T_1M_1X_1,$$ where each $M \in \{A, B, C, A^{-1}, B^{-1}, C^{-1}\}$ is one of the original edge matrices, $T_i \in \{R, L\}$ is a turning matrix $L$ or $R$, and the $X_i$-terms on either end are the extras above used for bookkeeping (the $LJ$, $RJ$, $L$, $R$, or $J$ `bookkeeping' terms on the left or right of each full string). If we were to take another step along this path, then that $(n+1)$th step would involve left-multiplying our string by $\chi_{n+1}$, so we now have $\chi_{n+1}\chi_n\ldots \chi_1 = (\chi_{n+1}\chi_n)\chi_{n-1}\ldots \chi_1$. But the $\chi_{n+1}\chi_n$ is a valid construction of two edge-weight matrices with a left or right turn in the middle (possibly with an $X_{n+1}$ or $X_n$ bookkeeping term), which means it is one of the 12 turns explicitly computed above. So $\chi_{n+1}\chi_n\ldots \chi_1$ gives us a string that is equivalent to using $A$, $B$, and $C$ with left- and right-turn matrices. 
\end{proof}

    \begin{example}
            Consider the path around a single hexagon with these new weights. If we begin at the bottom edge and then travel counterclockwise around the loop, then we perform matrix multiplication (reading right to left) in the order we reach the edges, getting $\gamma^{-1}\beta^{-1}\alpha^{-1}\gamma\beta\alpha$. This product gives us the total connection for the path. 
            
            Now if we compute this matrix out for the left-turn version, we get
            \begin{align*}
            	&\hspace{-4ex}\gamma^{-1}\beta^{-1}\alpha^{-1}\gamma\beta\alpha\\
             \hspace{3ex}&= (iLC^{-1}RJ)(-iRB^{-1}LJ)(iA^{-1}J)(-iLCRJ)(iRBLJ)(iAJ)\\
             &= i^6LC^{-1}RJRB^{-1}LJA^{-1}JLCRJRBLJAJ\\
             &= -LC^{-1}LB^{-1}LA^{-1}iLCLBLA\\
            &= \begin{bmatrix}
            	a^{2} b^{2} c^{2} + a^{2} b^{2} + a^{2} c^{2} + a^{2} + 1 & \frac{{\left(a^{2} b^{2} c^{2} + a^{2} b^{2} + a^{2} + 1\right)} {\left(b^{2} + 1\right)}}{a^{2} b^{2}} \\
            	\frac{{\left(a^{2} b^{2} c^{2} + a^{2} c^{2} + a^{2} + 1\right)} {\left(c^{2} + 1\right)}}{c^{2}} & \frac{a^{2} b^{4} c^{4} + a^{2} b^{4} c^{2} + a^{2} b^{2} c^{4} + 3 \, a^{2} b^{2} c^{2} + a^{2} b^{2} + a^{2} c^{2} + b^{2} c^{2} + a^{2} + b^{2} + c^{2} + 1}{a^{2} b^{2} c^{2}}
            	\end{bmatrix}
			\end{align*}
   
            and the trace of this matrix is
            $a^{2} b^{2} c^{2} + a^{2} b^{2} + a^{2} c^{2} + b^{2} c^{2} + a^{2} + b^{2} + c^{2} + \frac{1}{a^{2}} + \frac{1}{b^{2}} + \frac{1}{c^{2}} + \frac{1}{a^{2} b^{2}} + \frac{1}{a^{2} c^{2}} + \frac{1}{b^{2} c^{2}} + \frac{1}{a^{2} b^{2} c^{2}} + 4$
            which has 18 terms (including repeated terms).

	So, in particular, we know that there are only 20  $2 \times 2 \times 2$ boxed plane partitions: the minimal one, the maximal one, and 18 others. Then using the trace of our matrix, we have accounted for all 18 terms that correspond to the 18 perfect matchings which get squished by the squish map to a single loop.  These are the correct weights for plane partitions with 2-periodic weights.  
\end{example}

\section{Generating functions and specializations}
\label{sec:gf}

Note that the previous sections have used periodic edge weights and connections. Here we generalize to cover several natural weight functions for plane partition enumeration. 

\subsection{Arbitrary weights}

We first consider an \emph{arbitrary} nonzero weight function $w:G \rightarrow \mathbb{C}^*$. Note that every vertex $v$ of $G$ is a part of some propeller:  either $v$ is the center vertex of a propeller, or not (in which case, only one of the edges incident to $v$ is part of the propeller).

We modify the weight function $w$ by performing a so-called \emph{gauge transformation}: for each vertex $v$ which is not the center of a propeller; suppose that the edge $e$ connects $v$ to the center of the propeller.  Divide the weights of all of $v$'s incident edges by $w(e)$. This operation changes the dimer model partition function only by an overall constant, which we can ignore at least in the case where $G$ is a finite subgraph of the honeycomb graph.  Assume without loss of generality that this has been done.

Then, given two edges $e_1, e_2$ which get sent to $e$ under the squish map, let
$
w(e) = \sqrt{w(e_1) w(e_2)},
$
the geometric mean of the weights of $e_1$ and $e_2$.  Define a new weighting on $G$ by
$$
\widetilde{w}(e_1) = \frac{w(e_1)}{w(e)}, \qquad \widetilde{w}(e_2) = \frac{w(e_2)}{w(e)}.
$$
This weighting $\widetilde{w}$ has the property that $\widetilde{w}(e_1) = \widetilde{w}(e_2)^{-1}$, so we can push it through the squish map as before.

\subsection{Periodic weights on plane partitions}

One has to be careful with computing the weight of a perfect matching $m$ on any infinite graph, such as $H$. There are infinitely many edges in such a graph, so one would need to consider issues of convergence before multiplying all of the weights together to find the weight of a perfect matching. There is then a separate convergence issue when trying to sum over $m$.

However, our main interest is in plane partitions, which are in bijection with certain perfect matchings on $H$. Plane partitions come with a natural generating function, computed by MacMahon, as stated in Section \ref{sec:definitions}. We now define
$$M(a, q) = \prod_{i \geq 1} \left(\frac{1}{1-aq^{i}}\right)^i$$
for weighted plane partitions~\cite{macmahon}.  Indeed, there is a particular weighting which is uniquely relevant on the entire honeycomb lattice: the $\mathbb{Z}_2 \times \mathbb{Z}_2$ periodic weighting.

We assign the weights \emph{to the lattice points} as follows:
\begin{align}
w(i,j,k) = \begin{cases}
q & \text{ if } i-j \equiv 0, i-k \equiv 0 \pmod{2},\\
r & \text{ if } i-j \equiv 1, i-k \equiv 0 \pmod{2},\\
s & \text{ if } i-j \equiv 0, i-k \equiv 1 \pmod{2},\\
t & \text{ if } i-j \equiv 1, i-k \equiv 1 \pmod{2}.
\end{cases}
\label{box labels}
\end{align}

There are a variety of edge weight functions on the dimer model which correspond to this; one which is well behaved with respect to the squish map is (partially) shown in Figure~\ref{preconnection}. We previously defined the weight function on a finite portion of the honeycomb graph (as in the monochromatic weighting on Figure \ref{single}). To extend this weighting to the entire plane, set $p^3=q$ and include constants $k_1$, $k_2$, and $k_3$. 

Choose each of $k_1$, $k_2$, and $k_3$ so that the desired region of the plane has the appropriate edge weights. (Generally we choose this in such a way that the edge weights near the center of the region in question are all of low degree.) This is comparable to choosing the weight ``1" on a particular place in a given column of horizontal edges from Example \ref{monochromatic}. The result will be that each `strip' of hexagons (in each vertical, northeast/southwest, or northwest/southeast diagonal, as in figure \ref{preconnection}) will have a weight of $1$ in the desired location, with increasing and decreasing powers traveling either direction away from the center. 

We can decompose this weighting into two pieces, as shown on the bottom of Figure~\ref{preconnection}.  The first piece becomes the $\SL_2$ connection, as described above and seen in the lower right of the figure. The second piece becomes the scalar weights, which is shown in the lower left. For further details on various weight functions of the honeycomb graph, see~\cite{young:thesis}. 

    \begin{theorem}
            Using the weight function given in Figure \ref{single} and a monodromy given by placing $\alpha$ on every northeast/southwest edge, $\beta$ on every northwest/southeast edge, and $\gamma$ on every horizontal (east/west) edge, then the squish map is measure preserving.
            \label{bigtheorem}
    \end{theorem}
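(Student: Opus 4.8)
The plan is to prove measure preservation in the strong, fiberwise form: for every double dimer configuration $\mathcal{D}$ in the image of the squish map, the total weight of the single dimer configurations lying over it equals the double dimer contribution of $\mathcal{D}$. Since the scalar weight is $1$ everywhere, that contribution is just the product over the loops of $\mathcal{D}$ of the traces of the connection around each loop, so the whole statement reduces to the identity
\[
\sum_{\mathcal{S}\,:\,Sq(\mathcal{S})=\mathcal{D}} w(\mathcal{S}) \;=\; \prod_{\text{loops } L} \tr\!\left(\prod_{e\in L}\Gamma(e)\right),
\]
where $\Gamma(e)\in\{\alpha,\beta,\gamma,\alpha^{-1},\beta^{-1},\gamma^{-1}\}$ is read off from the direction of $e$ and the direction of traversal. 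First I would localize the single dimer weight to the even hexagons: after the gauge transformation of the preceding subsection every propeller edge carries weight $1$, and since the squish map records exactly $m^{\mathrm{ev}}=m\cap E^{\mathrm{ev}}$ while forgetting the (now weight-$1$) propeller edges, $w(\mathcal{S})$ depends only on $m^{\mathrm{ev}}$ and factors as a product of local contributions indexed by the edges of $2H$. This is what lets the fiber $Sq^{-1}(\mathcal{D})$ be analyzed one connected component of $\mathcal{D}$ at a time.

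Next I would split the components of $\mathcal{D}$ into doubled edges and loops. A doubled edge forces both of its two preimage edges into $\mathcal{S}$, and the normalization $\widetilde w(e_1)=\widetilde w(e_2)^{-1}$ makes their weights reciprocal, so each doubled edge contributes a factor of $1$; this matches the scalar weight $1$ on the coarse graph and the fact that doubled edges sit in no loop. A loop $L$, by contrast, admits several lifts, since walking around $L$ the single matching can resolve the pair of preimage edges at a vertex in more than one admissible way. Because lifts of distinct loops are independent, the fiber sum factors over loops and the problem collapses to a single loop.

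The crux is this per-loop identity, and here I would invoke the transfer-matrix machinery already assembled. By the preceding theorem, traversing $L$ and multiplying the connection matrices reproduces — up to the $J$- and $i$-bookkeeping built into $\alpha=iAJ$, $\beta=iRBLJ$, $\gamma=-iLCRJ$ — the product of the honest edge matrices $A,B,C$ interleaved with the turn matrices $L$ and $R$. The entries of $L=\left[\begin{smallmatrix}0&1\\1&1\end{smallmatrix}\right]$ and $R=\left[\begin{smallmatrix}1&1\\1&0\end{smallmatrix}\right]$ are precisely the indicators recording which incoming matched edge may be followed by which outgoing one, so expanding the matrix product and taking the trace sums over exactly the closed admissible lift-paths of $L$, each weighted by its product of $a^{\pm1},b^{\pm1},c^{\pm1}$, i.e.\ by the weight of that lift. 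This is the single-hexagon computation of the worked example — eighteen trace terms for eighteen lifts — carried out around an arbitrary loop.

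The main obstacle is ensuring that the loop product really is a genuine trace of the transfer-matrix string, so that the bookkeeping does no harm. I would verify that the powers of $i$ combine to the correct overall scalar around any (necessarily even-length) loop, and that the $J$'s together with the dangling $L,R$ end-terms of the open-path string telescope when the path closes, leaving exactly $\tr\bigl(\prod_i T_iM_i\bigr)$ with each lift contributing with coefficient $+1$. That this is independent of basepoint is the cyclic invariance of the trace, and independent of traversal direction because $\tr(M)=\tr(M^{-1})$ for $M\in\SL_2(\C)$. With the per-loop identity in hand, multiplying over all loops and doubled edges recovers the fiber sum, so $Sq$ pushes the single dimer measure forward to the double dimer measure; matching the connection of this theorem to the edge directions of the Figure~\ref{single} weighting is then the only remaining bookkeeping.
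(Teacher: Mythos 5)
Your proposal is correct and follows essentially the same route as the paper: the paper in fact states Theorem~\ref{bigtheorem} without a separate proof, relying on the preceding transfer-matrix theorem (the twelve two-step products $\beta\alpha = -RBLA$, etc., and the induction on the length of the matrix string) together with the scalar/connection decomposition of Figure~\ref{preconnection}, which is precisely the per-loop trace identity and doubled-edge cancellation that you make explicit. The one small slip is your claim that the scalar weight is $1$ everywhere --- in this theorem the coarse graph carries the $Q$-power weighting of Figure~\ref{single} --- but that factor is tracked separately by the doubled edges and the heights of the loops and does not affect the argument.
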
 

\begin{corollary}
	Using $\alpha$, $\beta$, and $\gamma$ as described in Theorem \ref{bigtheorem}, we can find the generating function for a given $m \times n \times o$ boxed double dimer plane partition.
\end{corollary}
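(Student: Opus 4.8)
The plan is to read the double dimer generating function directly off the measure-preserving property of Theorem~\ref{bigtheorem}. Since the squish map carries the single dimer model on $H_{2m,2n,2o}$ (with the $A$, $B$, $C$ weights) to the $\SL_2$ double dimer model on $H_{m,n,o}$ (with connection $\alpha$, $\beta$, $\gamma$) without changing the total weight, the two partition functions coincide. Hence the sought double dimer generating function on the smaller box equals the single dimer partition function on the doubled box, which is exactly a generating function for boxed plane partitions in a $2m \times 2n \times 2o$ box.

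First I would make the correspondence explicit. By Theorem~\ref{duplo}, the double dimer configuration $Sq(\mathcal{S})$ is the overlay $\pi_{min} \sqcup \pi_{max}$ of the rounded-down and rounded-up downsamplings of $\mathcal{S}$, and many single boxed plane partitions can share the same image. The measure-preserving property then guarantees that the $\SL_2$ weight of each double dimer configuration---its product of scalar weights times the trace around each closed loop---equals the sum of the single dimer weights of all boxed plane partitions squishing to it. Summing over all configurations, the double dimer partition function on $H_{m,n,o}$ agrees with the single dimer partition function on $H_{2m,2n,2o}$.

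Next I would evaluate that single dimer partition function in closed form. In the unweighted case this is immediate: it is MacMahon's product
\[
\prod_{i=1}^{2m}\prod_{j=1}^{2n}\frac{1-q^{i+j+2o-1}}{1-q^{i+j-1}}
\]
specialized to the $2m \times 2n \times 2o$ box. To incorporate the $\mathbb{Z}_2 \times \mathbb{Z}_2$ periodic weighting of Section~\ref{sec:gf}, I would track the four cube-colors $q$, $r$, $s$, $t$ through the box count, obtaining a refined multivariate MacMahon-type product whose variables are precisely those entering $A$, $B$, $C$, and hence $\alpha$, $\beta$, $\gamma$.

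The main obstacle will be the bookkeeping in the weighted case: one must verify that the scalar weights together with the $\SL_2$ connection reproduce exactly the $2$-periodic cube weighting of~\eqref{box labels}, so that the refined box-counting generating function is indexed over the doubled dimensions and specialized correctly. Once this compatibility is checked, the corollary follows by equating the two partition functions.
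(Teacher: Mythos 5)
Your core argument is the paper's own (implicit) one: the paper states this corollary without proof, treating it as an immediate consequence of the measure-preserving property of Theorem~\ref{bigtheorem}, exactly as you do --- the $\SL_2$ double dimer partition function on $H_{m,n,o}$ with connection $\alpha,\beta,\gamma$ equals the single dimer partition function on $H_{2m,2n,2o}$ with the 2-periodic weights, and the latter is computable (the paper carries this out by direct enumeration in the $2\times2\times2$ and $2\times2\times4$ examples of Section~\ref{rstsection}). One correction, though: your final step overclaims. The paper explicitly notes that these boxed, $\mathbb{Z}_2\times\mathbb{Z}_2$-weighted generating functions ``do not have nice closed-form generating functions akin to MacMahon's generating function''; the refined multivariate product you hope to write down does not exist in the finite-box setting, and the closed form $Z_Q$ of equation~\eqref{benGF} is available only in the limit $m,n,o\to\infty$. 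Since the corollary asserts only that the generating function can be \emph{found}, not that it has a product formula, this does not invalidate your proof --- but you should replace ``evaluate in closed form'' with ``evaluate by finite enumeration or determinantal methods.''
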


\begin{remark}
	As seen in \cite{kenyon:conformal}, we can compute the probability of a given edge being present in the single dimer model by taking the determinant of a certain Kasteleyn matrix. When we use this procedure in conjunction with the squish map, we can now determine the probability that a given edge will appear as a doubled edge in the double dimer model. If said doubled edge is present, then its preimage under the squish map would have two edges mapping to the doubled edge, so the overall probability would be the product of the two individual probabilities that each edge would be present in the single dimer model. 
\end{remark}

Define $Q := qrst$, and $\widetilde{M}(x,y):=M(x,y)M(x^{-1},y)$. The generating function for plane partitions in which $q,r,s,t$ mark the boxes in as in equation \ref{box labels} was computed in \cite{young:thesis} (where it was denoted $Z_{\Z_2 \times \Z_2}$); it is

\begin{align}
Z_Q = M(1,Q)^4
\frac{
\widetilde{M}(rs,Q)
\widetilde{M}(st,Q)
\widetilde{M}(tr,Q)
}{
\widetilde{M}(-r,Q)
\widetilde{M}(-s,Q)
\widetilde{M}(-t,Q)
\widetilde{M}(-rst,Q)
}.
\label{benGF}
\end{align}
Under the specialization $r=s=t=q$ (and hence $Q=q^4$) we recover MacMahon's generating function for plane partitions by a slightly delicate manipulation of formal power series.  In \cite{young:squish} it is observed that under the specialization $r=s=t=-1$, the above generating function specializes to $M(1,-q)$, and this latter statement is proven using the squish map (without the $\SL_2$ double dimer model).

\begin{corollary}[to Theorem \ref{bigtheorem}]
	The partition function for the $\SL_2$ double dimer model is $Z_Q$ from equation \ref{benGF}.
\end{corollary}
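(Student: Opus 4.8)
The plan is to deduce the corollary directly from Theorem~\ref{bigtheorem} together with the already-known evaluation of the single dimer partition function as a plane partition generating function. The logical skeleton is short: a measure-preserving map identifies two partition functions, and one of them is known to equal $Z_Q$. The real work is analytic bookkeeping, not combinatorics.

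First I would record precisely what ``measure preserving'' buys us. By Theorem~\ref{bigtheorem}, with the single dimer weighting feeding into that theorem on $H$ and the connection built from $\alpha,\beta,\gamma$ on $2H$, the squish map $Sq\colon D(H)\to DD(2H)$ satisfies, for every double dimer configuration $D$ in its image,
\[
w_{\SL_2}(D)=\sum_{\substack{S\in D(H)\\ Sq(S)=D}} w(S),
\]
where $w_{\SL_2}(D)$ is the $\SL_2$ double dimer weight (scalar-weight product times the product of loop traces) and $w(S)$ is the ordinary single dimer weight. Since $Sq$ is defined on all of $D(H)$, its fibers partition $D(H)$; summing the displayed identity over all $D$ then gives
\[
\sum_{D\in DD(2H)} w_{\SL_2}(D)=\sum_{S\in D(H)} w(S).
\]
Here one also needs that $Sq$ surjects onto the family of double dimer configurations actually summed in the $\SL_2$ model, i.e.\ those carrying the boundary conditions inherited from $H$, so that no term on the left is omitted; I would treat this surjectivity as part of the content established alongside Theorem~\ref{bigtheorem}. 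Thus the $\SL_2$ double dimer partition function on $2H$ equals the single dimer partition function on $H$.

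Next I would identify the right-hand side with $Z_Q$. Under the folklore bijection between single dimer configurations on the honeycomb graph and plane partitions, the $\mathbb{Z}_2\times\mathbb{Z}_2$-periodic weighting of equation~\eqref{box labels} assigns to a matching the monomial $q^{n_1} r^{n_2} s^{n_3} t^{n_4}$ recording how many boxes of each of the four colors it contains. Hence the single dimer partition function is exactly the four-variable generating function computed in \cite{young:thesis}, namely $Z_Q$ of equation~\eqref{benGF}. Chaining this with the two displayed equalities yields the corollary.

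The main obstacle is analytic rather than combinatorial: both sides are sums over infinite configuration sets on the infinite graph $H$, so the manipulations above are only literally valid after a suitable regularization. I would handle this by first proving the identity on the finite boxed graphs, working with $H_{2x,2y,2z}$ on the fine side and its image $x\times y\times z$ graph on the coarse side, where Theorem~\ref{bigtheorem} and the fiber-partition argument are finite and rigorous, and where the gauge transformation of the arbitrary-weights subsection alters each partition function only by an explicit overall constant. I would then take $x,y,z\to\infty$, checking that the chosen normalization (the constants $k_1,k_2,k_3$ and the relation $p^3=q$) makes the finite partition functions converge to $Z_Q$ as formal power series in $Q$; this is the same delicate formal-power-series manipulation already needed to recover MacMahon's formula under $r=s=t=q$. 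A secondary point is to confirm the dictionary between the matrix parameters $a,b,c$ of Definition~\ref{matrices} and the box weights $q,r,s,t$, so that the single-hexagon trace computed in the Example reproduces the correct $2$-periodic box weights term by term, and so that the connection of Theorem~\ref{bigtheorem} genuinely transports the weighting of equation~\eqref{box labels} through $Sq$.
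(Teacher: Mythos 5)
Your argument is correct and is essentially the reasoning the paper leaves implicit: the corollary is stated without proof because it follows immediately from Theorem~\ref{bigtheorem} (summing the fiberwise measure-preservation identity over all double dimer configurations) together with the identification, cited from \cite{young:thesis}, of the $\mathbb{Z}_2\times\mathbb{Z}_2$-weighted single dimer partition function with $Z_Q$. Your additional care about surjectivity of $Sq$ and about regularizing via finite boxed graphs before passing to the limit fills in details the paper glosses over, but does not change the route.
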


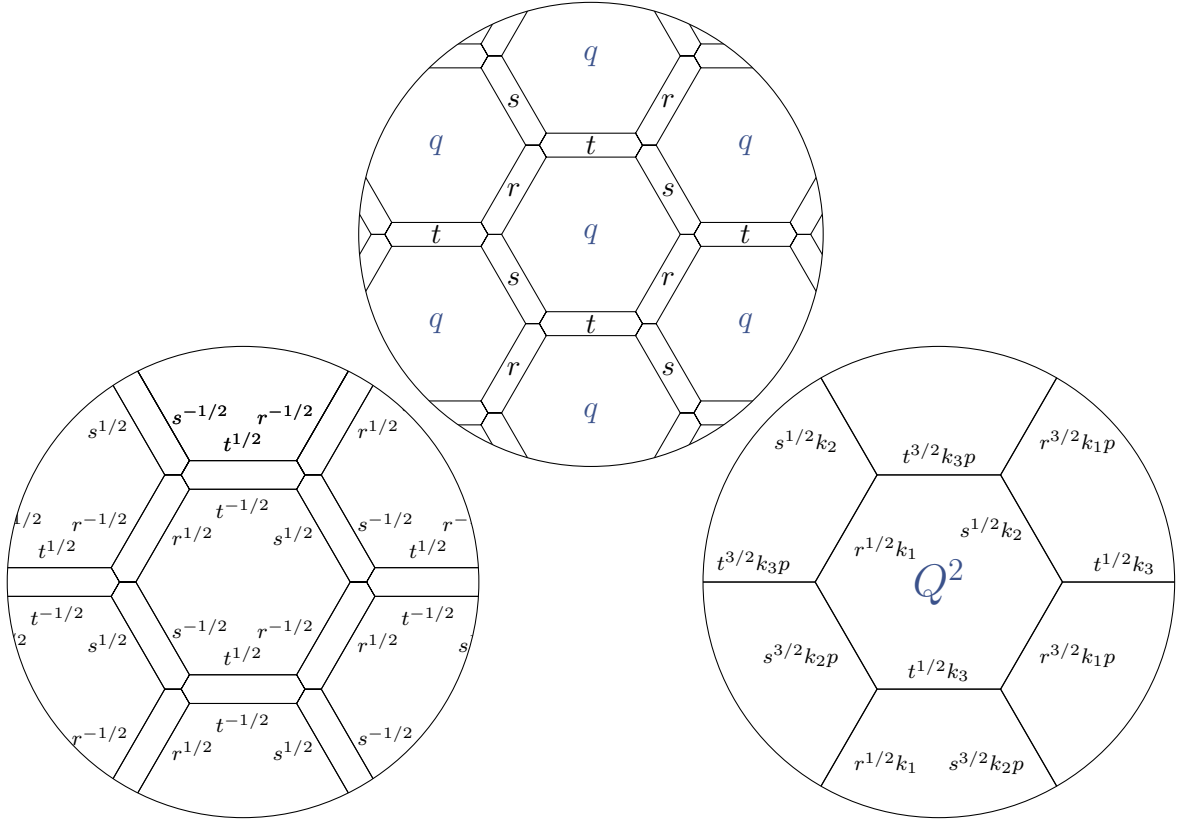
\begin{figure}[ht]	
	\begin{tikzpicture}[scale = 0.925]
		\node at (0,5) {
			\scriptsize
			$$\begin{tikzpicture}[scale = 1.25*0.95]
				\def\hexagon{+(0:1cm) -- +(60:1cm) -- +(120:1cm) -- +(180:1cm) -- +(240:1cm) -- +(300:1cm) -- cycle}
				\def\hexagonweights{+(0:1cm) -- node [anchor = north east]{$sk_2$} +(60:1cm) -- node [anchor = north]{$tpk_3$} +(120:1cm) -- node [anchor = north west]{$rk_1$} +(180:1cm) -- node [anchor = south west]{$spk_2$} +(240:1cm) -- node [anchor = south]{$tk_3$} +(300:1cm) -- node [anchor = south east]{$rpk_1$}cycle;}
				\def \squish {+(0: 1cm) -- 
					++(60: 1cm) -- +(60: .155cm) -- +(240: 0cm) -- 
					++(180:1cm) -- +(120: .155cm) -- +(240: 0cm) -- 
					++(240: 1cm) -- +(180: .155cm) -- + (240: 0cm) -- 
					++(300: 1cm) -- +(240: .155cm) -- +(240: 0cm) -- 
					++(360: 1cm) -- +(300: .155cm) -- +(240: 0cm) -- 
					++(420: 1cm) -- +(360: .155cm) -- +(240: 0cm)}
				
				\clip[draw] (1.73,1) circle (2.6cm);
				
				\draw (1.73, 1) \squish;
				
				\draw (3.46, 2) \squish; %northeast hex
				\def\hexagonweights{+(0:1cm) -- node [anchor = north east]{$p^{-1}k_2$} +(60:1cm) -- node [anchor = north]{$t^2pk_3$} +(120:1cm) -- node [anchor = north west]{$rk_1$} +(180:1cm) -- node [anchor = south west]{$k_2$} +(240:1cm) -- node [anchor = south]{$tk_3$} +(300:1cm) -- node [anchor = south east]{$rpk_1$}cycle;}
				
				\draw (1.73, 3) \squish; %north hex
				\def\hexagonweights{+(0:1cm) -- node [anchor = north east]{$p^{-1}k_2$} +(60:1cm) -- node [anchor = north]{$t^2p^2k_3$} +(120:1cm) -- node [anchor = north west]{$p^{-1}k_1$} +(180:1cm) -- node [anchor = south west]{$k_2$} +(240:1cm) -- node [anchor = south]{$t^2pk_3$} +(300:1cm) -- node [anchor = south east]{$k_1$}cycle;}
				
				\draw (1.73, -1) \squish; %south hex
				\def\hexagonweights{+(0:1cm) -- node [anchor = north east]{$sk_2$} +(60:1cm) -- node [anchor = north]{$k_3$} +(120:1cm) -- node [anchor = north west]{$rk_1$} +(180:1cm) -- node [anchor = south west]{$spk_2$} +(240:1cm) -- node [anchor = south]{$p^{-1}k_3$} +(300:1cm) -- node [anchor = south east]{$rpk_1$}cycle;}
				
				\draw (0,0) \squish; %southwest hex
				\def\hexagonweights{+(0:1cm) -- node [anchor = north east]{$s^2pk_2$} +(60:1cm) -- node [anchor = north]{$k_3$} +(120:1cm) -- node [anchor = north west]{$p^{-1}k_1$} +(180:1cm) -- node [anchor = south west]{$s^2p^2k_2$} +(240:1cm) -- node [anchor = south]{$p^{-1}k_3$} +(300:1cm) -- node [anchor = south east]{$k_1$}cycle;}

				\draw (0, 2) \squish; %northwest hex
				\def\hexagonweights{+(0:1cm) -- node [anchor = north east]{$sk_2$} +(60:1cm) -- node [anchor = north]{$tpk_3$} +(120:1cm) -- node [anchor = north west]{$p^{-1}k_1$} +(180:1cm) -- node [anchor = south west]{$spk_2$} +(240:1cm) -- node [anchor = south]{$tk_3$} +(300:1cm) -- node [anchor = south east]{$k_1$}cycle;}
				
				\draw (-1.73, 1) \squish;
				\draw(3.46, 0) \squish; %southeast hex
				\def\hexagonweights{+(0:1cm) -- node [anchor = north east]{$p^{-1}k_2$} +(60:1cm) -- node [anchor = north]{$k_3$} +(120:1cm) -- node [anchor = north west]{$r^2pk_1$} +(180:1cm) -- node [anchor = south west]{$k_2$} +(240:1cm) -- node [anchor = south]{$p^{-1}k_3$} +(300:1cm) -- node [anchor = south east]{$r^2p^2k_1$}cycle;}
				\draw(0, -2) \squish;
				\draw(0,4) \squish;
				\draw(-1.73, 3) \squish;
				\draw(-1.73, -1) \squish;
				\draw(1.73, 5) \squish;
				\draw(1.73, -3) \squish;
				\draw (3.46, 4) \squish;
				\draw (3.46, -2) \squish;
				\draw (3.46 + 1.73, 1) \squish;
				\draw (3.46 + 1.73, 3) \squish;
				\draw (3.46 + 1.73, -1) \squish;
				
				\normalsize
				\node at (1.73, 2) {$t$};
				\node at (1.73, 0) {$t$};
				\node at (0, 1) {$t$};
				\node at (1.73*2, 1) {$t$};
				\node at (1.73 * 0.5, 0.5) {$s$};
				\node at (1.73 * 0.5, 1.5) {$r$};
				\node at (1.73 * 0.5, 2.5) {$s$};
				\node at (1.73 * 0.5, -0.5) {$r$};
				\node at (1.73 * 0.5 + 1.73, 0.5) {$r$};
				\node at (1.73 * 0.5 + 1.73, 2.5) {$r$};
				\node at (1.73 * 0.5 + 1.73, 1.5) {$s$};
				\node at (1.73 * 0.5 + 1.73, -0.5) {$s$};
				\large
				
				\node[blue] at (1.73, 1) {$q$};
				\node[blue] at (1.73, -1) {$q$};
				\node[blue] at (1.73, 3) {$q$};
				\node[blue] at (0, 0) {$q$};
				\node[blue] at (0, 2) {$q$};
				\node[blue] at (1.73*2, 0) {$q$};
				\node[blue] at (1.73*2, 2) {$q$};

			\end{tikzpicture}$$
			
			\normalsize 
			
		};
		\node at (-5, 0) {
			\small
			$$\begin{tikzpicture}[scale = 1.5*0.95]
				\def\hexagon{+(0:1cm) -- +(60:1cm) -- +(120:1cm) -- +(180:1cm) -- +(240:1cm) -- +(300:1cm) -- cycle}
				\def\hexagonweights{+(0:1cm) -- node [anchor = east]{\scriptsize$s^{1/2}$} +(60:1cm) -- node [anchor = north]{\scriptsize$t^{-1/2}$} +(120:1cm) -- node [anchor = west]{\scriptsize$r^{1/2}$} +(180:1cm) -- node [anchor = west]{\scriptsize$s^{-1/2}$} +(240:1cm) -- node [anchor = south]{\scriptsize$t^{1/2}$} +(300:1cm) -- node [anchor = east]{\scriptsize$r^{-1/2}$}cycle;}
				\def \squish {+(0: 1cm) -- 
					++(60: 1cm) -- +(60: .155cm) -- +(240: 0cm) -- 
					++(180:1cm) -- +(120: .155cm) -- +(240: 0cm) -- 
					++(240: 1cm) -- +(180: .155cm) -- + (240: 0cm) -- 
					++(300: 1cm) -- +(240: .155cm) -- +(240: 0cm) -- 
					++(360: 1cm) -- +(300: .155cm) -- +(240: 0cm) -- 
					++(420: 1cm) -- +(360: .155cm) -- +(240: 0cm)}
				
				\clip[draw] (1.73,1) circle (2.2cm);
				
				\draw (1.73, 1) \squish;
				\draw (1.73, 1) \hexagonweights;
				\draw (3.46, 2) \squish;
				\draw (3.46, 2) \hexagonweights;
				\draw (1.73, 3) \squish;
				\draw (1.73, 3) \hexagonweights;
				\draw (1.73, -1) \squish;
				\draw (1.73, -1) \hexagonweights;
				\draw (0,0) \squish;
				\draw (0,0) \hexagonweights;
				\draw (1.73, 3) \squish;
				\draw (1.73, 3) \hexagonweights;
				\draw (0, 2) \squish;
				\draw (0, 2) \hexagonweights;
				\draw (-1.73, 1) \squish;
				\draw(3.46, 0) \squish;
				\draw(3.46, 0) \hexagonweights;
				\draw(0, -2) \squish;
				\draw(0,4) \squish;
				\draw(0,4) \hexagonweights;
				\draw(-1.73, 3) \squish;
				\draw(-1.73, -1) \squish;
				\draw(1.73, 5) \squish;
				\draw(1.73, -3) \squish;
				\draw (3.46, 4) \squish;
				\draw (3.46, -2) \squish;
				\draw (3.46 + 1.73, 1) \squish;
				\draw (3.46 + 1.73, 3) \squish;
				\draw (3.46 + 1.73, -1) \squish;
				
				\iffalse
				\foreach \x in {-2,...,2}
				\foreach \y\ in {-2,...,2}{%
					\draw  (2 * 1.73*\x,2 * \y) \hexagonweights;
				}

				\foreach \x in {-2,...,2}
				\foreach \y\ in {-2,...,2}{%
					\draw (2 * 1.73*\x + 1.73,2 * \y + 1) \hexagonweights;
				}
				\fi

			\end{tikzpicture}$$
			
			\normalsize 
			
		};
		\node at (5, 0) {
			\scriptsize
			$$\begin{tikzpicture}[scale = 1.5*0.95]
				\def\hexagon{+(0:1.155cm) -- +(60:1.155cm) -- +(120:1.155cm) -- +(180:1.155cm) -- +(240:1.155cm) -- +(300:1.155cm) -- cycle}
				\def\hexagonweights{+(0:1.155cm) -- node [anchor = north east]{$s^{1/2}k_2$} +(60:1.155cm) -- node [anchor = north]{$t^{3/2}k_3 p$} +(120:1.155cm) -- node [anchor = north west]{$r^{1/2}k_1$} +(180:1.155cm) -- node [anchor = south west]{$s^{3/2}k_2 p$} +(240:1.155cm) -- node [anchor = south]{$t^{1/2}k_3$} +(300:1.155cm) -- node [anchor = south east]{$r^{3/2}k_1 p$}cycle;}

				\clip[draw] (1.73,1) circle (2.2cm);

				\def\hexagonweights{+(0:1.155cm) -- node [anchor = east]{$s^{1/2}k_2$} +(60:1.155cm) -- node [anchor = north]{} +(120:1.155cm) -- node [anchor = north west]{$r^{1/2}k_1$} +(180:1.155cm) -- node [anchor = south west]{} +(240:1.155cm) -- node [anchor = south]{$t^{1/2}k_3$} +(300:1.155cm) -- node [anchor = south east]{}cycle;}
				\draw (1.73, 1) \hexagonweights;
				
				\def\hexagonweights{+(0:1.155cm) -- node [anchor = north east]{} +(60:1.155cm) -- node [anchor = north]{$t^{3/2}k_3 p$} +(120:1.155cm) -- node [anchor = north west]{$r^{3/2}k_1p$} +(180:1.155cm) -- node [anchor = south west]{} +(240:1.155cm) -- node [anchor = south]{$t^{1/2}k_3$} +(300:1.155cm) -- node [anchor = south east]{}cycle;}
				\draw (3.46, 2) \hexagonweights;
				
				\def\hexagonweights{+(0:1.155cm) -- node [anchor = north east]{} +(60:1.155cm) -- node [anchor = north]{} +(120:1.155cm) -- node [anchor = north west]{$r^{3/2}k_1 p$} +(180:1.155cm) -- node [anchor = south west]{} +(240:1.155cm) -- node [anchor = south]{$t^{1/2}k_3$} +(300:1.155cm) -- node [anchor = south east]{}cycle;}
				\draw(3.46, 0) \hexagonweights;
				
				\def\hexagonweights{+(0:1.155cm) -- node [anchor = north east]{$s^{1/2}k_2$} +(60:1.155cm) -- node [anchor = north]{} +(120:1.155cm) -- node [anchor = north west]{$r^{1/2}k_1$} +(180:1.155cm) -- node [anchor = south west]{} +(240:1.155cm) -- node [anchor = south]{$t^{3/2}k_3 p$} +(300:1.155cm) -- node [anchor = south east]{}cycle;}
				\draw (1.73, 3) \hexagonweights;
				
				\def\hexagonweights{+(0:1.155cm) -- node [anchor = north east]{$s^{3/2}k_2p$} +(60:1.155cm) -- node [anchor = north]{} +(120:1.155cm) -- node [anchor = north west]{$r^{1/2}k_1$} +(180:1.155cm) -- node [anchor = south west]{$s^{3/2}k_2 p$} +(240:1.155cm) -- node [anchor = south]{} +(300:1.155cm) -- node [anchor = south east]{$r^{3/2}k_1 p$}cycle;}
				\draw (1.73, -1) \hexagonweights;

				\def\hexagonweights{+(0:1.155cm) -- node [anchor = north east]{$s^{3/2}k_2 p$} +(60:1.155cm) -- node [anchor = north]{} +(120:1.155cm) -- node [anchor = north west]{} +(180:1.155cm) -- node [anchor = south west]{$s^{3/2}k_2 p$} +(240:1.155cm) -- node [anchor = south]{$t^{1/2}k_3$} +(300:1.155cm) -- node [anchor = south east]{}cycle;}
				\draw (0,0) \hexagonweights;

				\def\hexagonweights{+(0:1.155cm) -- node [anchor = north east]{$s^{1/2}k_2$} +(60:1.155cm) -- node [anchor = north]{$t^{3/2}k_3 p$} +(120:1.155cm) -- node [anchor = north west]{} +(180:1.155cm) -- node [anchor = south west]{} +(240:1.155cm) -- node [anchor = south]{$t^{3/2}k_3p$} +(300:1.155cm) -- node [anchor = south east]{}cycle;}
				\draw (0, 2) \hexagonweights;

				\node[blue] at (1.73, 1) {\LARGE$Q^2$};
				
			\end{tikzpicture}$$
			
			\normalsize 
			
		};
	\end{tikzpicture}

	\caption{Top: 2-periodic weights for the single dimer model. Bottom: $\SL_2$ connection and scalar weight for the corresponding double dimer model.}
	\label{preconnection}
	\label{sqrtweights}
\end{figure}

\section{Examples}
\label{rstsection}
We now work out two examples in which we squish boxed $2\times2\times2$ and $2 \times 2 \times 4$ plane partitions, with boxes weighted by $w(i,j,k)$.  These do not have nice closed-form generating functions akin to MacMahon's generating function, but we can nonetheless evaluate them with our techniques.  We also investigate the specializations of our formula at primitive roots of unity, recovering and extending the results of~\cite{young:thesis}, as well as a new conjecture.

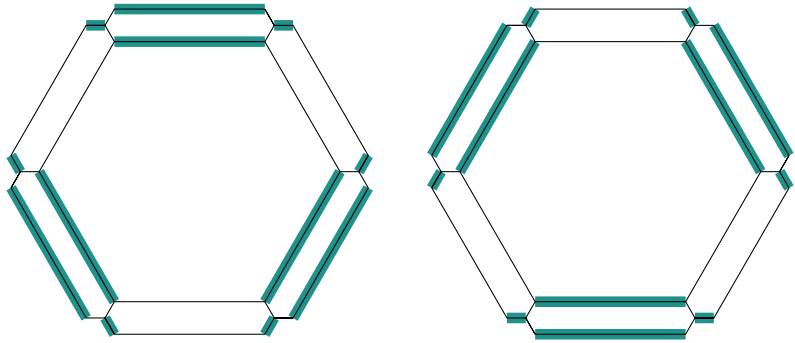
\begin{figure}[ht]
	
	\small
	$$\begin{tikzpicture}[scale = 1.0]
		\def\hexagon{+(0:1cm) -- +(60:1cm) -- +(120:1cm) -- +(180:1cm) -- +(240:1cm) -- +(300:1cm) -- cycle}
		\def\hexagonweights{+(0:1cm) -- node [anchor = east]{$c$} +(60:1cm) -- node [anchor = north]{$a^{-1}$} +(120:1cm) -- node [anchor = west]{$b^{-1}$} +(180:1cm) -- node [anchor = west]{$c^{-1}$} +(240:1cm) -- node [anchor = south]{$a$} +(300:1cm) -- node [anchor = east]{$b$}cycle;}
		\def \squish {+(0: 1cm) -- 
			++(60: 1cm) -- +(60: .155cm) -- +(240: 0cm) -- 
			++(180: 1cm) -- +(120: .155cm) -- +(240: 0cm) -- 
			++(240: 1cm) -- +(180: .155cm) -- + (240: 0cm) -- 
			++(300: 1cm) -- +(240: .155cm) -- +(240: 0cm) -- 
			++(360: 1cm) -- +(300: .155cm) -- +(240: 0cm) -- 
			++(420: 1cm) -- +(360: .155cm) -- +(240: 0cm)}

		\draw [line width = 4pt, green] (0, 2*1.73)  (2, 2*1.73) -- (3, 3*1.73)  (2, 4*1.73) -- (0, 4*1.73)  (-1, 3*1.73) -- (0, 2*1.73); 
		
		\draw [line width = 4pt, green] (-1, 3*1.73) ++(180: 0.25cm)  +(240: 0.25cm) +(240: 0cm) -- ++(120: 0.25cm)  ++(60: 2cm);
		\draw [line width = 4pt, green] (0, 2*1.73)  ++(240: 0.25cm) -- +(300: 0.25cm) +(240: 0cm)  ++(180: 0.25cm) -- ++(120: 2cm);
		\draw [line width = 4pt, green] (2, 2*1.73)  ++(300: 0.25cm)  +(0: 0.25cm)   +(240: 0cm) -- ++(240: 0.25cm)  ++(180: 2cm);
		\draw [line width = 4pt, green] (3, 3*1.73)  ++(0: 0.25cm)   -- +(60: 0.25cm)  +(240: 0cm)  ++(300: 0.25cm) -- ++(240: 2cm);
		\draw [line width = 4pt, green] (2, 4*1.73)  ++(60: 0.25cm)   +(120: 0.25cm) +(240: 0cm) -- ++(0: 0.25cm)  ++(300: 2cm);
		\draw [line width = 4pt, green] (0, 4*1.73)  ++(120: 0.25cm) -- +(180: 0.25cm) +(240: 0cm)  ++(60: 0.25cm) -- ++(0: 2cm);

		\draw (0, 2*1.73) -- (2, 2*1.73) -- (3, 3*1.73) -- (2, 4*1.73) -- (0, 4*1.73) -- (-1, 3*1.73) -- cycle; 
		
		\draw (-1, 3*1.73) -- ++(180: 0.25cm) -- +(240: 0.25cm) -- +(240: 0cm) -- ++(120: 0.25cm) -- ++(60: 2cm);
		\draw (0, 2*1.73) -- ++(240: 0.25cm) -- +(300: 0.25cm) -- +(240: 0cm) -- ++(180: 0.25cm) -- ++(120: 2cm);
		\draw (2, 2*1.73) -- ++(300: 0.25cm) -- +(0: 0.25cm) -- +(240: 0cm) -- ++(240: 0.25cm) -- ++(180: 2cm);
		\draw (3, 3*1.73) -- ++(0: 0.25cm) -- +(60: 0.25cm) -- +(240: 0cm) -- ++(300: 0.25cm) -- ++(240: 2cm);
		\draw (2, 4*1.73) -- ++(60: 0.25cm) -- +(120: 0.25cm) -- +(240: 0cm) -- ++(0: 0.25cm) -- ++(300: 2cm);
		\draw (0, 4*1.73) -- ++(120: 0.25cm) -- +(180: 0.25cm) --  +(240: 0cm) -- ++(60: 0.25cm) -- ++(0: 2cm);

	\end{tikzpicture}
	\qquad 
	\begin{tikzpicture}[scale = 1.0]
		\def\hexagon{+(0:1cm) -- +(60:1cm) -- +(120:1cm) -- +(180:1cm) -- +(240:1cm) -- +(300:1cm) -- cycle}
		\def\hexagonweights{+(0:1cm) -- node [anchor = east]{$c$} +(60:1cm) -- node [anchor = north]{$a^{-1}$} +(120:1cm) -- node [anchor = west]{$b^{-1}$} +(180:1cm) -- node [anchor = west]{$c^{-1}$} +(240:1cm) -- node [anchor = south]{$a$} +(300:1cm) -- node [anchor = east]{$b$}cycle;}
		\def \squish {+(0: 1cm) -- 
			++(60: 1cm) -- +(60: .155cm) -- +(240: 0cm) -- 
			++(180: 1cm) -- +(120: .155cm) -- +(240: 0cm) -- 
			++(240: 1cm) -- +(180: .155cm) -- + (240: 0cm) -- 
			++(300: 1cm) -- +(240: .155cm) -- +(240: 0cm) -- 
			++(360: 1cm) -- +(300: .155cm) -- +(240: 0cm) -- 
			++(420: 1cm) -- +(360: .155cm) -- +(240: 0cm)}

		\draw [line width = 4pt, green] (0, 2*1.73) -- (2, 2*1.73)  (3, 3*1.73) -- (2, 4*1.73)  (0, 4*1.73) -- (-1, 3*1.73)  (0, 2*1.73); 
		
		\draw [line width = 4pt, green] (-1, 3*1.73) ++(180: 0.25cm) -- +(240: 0.25cm) +(240: 0cm)  ++(120: 0.25cm) -- ++(60: 2cm);
		\draw [line width = 4pt, green] (0, 2*1.73)  ++(240: 0.25cm)  +(300: 0.25cm) +(240: 0cm) -- ++(180: 0.25cm)  ++(120: 2cm);
		\draw [line width = 4pt, green] (2, 2*1.73)  ++(300: 0.25cm) -- +(0: 0.25cm)   +(240: 0cm)  ++(240: 0.25cm) -- ++(180: 2cm);
		\draw [line width = 4pt, green] (3, 3*1.73)  ++(0: 0.25cm)    +(60: 0.25cm)  +(240: 0cm) -- ++(300: 0.25cm)  ++(240: 2cm);
		\draw [line width = 4pt, green] (2, 4*1.73)  ++(60: 0.25cm)  -- +(120: 0.25cm) +(240: 0cm)  ++(0: 0.25cm)  -- ++(300: 2cm);
		\draw [line width = 4pt, green] (0, 4*1.73)  ++(120: 0.25cm)  +(180: 0.25cm) +(240: 0cm) -- ++(60: 0.25cm)  ++(0: 2cm);

		\draw (0, 2*1.73) -- (2, 2*1.73) -- (3, 3*1.73) -- (2, 4*1.73) -- (0, 4*1.73) -- (-1, 3*1.73) -- cycle; 
		
		\draw (-1, 3*1.73) -- ++(180: 0.25cm) -- +(240: 0.25cm) -- +(240: 0cm) -- ++(120: 0.25cm) -- ++(60: 2cm);
		\draw (0, 2*1.73) -- ++(240: 0.25cm) -- +(300: 0.25cm) -- +(240: 0cm) -- ++(180: 0.25cm) -- ++(120: 2cm);
		\draw (2, 2*1.73) -- ++(300: 0.25cm) -- +(0: 0.25cm) -- +(240: 0cm) -- ++(240: 0.25cm) -- ++(180: 2cm);
		\draw (3, 3*1.73) -- ++(0: 0.25cm) -- +(60: 0.25cm) -- +(240: 0cm) -- ++(300: 0.25cm) -- ++(240: 2cm);
		\draw (2, 4*1.73) -- ++(60: 0.25cm) -- +(120: 0.25cm) -- +(240: 0cm) -- ++(0: 0.25cm) -- ++(300: 2cm);
		\draw (0, 4*1.73) -- ++(120: 0.25cm) -- +(180: 0.25cm) --  +(240: 0cm) -- ++(60: 0.25cm) -- ++(0: 2cm);

	\end{tikzpicture}$$
	
	\normalsize 
	
	\caption{The only two configurations of a $2 \times 2 \times 2$ single dimer model that do not squish to a single hexagon loop}
	\label{squishsingle}
\end{figure}

\begin{example}[$2 \times 2 \times 2$]
	Consider the path around a single hexagon that is the result of a particular single dimer configuration sent through the squish map, or a $2 \times 2 \times 2$ boxed plane partition in the double dimer model. The two configurations on $H_{2, 2, 2}$ that do not give rise to a loop under the squish map are pictured in Figure \ref{squishsingle}. The right graphic has a total weight of 1, and the right graphic has a total weight of $Q^2$. Then the remaining 18 partitions are represented by the polynomial $1 + q + qr + qs + qt + qrs + qrt + qst + 4qrst + qr^2st + qrs^2t + qrst^2 + q r^{2} s^{2} t + q r^{2} s t^{2} + q r s^{2} t^{2} + q r^{2} s^{2} t^{2} + q^2r^2s^2t^2$.

	We want to draw a comparison between $Z_Q$ and the matrix model under the squish map, so consider the trace of the product of terms corresponding to the single hexagon loop, $\displaystyle\frac{\eta}{a^{2} b^{2} c^{2}}$, where $\eta := a^{4} b^{4} c^{4} + a^{4} b^{4} c^{2} + a^{4} b^{2} c^{4} + a^{2} b^{4} c^{4} + a^{4} b^{2} c^{2} + a^{2} b^{4} c^{2} + a^{2} b^{2} c^{4} + 4 a^{2} b^{2} c^{2} + a^{2} b^{2} + a^{2} c^{2} + b^{2} c^{2} + a^{2} + b^{2} + c^{2} + 1$. 
	\normalsize Note that this is only the connection in the $\SL_2(\C)$ double dimer model, not also the weights. So let the weight on every horizontal edge be $$Q^{(\text{height of that edge})} = (qa^2b^2c^2)^{(\text{height of that edge})},$$ as in Figure \ref{single}. Then the total contribution (the connection and the weight) of that single hexagon is $$qa^2b^2c^2 \left(\frac{\eta}{a^{2} b^{2} c^{2}}\right)= q\cdot \eta.$$
	
	Now consider a mapping from $Z_Q$ to the matrices, where $a^2 = r, b^2 = s$, and $c^2 = t$. Then $Q = qrst = qa^2b^2c^2$. Then making the replacements in the statement above, we get \small$$q\left(r^2s^2t^2 + r^2s^2t + r^2st^2 + rs^2t^2 + r^2st + rs^2t + rst^2 + 4rst + rs + st + rt + r + s + t + 1\right).$$\normalsize
	
	Then this matches what we got for $Z_Q$ when we include the $1$ and $Q^2$ terms from the two non-loop configurations.  
	\label{singlehexex}
\end{example}
\begin{example}[$2 \times 2 \times 4$]
	
	For another example, we consider the case with two hexagons, or a $2 \times 2 \times 4$ boxed plane partition. Our generating function is 
	\small
	\begin{align*}\begin{split}
			GF_{2 \times 2 \times 4} = &\ \ q^{4} r^{4} s^{4} t^{4} + q^{3} r^{4} s^{4} t^{4} + q^{3} r^{4} s^{4} t^{3} + q^{3} r^{4} s^{3} t^{4} + q^{3} r^{3} s^{4} t^{4} + q^{3} r^{4} s^{3} t^{3} + q^{3} r^{3} s^{4} t^{3}  \\
			&+ q^{2} r^{4} s^{4} t^{3} + q^{3} r^{3} s^{3} t^{4} + q^{2} r^{4} s^{4} t^{2} + 4  q^{3} r^{3} s^{3} t^{3} + q^{2} r^{4} s^{3} t^{3} + q^{2} r^{3} s^{4} t^{3} + q^{3} r^{3} s^{3} t^{2}  \\
			&+ q^{2} r^{4} s^{3} t^{2} + q^{2} r^{3} s^{4} t^{2} + q^{3} r^{3} s^{2} t^{3} + q^{3} r^{2} s^{3} t^{3} + 3  q^{2} r^{3} s^{3} t^{3} + q^{3} r^{3} s^{2} t^{2} + q^{3} r^{2} s^{3} t^{2} \\
			& + 4  q^{2} r^{3} s^{3} t^{2} + q^{3} r^{2} s^{2} t^{3} + 2  q^{2} r^{3} s^{2} t^{3} + 2  q^{2} r^{2} s^{3} t^{3} + q^{2} r^{3} s^{3} t + q^{3} r^{2} s^{2} t^{2} + 3  q^{2} r^{3} s^{2} t^{2} \\
			& + 3  q^{2} r^{2} s^{3} t^{2} + 3  q^{2} r^{2} s^{2} t^{3} + q^{2} r^{3} s^{2} t + q^{2} r^{2} s^{3} t + 9  q^{2} r^{2} s^{2} t^{2} + q^{2} r^{2} s t^{3} + q^{2} r s^{2} t^{3}	\\
			& + 3  q^{2} r^{2} s^{2} t + 3  q^{2} r^{2} s t^{2} + 3  q^{2} r s^{2} t^{2} + q r^{2} s^{2} t^{2} + q^{2} r s t^{3} + 2  q^{2} r^{2} s t + 2  q^{2} r s^{2} t + q r^{2} s^{2} t\\
			& + 4  q^{2} r s t^{2} + q r^{2} s t^{2} + q r s^{2} t^{2} + 3  q^{2} r s t + q r^{2} s t + q r s^{2} t + q^{2} r t^{2} + q^{2} s t^{2} + q r s t^{2}\\
			& + q^{2} r t + q^{2} s t + 4  q r s t + q^{2} t^{2} + q r s + q^{2} t + q r t + q s t + q r + q s + q t + q + 1.
		\end{split}\end{align*}
	\normalsize
	
	This generating function, however, contains all 105 of the terms given by a $2 \times 2 \times 4$ four-colored boxed plane partition. To compare with the matrix version, we need to be careful which terms squish to which configurations. See Figure \ref{twohex}. 
	
	\begin{figure}[ht]
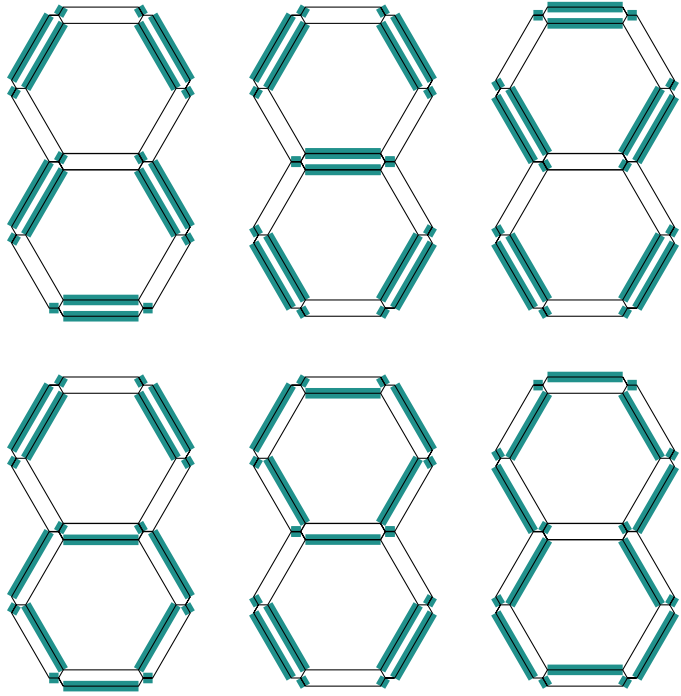

		\small
		$$
		% [inline block 1: 6 envs, 25779 chars -> data_tex | \begin{tikzpicture}[scale = 0.5] 			\def\hexagon{+(0:1cm) -- +(60:1cm) -- +(120:1cm) -- +(180:1cm) -- +(240:1cm) -- +(30...]
$$
		
		\normalsize 
		\caption{A representation of of each possible result of pushing the $2 \times 2 \times 4$ single dimer configuration through the squish map}
		\label{twohex}
	\end{figure}
	
	Note that the top row of configurations in Figure \ref{twohex} has no loops (only doubled edges), so there is no contribution from the $\SL_2(\C)$ connection. So we get a contribution only from the weight, and we get $1$ for the configuration on the left, $Q^2$ for the configuration in the middle, and $Q^4$ for the configuration on the right. Since there is only one configuration in the single dimer model that squishes to each of these they must each account for only one term each in the generating function $GF_{2 \times 2 \times 4}$, namely $1$, $q^2r^2s^2t^2$, and $q^4r^4s^4t^4$. 
	
	Now we examine the second line of Figure \ref{twohex}. From the $2 \times 2\times 2$ (Example \ref{singlehexex}) case, we see that we get 18 terms in the generating function for a single loop. So each of the first two figures in the second line correspond to 18 terms in the generating function, but to determine which terms we need to include the $Q$-weights. So the lower left figure has two horizontal edges, the lower getting a weight of $1$ and the upper getting a weight of $Q$. So this diagram corresponds with the terms $Q\left(\right.$the single hexagon generating function seen previously$) = Q \left(\right.r^2s^2t^2 + r^2s^2t + r^2st^2 + rs^2t^2 + r^2st + rs^2t + rst^2 + 4rst + rs + st + rt + r + s + t + 1\left.\right)$. Then the lower-middle diagram gets a weight of $Q^2 \cdot Q = Q^3$, so its generating function is $Q^3(r^2s^2t^2 + r^2s^2t + r^2st^2 + rs^2t^2 + r^2st + rs^2t + rst^2 + 4rst + rs + st + rt + r + s + t + 1)$. Then we have the $105$ monomials from $GF_{2 \times 2 \times 4}$, and subtract off the three monomials corresponding to the no-loop configurations, and the $2\cdot 18$ monomials just described corresponding to single hexagon loop configurations, and we are left with the following $66$ terms: $(r^{4} s^{4} t^{2} + r^{4} s^{4} t + r^{4} s^{3} t^{2} + r^{3} s^{4} t^{2} + r^{4} s^{3} t + r^{3} s^{4} t + 3 r^{3} s^{3} t^{2} + 4 r^{3} s^{3} t + 2 r^{3} s^{2} t^{2} + 2 r^{2} s^{3} t^{2} + r^{3} s^{3} + 3 r^{3} s^{2} t + 3 r^{2} s^{3} t + 3 r^{2} s^{2} t^{2} + r^{3} s^{2} + r^{2} s^{3} + 8 r^{2} s^{2} t + r^{2} s t^{2} + r s^{2} t^{2} + 3 r^{2} s^{2} + 3 r^{2} s t + 3 r s^{2} t + r s t^{2} + 2 r^{2} s + 2 r s^{2} + 4 r s t + 3 r s + r t + s t + r + s + t + 1) q^{2} t $, which corresponds to a loop around two hexagons, as in the lower right of Figure \ref{twohex}. 
	
	To compare with the corresponding contribution of the $\SL_2$ double dimer model, we first take the matrix product of a monodromy around both hexagons, then take the trace and multiply by the weight $(qa^2b^2c^2)^2$.
	\begin{align*}\begin{split}
		(qa^2b^2c^2)^2 \cdot\left(\frac{1}{a^{4} b^{4} c^{2}}\cdot\left[a^{8} b^{8} c^{4} + a^{8} b^{8} c^{2} + a^{8} b^{6} c^{4} + a^{6} b^{8} c^{4} + a^{8} b^{6} c^{2} + a^{6} b^{8} c^{2} \right.\right.\\
		+ 3 a^{6} b^{6} c^{4} + 4 a^{6} b^{6} c^{2} + 2 a^{6} b^{4} c^{4} + 2 a^{4} b^{6} c^{4} + a^{6} b^{6} + 3 a^{6} b^{4} c^{2} + 3 a^{4} b^{6} c^{2} + 3 a^{4} b^{4} c^{4} \\
		+ a^{6} b^{4} + a^{4} b^{6} + 8 a^{4} b^{4} c^{2} + a^{4} b^{2} c^{4} + a^{2} b^{4} c^{4} + 3 a^{4} b^{4} + 3 a^{4} b^{2} c^{2}\\ 
		\left.\left.+ 3 a^{2} b^{4} c^{2} + a^{2} b^{2} c^{4} + 2 a^{4} b^{2} + 2 a^{2} b^{4} + 4 a^{2} b^{2} c^{2} + 3 a^{2} b^{2} + a^{2} c^{2} + b^{2} c^{2} + a^{2} + b^{2} + c^{2} + 1
		\right]\right)
	\end{split}\end{align*}
	
	Then make the replacements using $a^2 = r$, $b^2 = s$, and $c^2 = t$. The resulting polynomial is
	$(r^{4} s^{4} t^{2} + r^{4} s^{4} t + r^{4} s^{3} t^{2} + r^{3} s^{4} t^{2} + r^{4} s^{3} t + r^{3} s^{4} t + 3  r^{3} s^{3} t^{2} + 4  r^{3} s^{3} t + 2  r^{3} s^{2} t^{2} + 2  r^{2} s^{3} t^{2} + r^{3} s^{3} + 3  r^{3} s^{2} t + 3  r^{2} s^{3} t + 3  r^{2} s^{2} t^{2} + r^{3} s^{2} + r^{2} s^{3} + 8  r^{2} s^{2} t + r^{2} s t^{2} + r s^{2} t^{2} + 3  r^{2} s^{2} + 3  r^{2} s t + 3  r s^{2} t + r s t^{2} + 2  r^{2} s + 2  r s^{2} + 4  r s t + 3  r s + r t + s t + r + s + t + 1) q^{2} t$, which matches exactly with the 66 terms from naively enumerating the plane partitions with the same weights. 
\end{example}

\subsection{Specializations}

In an attempt to find a simple application of these techniques to plane partition enumeration, we have noticed the following curious phenomenon.  Consider the specialization $a = b = c = \omega$, a primitive $n$th root of unity for various $n$, and let $L$ be a loop which appears in the $\SL_2$ double dimer model, after performing the squish map.

\begin{theorem}\label{firstroot}
	If $n = 1$ or $n = 2$, then the contribution of $L$ is the number of double dimer configurations that contribute to that loop. 
\end{theorem}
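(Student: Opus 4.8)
The plan is to reduce Theorem~\ref{firstroot} to the measure-preserving statement of Theorem~\ref{bigtheorem} together with the trivial fact that $\omega^2 = 1$ precisely when $n \in \{1,2\}$. First I would fix notation: by the definition of the $\SL_2$ double dimer model, the contribution of the loop $L$ is the Laurent polynomial $P_L(a,b,c) = \operatorname{Tr}\!\left(\prod_{e \in L}\Gamma(e)\right)$, where $\Gamma \in \{\alpha^{\pm 1}, \beta^{\pm 1}, \gamma^{\pm 1}\}$ is the connection of Definition~\ref{matrices}. Let $\mathcal{S}_L$ denote the set of single dimer configurations on the fine lattice that squish to $L$ (that is, for which the squish map produces the loop $L$ rather than doubled edges there); these are exactly the configurations being counted in the statement---$18$ of them for the single hexagon of Example~\ref{singlehexex}, and $66$ for the two-hexagon loop.

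The core of the argument is the identity $P_L(a,b,c) = \sum_{S \in \mathcal{S}_L} m_S$, where $m_S$ is the box-weight monomial of the configuration $S$ recorded in the colours $r = a^2$, $s = b^2$, $t = c^2$. This is precisely what Theorem~\ref{bigtheorem} provides: since the squish map is measure preserving, the monodromy trace around $L$ enumerates its preimages weighted by their $r,s,t$ box weights, exactly as verified by hand in Example~\ref{singlehexex} (where the trace $\eta/(a^2b^2c^2)$ has $18$ terms counted with multiplicity) and in the $2 \times 2 \times 4$ example (whose monodromy trace has coefficients summing to $66$). Granting this, the specialization is immediate: setting $a = b = c = \omega$ turns each $m_S$ into a monomial in $a^2 = b^2 = c^2 = \omega^2$, hence into a power of $\omega^2$. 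When $n = 1$ we have $\omega = 1$ and when $n = 2$ we have $\omega = -1$, and in both cases $\omega^2 = 1$; hence every $m_S$ evaluates to $1$ and $P_L(\omega,\omega,\omega) = \sum_{S \in \mathcal{S}_L} 1 = |\mathcal{S}_L|$, which is the assertion. (Note $\omega^2 \neq 1$ for any primitive $n$th root with $n \geq 3$, so the hypothesis $n \in \{1,2\}$ is exactly what this evaluation needs.)

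The main obstacle is making the displayed identity $P_L = \sum_{S \in \mathcal{S}_L} m_S$ precise and ensuring the bookkeeping is right. Measure preservation (Theorem~\ref{bigtheorem}) gives that the monodromy trace around $L$ is the $r,s,t$-weighted generating function of $\mathcal{S}_L$; since by Equation~\eqref{box labels} the colours $r,s,t$ are assigned the squares $a^2, b^2, c^2$, each $m_S$ is automatically a monomial in even powers of $a,b,c$, and it is this even-power feature that makes the $\omega = -1$ case succeed. One must also observe that a monomial appearing with coefficient $k$ (such as the $4a^2b^2c^2$ of Example~\ref{singlehexex}) records $k$ distinct preimages of equal weight, so that $P_L(1,1,1)$---the sum of all coefficients---is exactly $|\mathcal{S}_L|$. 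With these points settled, the substitution $a=b=c=\omega$ with $\omega^2=1$ yields the result in one line, so I expect essentially all of the genuine content to reside in the appeal to Theorem~\ref{bigtheorem}, with the root-of-unity evaluation being purely formal.
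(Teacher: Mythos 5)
Your proof is correct, but it takes a genuinely different route from the paper's. The paper argues that at $a=b=c=\pm 1$ the matrices $A,B,C$ become $\pm I$, so the monodromy product around $L$ collapses (after the $i$'s and $J$'s cancel) to the corresponding word in the bare turn matrices $L$ and $R$; since those were constructed as $0$--$1$ transfer matrices recording which of the two parallel preimage edges can follow which, the trace of that word literally counts the closed preimage configurations (e.g.\ $\tr(L^6)=18$ for a hexagon, and the $LLLRRLLRLLLLRRLLRL$ word giving $2306$ for the snake). You instead invoke Theorem~\ref{bigtheorem} to identify $\tr\bigl(\prod_{e\in L}\Gamma(e)\bigr)$ with the $(r,s,t)=(a^2,b^2,c^2)$-weighted generating function of the preimages and then evaluate at $\omega^2=1$. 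Both work; yours is a one-line deduction once the measure-preservation statement is granted (and it cleanly explains why $n\in\{1,2\}$ is the exact hypothesis), while the paper's is self-contained and does not route the counting through the weighted partition function. Two small points you should nail down to make your version airtight: (i) Theorem~\ref{bigtheorem} is stated globally, so you need the (easy but worth stating) factorization of the preimage set and its weight over the loops and doubled edges of $Sq(m)$ to extract the per-loop identity $P_L=\sum_{S\in\mathcal{S}_L}m_S$; and (ii) the claim that every $m_S$ involves only even powers of $a,b,c$ deserves a sentence of justification --- around any closed loop each of the three edge orientations is traversed equally often in each direction, so each of $\alpha^{\pm1},\beta^{\pm1},\gamma^{\pm1}$ occurs an even number of times and every monomial in the expanded trace has even degree in each of $a,b,c$.
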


\begin{example}
	Consider the shape from Figure \ref{snake} (one of the six snake tiles). The path around this tile is $\gamma^{-1}(\beta^{-1}\alpha^{-1}\gamma\alpha^{-1})^2\gamma(\beta\alpha\gamma^{-1}\alpha)^2$, and if we calculate out that matrix product and then specialize to $a = b = c = \omega$, a first or second root of unity, we get the matrix $$\begin{bmatrix}
		1393 & 576 \\
		2208 & 913
	\end{bmatrix}.$$ The trace of this matrix is $2306$, corresponding with the 2306 possible double dimer configurations that squish to that particular snake tile. 
	
	\begin{figure}[ht]
		$$\begin{tikzpicture}[scale = 0.5, x=1cm,y=1cm]
			\def \hex {+(0, 0) -- +(2, 0) -- +(3, 1*1.73) -- +(2, 2*1.73) -- +(0, 2*1.73) -- +(-1, 1*1.73) -- +(0, 0)};
			\draw[very thin] (-2, 0) \hex;
			\draw[very thin] (1, 1*1.73) \hex;
			\draw[very thin] (1, 3*1.73) \hex;
			\draw[very thin] (4, 4*1.73) \hex;
			\draw[color = blue, line width = 2pt] (0, 0) -- ++(-2, 0) -- ++(-1, 1.73) -- ++(1, 1.73) -- ++(2, 0) -- ++(1, 1.73) -- ++(-1, 1.73) -- ++(1, 1.73) -- ++(2, 0) -- ++(1, 1.73) -- ++(2, 0) -- ++(1, -1.73) -- ++(-1, -1.73) -- ++(-2, 0) -- ++(-1, -1.73) -- ++(1, -1.73) -- ++(-1, -1.73) -- ++(-2, 0) -- cycle;
		\end{tikzpicture}$$

		\caption{$\gamma^{-1}(\beta^{-1}\alpha^{-1}\gamma\alpha^{-1})^2\gamma(\beta\alpha\gamma^{-1}\alpha)^2$}
		\label{snake}
	\end{figure}
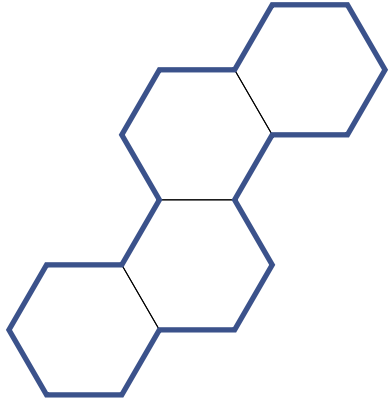
	\label{fourthroot}
\end{example}

\begin{proof}
	The first and second roots of unity case essentially acts as a reduction from the $\alpha$, $\beta$, and $\gamma$ version of the matrices to the original $L$ and $R$ vertex turn matrices. If we calculate the path around a shape as \emph{turns} instead of using edge connections $\alpha$, $\beta$, and $\gamma$, then we get the same resulting trace of the matrix product. In example \ref{fourthroot}, we would have the turn sequence (reading right-to-left since it corresponds to a path) $LLLRRLLRLLLLRRLLRL$, and if we use the originally-defined $L$ and $R$ matrices, we would get $\begin{bmatrix}337 & 1152\\
		576 & 1969\end{bmatrix}$, which has a trace of 2306, as in the example. 
\end{proof}

\begin{theorem} If $n=4$ then the contribution of $L$ is 2.
\end{theorem}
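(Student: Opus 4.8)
The plan is to show that at a primitive fourth root of unity the three connection matrices all degenerate to the \emph{same} scalar matrix $\pm I$, so that the monodromy around any loop is $\pm I$ and its trace is pinned down by the parity of the loop length.

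First I would record that for $\omega$ a primitive fourth root of unity we have $\omega^2=-1$, hence $\omega^{-1}=-\omega$. Substituting $a=b=c=\omega$ into the diagonal edge-weight matrices yields $A=C=\omega J$ and $B=-\omega J$, where $J=\left[\begin{smallmatrix}1&0\\0&-1\end{smallmatrix}\right]$ as in Definition \ref{matrices}. Then, using $J^2=I$ together with the two elementary identities $RJLJ=LJRJ=-I$ (a direct computation from $L=\left[\begin{smallmatrix}0&1\\1&1\end{smallmatrix}\right]$ and $R=\left[\begin{smallmatrix}1&1\\1&0\end{smallmatrix}\right]$), I would evaluate the connection of Definition \ref{matrices}:
\[
\alpha=iAJ=i\omega J^2=i\omega I,\qquad
\beta=iRBLJ=-i\omega\,RJLJ=i\omega I,\qquad
\gamma=-iLCRJ=-i\omega\,LJRJ=i\omega I.
\]
Thus $\alpha=\beta=\gamma=i\omega I$, a scalar matrix equal to $+I$ when $\omega=-i$ and to $-I$ when $\omega=i$; in particular each inverse $\alpha^{-1},\beta^{-1},\gamma^{-1}$ equals the same scalar $i\omega I$, since $\pm I$ is its own inverse.

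Finally, the contribution of a loop $L$ is the trace of the product of connection matrices read around $L$. If $L$ has $\ell$ edges then, independent of traversal direction, this product is $(i\omega)^{\ell} I$. Since $(i\omega)^2=i^2\omega^2=1$ and the honeycomb graph is bipartite — so every closed loop has even length $\ell$ — we obtain $(i\omega)^{\ell}=1$, whence the product is $I$ and its trace is $\tr I=2$.

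The argument is short once the collapse to scalar matrices is noticed; this mirrors the reduction underlying Theorem \ref{firstroot}, except that here the surviving object is the scalar $\pm I$ rather than the $L,R$ turn matrices. The only genuine content is the parity step: bipartiteness of $H$ forces $\ell$ to be even, which is exactly what kills the residual sign $(i\omega)^{\ell}$ in the case $\omega=i$. I expect verifying $RJLJ=LJRJ=-I$ and confirming that loops in the double dimer model are honest even cycles to be the only places requiring any care.
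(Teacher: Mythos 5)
Your proof is correct and follows essentially the same route as the paper's: the connection matrices collapse to a scalar multiple of the identity at a primitive fourth root of unity, and the even length of any loop in the bipartite honeycomb graph kills the residual sign, leaving monodromy $I$ with trace $2$. Your version is merely more explicit, verifying $RJLJ=LJRJ=-I$ and tracking both choices $\omega=\pm i$ where the paper simply asserts $\alpha=\beta=\gamma=-I$.
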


\begin{proof}
Note that this is the same contribution as if the connection were trivial (the matrix associated to every edge of $G$ being identity matrix). When $n = 4$, then each matrix $\alpha$, $\beta$, and $\gamma$ is equivalent to  $-I$, where $I$ is the $2 \times 2$ identity matrix. Then since any possible loop in the graph must have even length, the resulting product around a loop is $I$.
\end{proof}

\begin{theorem} \label{eighthroot}
	If $n = 8$, then the result of multiplying the connection around $L$ is $\pm \begin{bmatrix} 1 & 0 \\ 0 & 1 \end{bmatrix}$, where the sign tracks the parity of the number of hexagons contained in $L$. Note that this is still true regardless of our choice of basepoint. 
\end{theorem}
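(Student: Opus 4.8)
The plan is to reduce the statement for an arbitrary loop $L$ to the single‑hexagon case, which is already essentially computed in the example following Definition~\ref{matrices}, and then to propagate it using the fact that $-I$ is central in $\SL_2(\C)$. Write $\rho(P)$ for the ordered product of the edge matrices $\alpha^{\pm1},\beta^{\pm1},\gamma^{\pm1}$ read off along a path $P$ in $2H$, so that the quantity in the theorem is $\rho(L)$. The only structural properties I will use are that $\rho$ is multiplicative under concatenation, $\rho(P_1P_2)=\rho(P_1)\rho(P_2)$, and that reversing an edge inverts its matrix; equivalently, $\rho$ is a homomorphism out of the free group $\pi_1(\Sigma,p)$ of the $1$-skeleton $\Sigma$ of $2H$.

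First I would pin down the monodromy around one hexagon. The $2\times2$ matrix $M$ for the word $\gamma^{-1}\beta^{-1}\alpha^{-1}\gamma\beta\alpha$ is displayed explicitly in the example after Definition~\ref{matrices}. Setting $a=b=c=\omega$ with $\omega$ a primitive eighth root of unity, so that $\zeta:=\omega^2\in\{i,-i\}$ is a primitive fourth root of unity, every entry specializes cleanly: the two off-diagonal entries each carry the factor $a^2b^2c^2+a^2b^2+a^2+1=\zeta^3+\zeta^2+\zeta+1=\tfrac{\zeta^4-1}{\zeta-1}=0$ (and its $c$-analogue for the other entry), while the diagonal entries collapse to $\zeta^3+2\zeta^2+\zeta+1=-1$. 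Hence $M=-I$ on the nose, not merely up to trace. Because the connection of Figure~\ref{newweights} is assigned by edge direction and is therefore invariant under the translations of $2H$, the monodromy around every hexagonal face is this same word up to cyclic rotation (change of basepoint on the face) and inversion (change of orientation); since $-I$ is fixed by conjugation and inversion, \emph{every} single‑hexagon monodromy equals $-I$.

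Next I would assemble a general loop. Let $L$ be based at $p$, let $D$ be the disk it bounds, and let $f_1,\dots,f_h$ be the hexagonal faces inside $D$. The key combinatorial input is the van Kampen identity: in the \emph{free} group $\pi_1(\Sigma,p)$ one has $L=\prod_{j} g_j\,(\partial f_j)\,g_j^{-1}$ for suitable connecting words $g_j=\rho$-lifts of paths $p\to\partial f_j$. This is a genuine free‑group equality, proved by peeling off one boundary-adjacent hexagon at a time: the shared edge between a boundary hexagon and the rest of $L$ appears as $e\,e^{-1}$ and literally cancels, reducing $h$ by one. Applying the homomorphism $\rho$ and using multiplicativity gives $\rho(L)=\prod_{j}\rho(g_j)\,\rho(\partial f_j)\,\rho(g_j)^{-1}=\prod_{j}\rho(g_j)(-I)\rho(g_j)^{-1}$. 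Here centrality of $-I$ is decisive: each conjugation is trivial, so $\rho(L)=(-I)^h=(-1)^hI$, exactly the asserted sign. Basepoint independence is then automatic, since changing the basepoint conjugates $\rho(L)$ and $(-1)^hI$ is conjugation-invariant.

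The hard part will be making the decomposition fully rigorous for the loops that actually occur. Double‑dimer loops need not be simple and may nest or wind, so ``the $h$ hexagons contained in $L$'' must be defined with care (by winding number, i.e.\ counting bounded complementary faces with multiplicity), and the peeling induction must be run in $2H$ viewed as a planar CW complex; with multiplicity the same edge‑cancellation argument still yields $L=\prod g_j(\partial f_j)g_j^{-1}$ and hence $(-1)^h I$. Everything else—the single‑hexagon evaluation and the vanishing of the conjugations—is routine once centrality of $-I$ is in hand. This centrality is precisely the feature that makes $n=8$ work and that fails at $n=1,2$, where the face monodromies are the noncentral $L,R$-words of Theorem~\ref{firstroot} (their traces being the large configuration counts), so no collapse occurs; it sits one level deeper than the $n=4$ case, where already the individual edge matrices are central.
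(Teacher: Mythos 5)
Your proof is correct and follows essentially the same route as the paper's: establish that the monodromy of a single hexagon specializes to $-I$, and then peel off hexagons one at a time (your van Kampen product of conjugates $\prod_j g_j(\partial f_j)g_j^{-1}$ is exactly that induction in disguise), using the centrality of $-I$ to discard the conjugating words and the choice of basepoint. You are somewhat more careful than the paper --- you actually verify the base case by checking the off-diagonal entries vanish via $\zeta^3+\zeta^2+\zeta+1=0$ rather than asserting it, and you flag the need to treat non-simple loops via winding numbers --- but the underlying argument is identical.
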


\begin{proof}
	The trace of a loop around a single hexagon when specialized to eighth roots of unity is $\begin{bmatrix}-1 & 0 \\ 0 & -1\end{bmatrix} = -1\cdot\begin{bmatrix}1 & 0 \\ 0 & 1\end{bmatrix} = -I$. Then assume that the path around a region containing $n$ hexagons is $(-1)^n\cdot I$. 
	
	Consider a region $R$ with $n$ hexagons, as in our assumption above. Attach one additional hexagon on the boundary of $R$ in a way that does not affect the simply-connectedness of $R$, and call this new region $R'$. Choose your basepoint to be the spot right after the added hexagon. Then the boundary of $R$ is the same until the new hexagon. Upon reaching the new hexagon, take the old path of $R$, then loop all the way around the new hexagon (still in the counterclockwise direction). Note that this will traverse the most recent one to five edges from $R$, but now in the opposite direction. Finally, the boundary of $R'$ also includes the boundary of the new hexagon. So our whole path is that of $R \cdot \{\text{path around a single hexagon}\}$, which is $(-1)^n\cdot I \cdot (-1)\cdot I = (-1)^{n+1}\cdot I$.
\end{proof}

For the next conjecture we need some terminology from Conway-Lagarias~\cite{conway-lagarias} about tiling regions in $H$ with tiles made of unions of hexagons.  A \emph{bone} is the union of three collinear adjacent hexagons in $H$; a \emph{stone} is the union of three hexagons in $H$ which all share a common vertex.  A \emph{signed tiling of $L$} is a collection of tiles, each with a weight of $+1$ or $-1$, covering $L$ in such a way that the total contribution at each hexagon inside $L$ is 1, and the total contribution of each hexagon outside $L$ is zero.  We also define one more tile, the \emph{snake}, which is a union of four hexagons in an ``S" shape (see Figure~\ref{snake}).

\begin{conjecture} 
\label{conj:stone bone snake}If $n=3$ or $n = 6$, then the contribution of $L$ is 0 unless there exists a signed tiling of $L$ by stones, bones and snakes -- in which case, the contribution is $(-1)^s \cdot I$, with $s$ being the number of stones used in the signed tiling.
\end{conjecture}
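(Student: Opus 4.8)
The plan is to read the monodromy of $L$ as the image of the boundary word of the enclosed region $R_L$ under the representation $\rho$ generated by $\alpha,\beta,\gamma$ (specialized at $a=b=c=\omega$), and then to match this image against the Conway--Lagarias signed-tiling obstruction. First I would record a convenient normal form. In both cases $n=3$ and $n=6$ one has $a^2=b^2=c^2=u$ with $u$ a \emph{primitive cube root of unity} (for $n=3$, $u=\omega^{-1}$; for $n=6$, $u=\omega^{2}$), while the shear entries $-ic-\tfrac{i}{c}$ in $\beta,\gamma$ only change sign between the two cases. Since the target value $(-1)^sI$ is central, it is insensitive both to that sign and to conjugation, so it suffices to analyze $\rho$ at a single primitive cube root of unity. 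A direct computation — already visible in the single-hexagon trace $u^{3}+3u^{2}+3u+3u^{-1}+3u^{-2}+u^{-3}+4$, which vanishes at $u=\zeta_3$ — shows that the monodromy $h=\gamma^{-1}\beta^{-1}\alpha^{-1}\gamma\beta\alpha$ around one hexagon is \emph{traceless}, hence non-central. Thus the inductive ``count the hexagons'' argument that proved Theorem~\ref{eighthroot} for $n=8$ must fail here, and genuinely two-dimensional tiling data is forced to appear.

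The key computational step is to verify that the three tiles have \emph{central} monodromy: I would compute $\rho(\partial S)=-I$ for a stone $S$, and $\rho(\partial B)=\rho(\partial K)=I$ for a bone $B$ and a snake $K$. Each is a finite product of the specialized matrices $\alpha^{\pm1},\beta^{\pm1},\gamma^{\pm1}$ taken along the tile boundary, so the check is a routine (if lengthy) calculation; because the outputs are scalar they are automatically independent of basepoint and orientation, which is what makes the statement well posed regardless of how one traverses $\partial L$.

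With centrality in hand, the direction ``$R_L$ signed-tileable $\Rightarrow\rho(\partial L)=(-1)^sI$'' follows from a flat-connection argument. Build the $2$-complex $X$ whose $2$-cells are all placements of stones, bones and snakes; by the previous step the curvature of $\rho$ around every $2$-cell of $X$ lies in the center $\{\pm I\}$, so the induced $\mathrm{PSL}_2(\C)$ representation $\bar\rho$ is flat on $X$ and kills every tile boundary. A signed tiling exhibits $\partial L$ as the boundary of the $2$-chain $\sum_j \epsilon_j T_j$, so $\bar\rho(\partial L)=1$ and the $\SL_2$ lift equals the central product $\rho(\partial L)=\prod_j \rho(\partial T_j)^{\epsilon_j}=(-1)^{s}I$; the factors may be multiplied in any order since each is central, and the signs $\epsilon_j$ on the stones are irrelevant because $(-I)^{\pm1}=-I$.

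The remaining and genuinely hard direction is the converse: if $R_L$ is \emph{not} signed-tileable, then $\tr\rho(\partial L)=0$. Equivalently one must show $\rho(\partial L)$ is always either $\pm I$ or traceless — that is, $\rho(\partial L)^2\in\{\pm I\}$ for every region — with the traceless case occurring exactly on the non-tileable classes. The natural route is to identify the finite image $\bar\Gamma=\langle\alpha,\beta,\gamma\rangle/\{\pm I\}\le\mathrm{PSL}_2(\C)$ explicitly, show that the boundary-word map factors through the Conway--Lagarias signed-tiling group of $\{$stone, bone, snake$\}$ (which is killed in $\bar\Gamma$ by the central-curvature step), and then verify that every nontrivial class in the resulting quotient is represented by an order-two element of $\mathrm{PSL}_2(\C)$, i.e.\ a traceless element of $\SL_2(\C)$. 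This last numerical coincidence — that non-tileability forces an honest involution rather than an element of higher order — is precisely what is special to $u=\zeta_3$, and hence to $n\in\{3,6\}$. Establishing it, together with proving that the signed-tiling obstruction is the \emph{only} obstruction that $\bar\Gamma$ detects, is the main obstacle and the reason the statement is still only a conjecture.
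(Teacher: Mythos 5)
This statement is a \emph{conjecture} in the paper, and the paper does not prove it: it only (i) computes that at $n=3,6$ the monodromy of a bone or snake is $I$ and of a stone is $-I$, (ii) deduces the value $(-1)^s I$ on tileable regions by inductively adjoining tiles, and (iii) states that the converse (non-tileable $\Rightarrow$ contribution $0$) is the missing step, expected to go through the Conway--Lagarias tiling groups. Your proposal follows essentially the same route and is honest about the same gap, so it neither over- nor under-claims. Two of your refinements are genuine improvements on the paper's sketch: the flat-connection/$2$-chain formulation correctly handles \emph{signed} tilings (the paper's ``inductively add tiles'' argument, as written, only covers honest tilings, and signed tilings require exactly the cancellation-of-boundaries argument you give, made legitimate by the centrality of the tile monodromies); and the observation that $n=3$ and $n=6$ both reduce to $u=a^2$ a primitive cube root of unity, with the target central and hence insensitive to the residual sign and to conjugation, cleanly explains why the two cases are treated together. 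Your reduction of the converse to showing that every non-tileable boundary class maps to a traceless (order-two in $\mathrm{PSL}_2$) element is a reasonable sharpening of the paper's vaguer ``compute the character of the tiling group,'' but it is still only a program: nothing in your write-up establishes that the image group detects exactly the signed-tiling obstruction and nothing else, and that is precisely why the statement remains a conjecture.
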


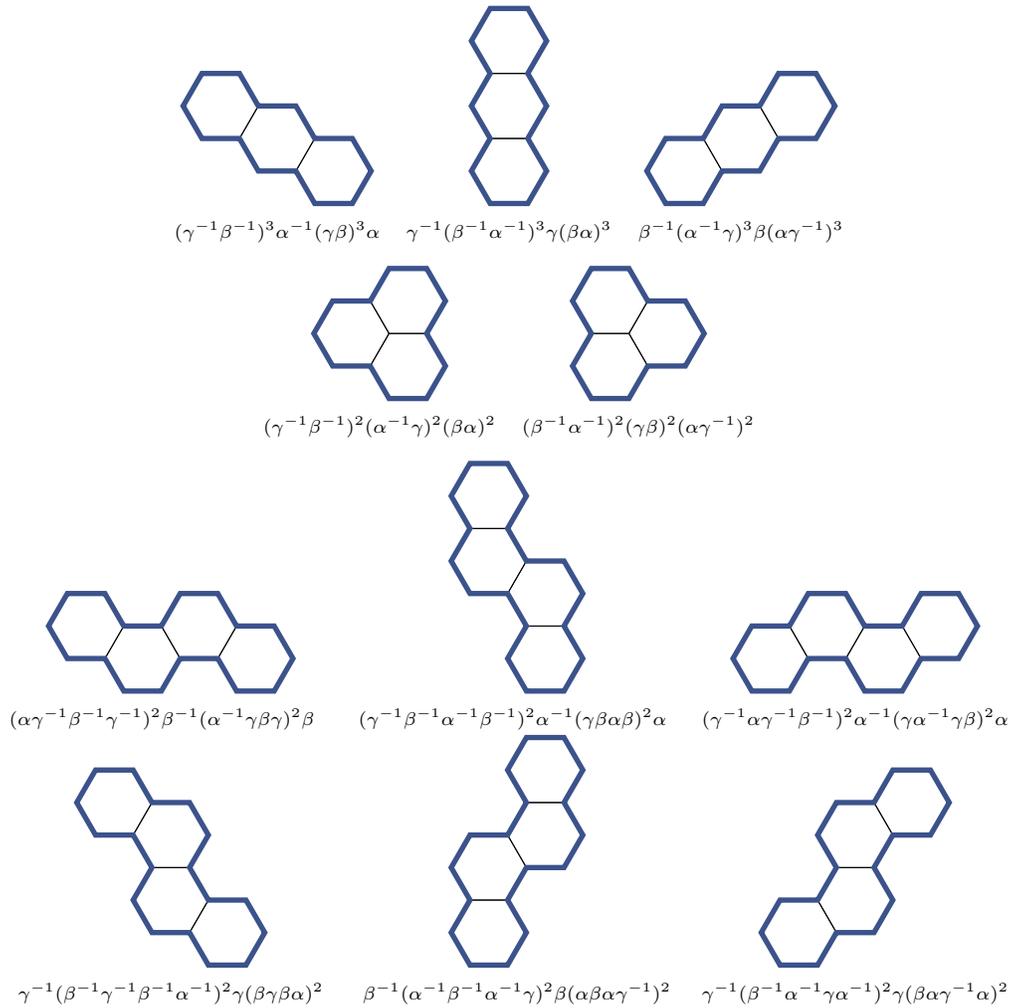
\begin{figure}[h]\small
	$$\begin{array}{c c c}
		\begin{tikzpicture}[scale = 0.25]
			\def \hex {+(0, 0) -- +(2, 0) -- +(3, 1*1.73) -- +(2, 2*1.73) -- +(0, 2*1.73) -- +(-1, 1*1.73) -- +(0, 0)};
			\draw[very thin] (0, 0)     \hex;
			\draw[very thin] (-3, 1*1.73) \hex;
			\draw[very thin] (-6, 2*1.73) \hex;
			\draw[color = blue, line width = 2pt] (0, 0) -- ++(2, 0) -- ++(1, 1.73) -- ++(-1, 1.73) -- ++(-2, 0) -- ++(-1, 1.73) -- ++(-2, 0) -- ++(-1, 1.73) -- ++(-2, 0) -- ++(-1, -1.73) -- ++(1, -1.73) -- ++(2, 0) -- ++(1, -1.73) -- ++(2, 0) -- ++(1, -1.73) -- cycle;
		\end{tikzpicture}
	&
		\begin{tikzpicture}[scale = 0.25]
			\def \hex {+(0, 0) -- +(2, 0) -- +(3, 1*1.73) -- +(2, 2*1.73) -- +(0, 2*1.73) -- +(-1, 1*1.73) -- +(0, 0)};
			\draw[very thin] (0, 0)     \hex;
			\draw[very thin] (0, 2*1.73) \hex;
			\draw[very thin] (0, 4*1.73) \hex;
			\draw[color = blue, line width = 2pt] (0, 0) -- ++(2, 0) -- ++(1, 1.73) -- ++(-1, 1.73) -- ++(1, 1.73) -- ++(-1, 1.73) -- ++(1, 1.73) -- ++(-1, 1.73) -- ++(-2, 0) -- ++(-1, -1.73) -- ++(1, -1.73) -- ++(-1, -1.73) -- ++(1, -1.73) -- ++(-1, -1.73) -- cycle;
		\end{tikzpicture}
	&	
		\begin{tikzpicture}[scale = 0.25]
			\def \hex {+(0, 0) -- +(2, 0) -- +(3, 1*1.73) -- +(2, 2*1.73) -- +(0, 2*1.73) -- +(-1, 1*1.73) -- +(0, 0)};
			\draw[very thin] (-2, 0)     \hex;
			\draw[very thin] (1, 1*1.73) \hex;
			\draw[very thin] (4, 2*1.73) \hex;
			\draw[color = blue, line width = 2pt] (0, 0) -- ++(-2, 0) -- ++(-1, 1.73) -- ++(1, 1.73) -- ++(2, 0) -- ++(1, 1.73) -- ++(2, 0) -- ++(1, 1.73) -- ++(2, 0) -- ++(1, -1.73) -- ++(-1, -1.73) -- ++(-2, 0) -- ++(-1, -1.73) -- ++(-2, 0) -- ++(-1, -1.73) -- cycle;
		\end{tikzpicture}
	\\
		\scriptstyle(\gamma^{-1}\beta^{-1})^3\alpha^{-1}(\gamma\beta)^3 \alpha 
	&
		\scriptstyle\gamma^{-1}(\beta^{-1}\alpha^{-1})^3\gamma(\beta\alpha)^3 
	& 
		\scriptstyle\beta^{-1}(\alpha^{-1}\gamma)^3 \beta (\alpha\gamma^{-1})^3
	\end{array}$$
	$$\begin{array}{c c}
		\begin{tikzpicture}[scale = 0.25]
			\def \hex {+(0, 0) -- +(2, 0) -- +(3, 1*1.73) -- +(2, 2*1.73) -- +(0, 2*1.73) -- +(-1, 1*1.73) -- +(0, 0)};
			\draw[very thin] (0, 0)     \hex;
			\draw[very thin] (3, -1*1.73) \hex;
			\draw[very thin] (3, 1*1.73) \hex;
			\draw[color = blue, line width = 2pt] (0, 0) -- ++(2, 0) -- ++(1, -1.73) -- ++(2, 0) -- ++(1, 1.73) -- ++(-1, 1.73) -- ++(1, 1.73) -- ++(-1, 1.73) -- ++(-2, 0) -- ++(-1, -1.73) -- ++(-2, 0) -- ++(-1, -1.73) -- ++(1, -1.73) -- cycle;
		\end{tikzpicture}
	&
		\begin{tikzpicture}[scale = 0.25]
			\def \hex {+(0, 0) -- +(2, 0) -- +(3, 1*1.73) -- +(2, 2*1.73) -- +(0, 2*1.73) -- +(-1, 1*1.73) -- +(0, 0)};
			\draw[very thin] (-2, 0)     \hex;
			\draw[very thin] (-5, -1*1.73) \hex;
			\draw[very thin] (-5, 1*1.73) \hex;
			\draw[color = blue, line width = 2pt] (0, 0) -- ++(-2, 0) -- ++(-1, -1.73) -- ++(-2, 0) -- ++(-1, 1.73) -- ++(1, 1.73) -- ++(-1, 1.73) -- ++(1, 1.73) -- ++(2, 0) -- ++(1, -1.73) -- ++(2, 0) -- ++(1, -1.73) -- ++(-1, -1.73) -- cycle;
		\end{tikzpicture}
	\\
		\scriptstyle(\gamma^{-1}\beta^{-1})^2(\alpha^{-1}\gamma)^2(\beta\alpha)^2 
	&
		\scriptstyle(\beta^{-1}\alpha^{-1})^2(\gamma\beta)^2(\alpha\gamma^{-1})^2
	\end{array}$$
	$$\begin{array}{c c c}
		\begin{tikzpicture}[scale = 0.25]
			\def \hex {+(0, 0) -- +(2, 0) -- +(3, 1*1.73) -- +(2, 2*1.73) -- +(0, 2*1.73) -- +(-1, 1*1.73) -- +(0, 0)};
			\draw[very thin] (0, -2*1.73) \hex;
			\draw[very thin] (3, -3*1.73) \hex;
			\draw[very thin] (6, -2*1.73) \hex;
			\draw[very thin] (9, -3*1.73) \hex;
			\draw[color = blue, line width = 2pt] (0, 0) -- ++(2, 0) -- ++(1, -1.73) -- ++(2, 0) -- ++(1, 1.73) -- ++(2, 0) -- ++(1, -1.73) -- ++(2, 0) -- ++(1, -1.73)  -- ++(-1, -1.73) -- ++(-2, 0) -- ++(-1, 1.73) -- ++(-2, 0) -- ++(-1, -1.73) -- ++(-2, 0) -- ++(-1, 1.73) -- ++(-2, 0) -- ++(-1, 1.73) -- cycle;
		\end{tikzpicture}
	&
		\begin{tikzpicture}[scale = 0.25]
			\def \hex {+(0, 0) -- +(2, 0) -- +(3, 1*1.73) -- +(2, 2*1.73) -- +(0, 2*1.73) -- +(-1, 1*1.73) -- +(0, 0)};
			\draw[very thin] (-2, 0) \hex;
			\draw[very thin] (-2, 2*1.73) \hex;
			\draw[very thin] (-5, 3*1.73) \hex;
			\draw[very thin] (-5, 5*1.73) \hex;
			\draw[color = blue, line width = 2pt] (0, 0) -- ++(-2, 0) -- ++(-1, 1.73) -- ++(1, 1.73) -- ++(-1, 1.73) -- ++(-2, 0) -- ++(-1, 1.73) -- ++(1, 1.73) -- ++ (-1, 1.73) -- ++(1, 1.73) -- ++(2, 0) -- ++(1, -1.73) -- ++(-1, -1.73) -- ++(1, -1.73) -- ++(2, 0) -- ++(1, -1.73) -- ++(-1, -1.73) -- ++(1, -1.73) -- cycle;
			\end{tikzpicture}
	&
		\begin{tikzpicture}[scale = 0.25]
			\def \hex {+(0, 0) -- +(2, 0) -- +(3, 1*1.73) -- +(2, 2*1.73) -- +(0, 2*1.73) -- +(-1, 1*1.73) -- +(0, 0)};
			\draw[very thin] (0, 0) \hex;
			\draw[very thin] (3, 1*1.73) \hex;
			\draw[very thin] (6, 0*1.73) \hex;
			\draw[very thin] (9, 1*1.73) \hex;
			\draw[color = blue, line width = 2pt] (0, 0) -- ++(2, 0) -- ++(1, 1.73) -- ++(2, 0) -- ++(1, -1.73) -- ++(2, 0) -- ++(1, 1.73) -- ++(2, 0) -- ++(1, 1.73)  -- ++(-1, 1.73) -- ++(-2, 0) -- ++(-1, -1.73) -- ++(-2, 0) -- ++(-1, 1.73) -- ++(-2, 0) -- ++(-1, -1.73) -- ++(-2, 0) -- ++(-1, -1.73) -- cycle;
		\end{tikzpicture}
	\\
		\scriptstyle(\alpha\gamma^{-1}\beta^{-1}\gamma^{-1})^2\beta^{-1}(\alpha^{-1}\gamma\beta\gamma)^2\beta\ \  
	&
		 \scriptstyle(\gamma^{-1}\beta^{-1}\alpha^{-1}\beta^{-1})^2\alpha^{-1}(\gamma\beta\alpha\beta)^2\alpha\ 
 	&
		\scriptstyle(\gamma^{-1}\alpha\gamma^{-1}\beta^{-1})^2\alpha^{-1}(\gamma\alpha^{-1}\gamma\beta)^2\alpha 
	\\
		\begin{tikzpicture}[scale = 0.25]
			\def \hex {+(0, 0) -- +(2, 0) -- +(3, 1*1.73) -- +(2, 2*1.73) -- +(0, 2*1.73) -- +(-1, 1*1.73) -- +(0, 0)};
			\draw[very thin] (0, 0) \hex;
			\draw[very thin] (-3, 1*1.73) \hex;
			\draw[very thin] (-3, 3*1.73) \hex;
			\draw[very thin] (-6, 4*1.73) \hex;
			\draw[color = blue, line width = 2pt] (0, 0) -- ++(2, 0) -- ++(1, 1.73) -- ++(-1, 1.73) -- ++(-2, 0) -- ++(-1, 1.73) -- ++(1, 1.73) -- ++(-1, 1.73) -- ++(-2, 0) -- ++(-1, 1.73) -- ++(-2, 0) -- ++(-1, -1.73) -- ++(1, -1.73) -- ++(2, 0) -- ++(1, -1.73) -- ++(-1, -1.73) -- ++(1, -1.73) -- ++(2, 0) -- cycle;
			\end{tikzpicture}
	&
		\begin{tikzpicture}[scale = 0.25]
		\def \hex {+(0, 0) -- +(2, 0) -- +(3, 1*1.73) -- +(2, 2*1.73) -- +(0, 2*1.73) -- +(-1, 1*1.73) -- +(0, 0)};
		\draw[very thin] (0, 0) \hex;
		\draw[very thin] (0, 2*1.73) \hex;
		\draw[very thin] (3, 3*1.73) \hex;
		\draw[very thin] (3, 5*1.73) \hex;
		\draw[color = blue, line width = 2pt] (0, 0) -- ++(2, 0) -- ++(1, 1.73) -- ++(-1, 1.73) -- ++(1, 1.73) -- ++(2, 0) -- ++(1, 1.73) -- ++(-1, 1.73) -- ++ (1, 1.73) -- ++(-1, 1.73) -- ++(-2, 0) -- ++(-1, -1.73) -- ++(1, -1.73) -- ++(-1, -1.73) -- ++(-2, 0) -- ++(-1, -1.73) -- ++(1, -1.73) -- ++(-1, -1.73) -- cycle;
		\end{tikzpicture}
	&
		\begin{tikzpicture}[scale = 0.25]
			\def \hex {+(0, 0) -- +(2, 0) -- +(3, 1*1.73) -- +(2, 2*1.73) -- +(0, 2*1.73) -- +(-1, 1*1.73) -- +(0, 0)};
			\draw[very thin] (-2, 0) \hex;
			\draw[very thin] (1, 1*1.73) \hex;
			\draw[very thin] (1, 3*1.73) \hex;
			\draw[very thin] (4, 4*1.73) \hex;
			\draw[color = blue, line width = 2pt] (0, 0) -- ++(-2, 0) -- ++(-1, 1.73) -- ++(1, 1.73) -- ++(2, 0) -- ++(1, 1.73) -- ++(-1, 1.73) -- ++(1, 1.73) -- ++(2, 0) -- ++(1, 1.73) -- ++(2, 0) -- ++(1, -1.73) -- ++(-1, -1.73) -- ++(-2, 0) -- ++(-1, -1.73) -- ++(1, -1.73) -- ++(-1, -1.73) -- ++(-2, 0) -- cycle;
			\end{tikzpicture}
	\\
		\scriptstyle\gamma^{-1}(\beta^{-1}\gamma^{-1}\beta^{-1}\alpha^{-1})^2\gamma(\beta\gamma\beta\alpha)^2 
	&
		\scriptstyle\beta^{-1}(\alpha^{-1}\beta^{-1}\alpha^{-1}\gamma)^2\beta(\alpha\beta\alpha\gamma^{-1})^2 
	& 
		\scriptstyle\gamma^{-1}(\beta^{-1}\alpha^{-1}\gamma\alpha^{-1})^2\gamma(\beta\alpha\gamma^{-1}\alpha)^2
		
	\end{array}$$\normalsize
	\caption{All of the tiles we use for $\omega$ a third or sixth root of unity. The top row is all three orientation of bones, the second row both orientation of stones, and the bottom two rows all six orientations of the snake tile.}
	\label{alltiles}
\end{figure}

When $n = 3$ or $n = 6$, then the contribution of a loop around a bone or snake is $I$. So if we inductively add tiles to a region (as we did in the proof of Theorem \ref{eighthroot} for individual hexagons), we get $\pm I$, where the parity depends only on the number of stone tiles used in the tiling.  So to prove the conjecture, one needs only show that the monodromy of all non-signed-tilable $L$ is 0.  

We expect that after this specialization our $\SL_2$ connection is strongly related to the character of one of the Conway-Lagarias tiling groups, so the proof will involve computing this character, as well as a map from said tiling group to a subgroup generated by the matrices $\alpha$, $\beta$, and $\gamma$.  Note that a contribution of 0 means that a double dimer configuration will not contribute at all to the partition function, so this should give an interesting generating function for ``good pairs" of plane partitions.

\section{Concluding remarks}

\begin{figure}
	$$\begin{tikzpicture}
		\clip[draw] (0.25,0) circle (3.5);
		\node at (0,0) {\includegraphics[width = .75\textwidth]{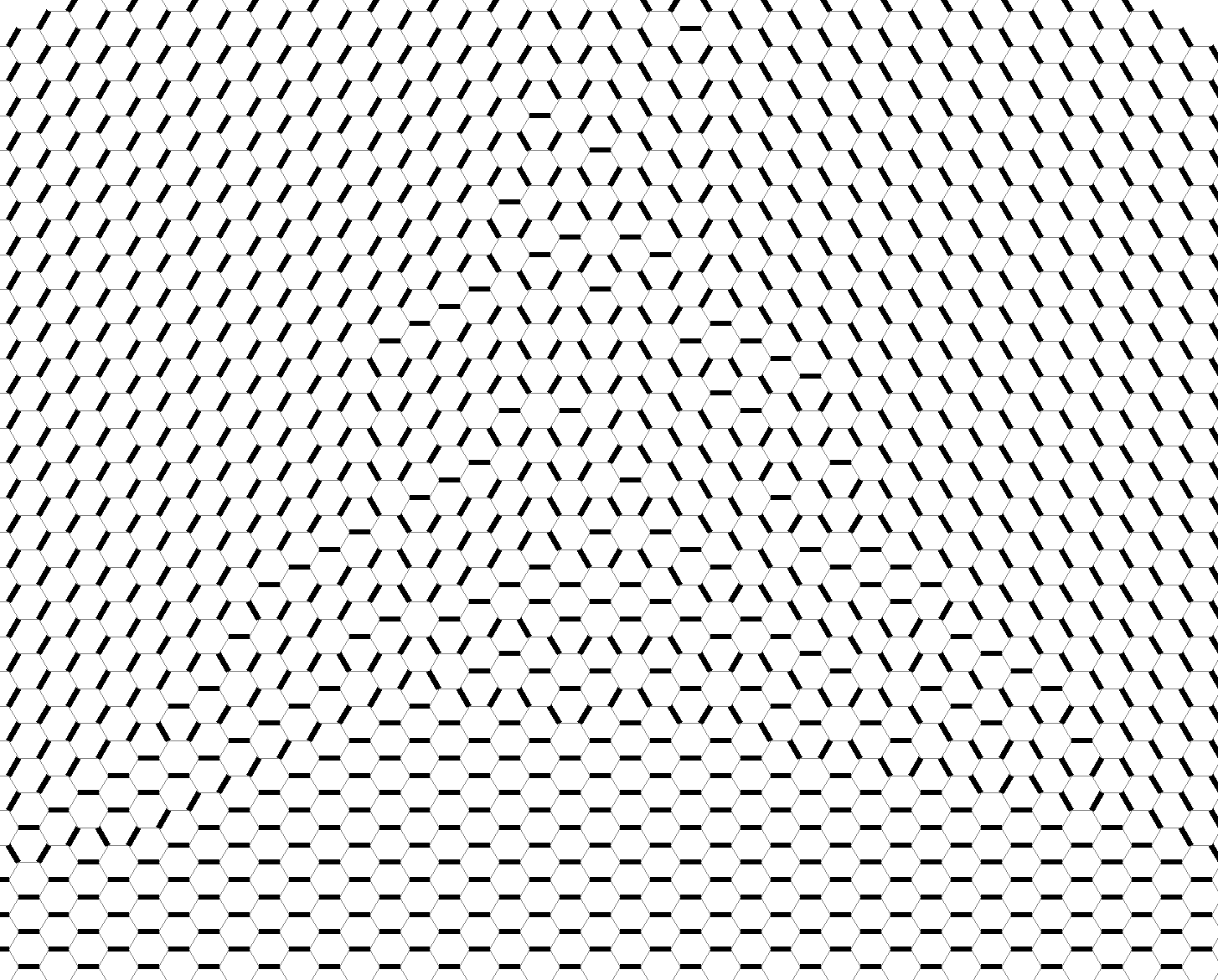}};
	\end{tikzpicture}
	\ \ 
	\begin{tikzpicture}
		\clip[draw] (0.25,.4) circle (3.5);
		\node at (0,0) {\includegraphics[width = .65\textwidth]{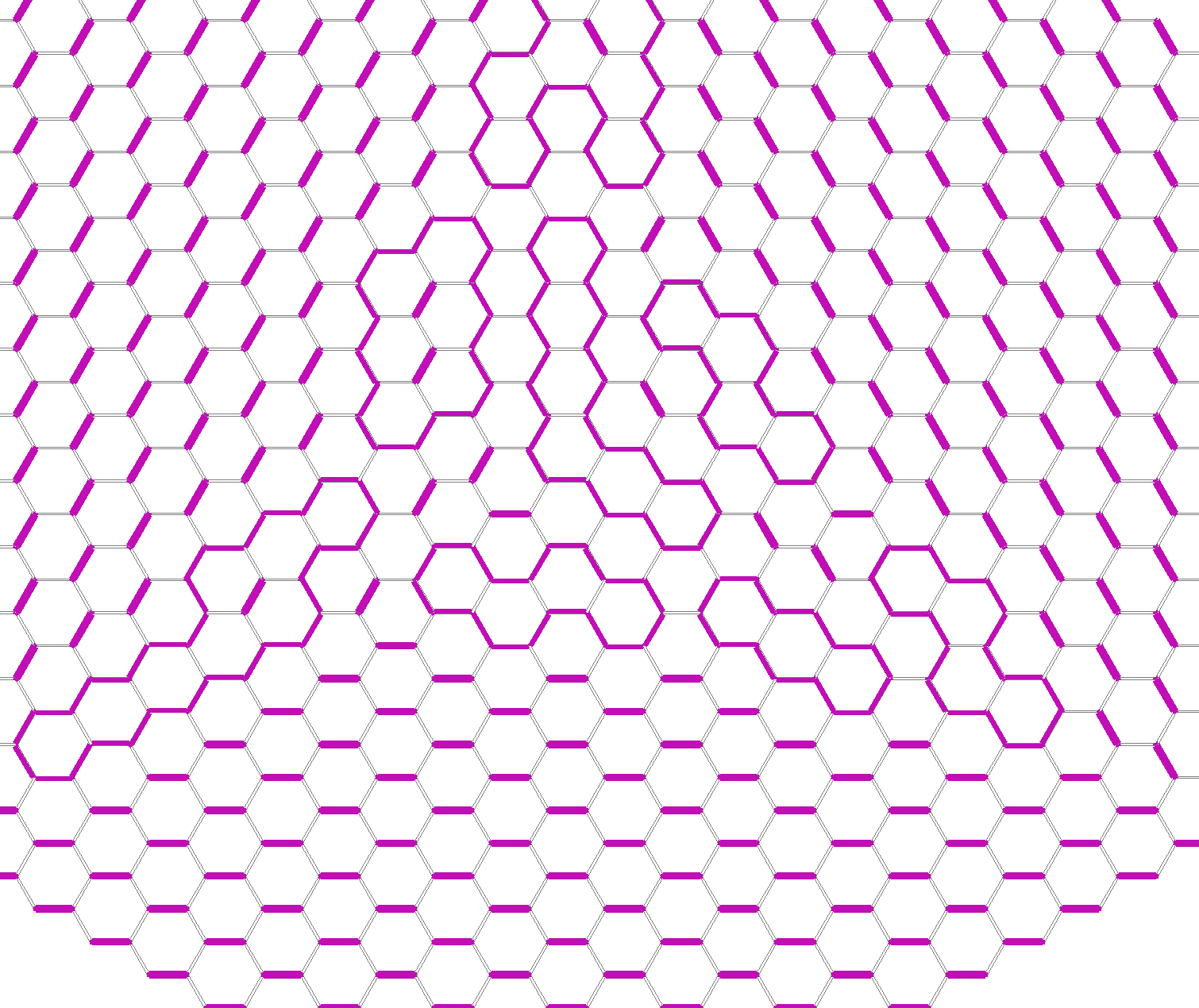}};
	\end{tikzpicture}$$
	\caption{The single dimer model before and after the squish map showing various closed loops, including two snakes, a bone, and some other loops with positive contributions under Conjecture~\ref{conj:stone bone snake}}
	\label{loops}
\end{figure}

	Here we explain the process of finding $\alpha$, $\beta$, and $\gamma$. Perhaps our discovery process is of interest to the reader that has made it this far.  
	
	Let $M$ (for move) be the matrix $M = L^{-1}R$. Then $M^{-1} = R^{-1}L$. Our first attempt to create matrices $\alpha$, $\beta$, and $\gamma$ involved simply conjugating $A,B,C$ with different powers of $M$, noting that $M^3 =-I$. Based on empirical studies of the shortest possible loop, six left turns around a single hexagon, this looked close to the correct definition.  We defined $\alpha_1 := A$, $\beta_1 := M^{-1}BM$, and $\gamma_1 := M^{-2}BM^2$. These matrices worked for our main goal -- terms in the trace of the resulting matrix count how many perfect matchings squish to a particular loop, with the correct weights. However, that polynomial had alternating signs depending on the total degree of each term.
	
	Our next step was to correct for the signs by replacing $a$ with $ai$, $b$ with $bi$, and $c$ with $ci$ in the original $A$, $B$ and $C$ matrices. Let $A'$, $B'$, and $C'$ be the corresponding $A$, $B$, and $C$ matrices after making the above replacements. We wanted all of the signs to be positive because the matrix product of a path around a loop should give rise to a probability measure on the graph, as in \cite{kenyon:conformal}. After including the $i$s we had a trace that has all positive values. Then, $\alpha_2 := A'$, $\beta_2 := M^{-1}B'M$, and $\gamma_2 := M^{-2}C'M^2$. 
	
	We had one final problem, however: we needed to accomplish the specialization $a \mapsto ai$, \emph{etc}, using linear algebra.  We did this using the matrix $J$ as defined above. We can now rewrite $A' = iJA$, $B' = -iJB$, and $C' = iJC$.  Finally, to correct for the signs we mentioned, we conjugate each $\alpha_2$, $\beta_2$, and $\gamma_2$ with $J$. So (finally) we have that $\alpha_3 := JA'J$, $\beta_3 := JM^{-1}B'MJ$, and $\gamma_3 := JM^{-2}C'M^2J$. After rewriting and simplifying, we arrive at the final definition of $\alpha$, $\beta$, and $\gamma$ as stated in Definition \ref{matrices}. 
	
\subsection*{Acknowledgements}

We would like to thank Richard Kenyon and Sunil Chhita for helpful conversations. Leigh Foster received support from NSF grant DMS-2039316.

%\bibliographystyle{plain}
%\printbibliography
%\bibliography{bib}	

\begin{thebibliography}{1}

\bibitem{conway-lagarias}
J.~H. Conway and J.~C. Lagarias.
\newblock Tiling with polyominoes and combinatorial group theory.
\newblock {\em J. Combin. Theory Ser. A}, 53(2):183--208, 1990.

\bibitem{gessel-viennot}
Ira Gessel and G\'{e}rard Viennot.
\newblock Binomial determinants, paths, and hook length formulae.
\newblock {\em Adv. in Math.}, 58(3):300--321, 1985.

\bibitem{kasteleyn:statistics}
PW~Kasteleyn.
\newblock The statistics of dimers on a lattice.
\newblock {\em Physica}, 27:1209--1225, 1961.

\bibitem{kenyon:conformal}
Richard Kenyon.
\newblock Conformal invariance of loops in the double-dimer model.
\newblock {\em Comm. Math. Phys.}, 326(2):477--497, 2014.

\bibitem{lindstrom}
Bernt Lindstr\"{o}m.
\newblock On the vector representations of induced matroids.
\newblock {\em Bull. London Math. Soc.}, 5:85--90, 1973.

\bibitem{macmahon}
Percy~A. MacMahon.
\newblock {\em Combinatory analysis. {V}ol. {I}, {II} (bound in one volume)}.
\newblock Dover Phoenix Editions. Dover Publications Inc., Mineola, NY, 2004.

\bibitem{temperley-fisher}
H.~N.~V. Temperley and Michael~E. Fisher.
\newblock Dimer problem in statistical mechanics---an exact result.
\newblock {\em Philos. Mag. (8)}, 6:1061--1063, 1961.

\bibitem{young:squish}
Benjamin Young.
\newblock Squishing dimers on the hexagon lattice.
\newblock {\em Electron. J. Combin.}, 16(1):Research Paper 86, 20, 2009.

\bibitem{young:thesis}
Benjamin Young.
\newblock Generating functions for colored 3{D} {Y}oung diagrams and the
  {D}onaldson-{T}homas invariants of orbifolds.
\newblock {\em Duke Math. J.}, 152(1):115--153, 2010.
\newblock With an appendix by Jim Bryan.

\end{thebibliography}

\end{document}